\newtheorem {theorem}{Theorem}
\newtheorem {corollary}[theorem]{Corollary}
\newtheorem {lemma}[theorem]{Lemma}
\newtheorem {proposition}[theorem]{Proposition}
\theoremstyle{definition}
\newtheorem {remark}[theorem]{Remark}
\newtheorem {definition}[theorem]{Definition}
\numberwithin{theorem}{section}
\DeclareMathOperator{\Aut}{Aut}
\DeclareMathOperator{\Gal}{Gal}
\DeclareMathOperator{\Hom}{Hom}
\DeclareMathOperator{\Mat}{Mat}
\DeclareMathOperator{\GL}{GL}
\DeclareMathOperator{\SL}{SL}
\DeclareMathOperator{\PSL}{PSL}
\DeclareMathOperator{\PGL}{PGL}
\DeclareMathOperator{\lcm}{lcm}
\DeclareMathOperator{\Id}{Id}
\DeclareMathOperator{\tors}{tors}
\DeclareMathOperator{\ab}{ab}
\DeclareMathOperator{\tr}{tr}
\newcommand{\Q}{\mathbb{Q}}
\newcommand{\Z}{\mathbb{Z}}
\newcommand{\F}{\mathbb{F}}
\newcommand{\matr}[4]{\left(\begin{array}{cc}#1 & #2 \\ #3 & #4\end{array}\right)}
\newcommand{\set}[1]{\left\{ #1 \right\}}
\renewcommand{\geq}{\geqslant}
\renewcommand{\leq}{\leqslant}
\begin{document}

\title{Some uniform bounds for elliptic curves over $\mathbb{Q}$}

\author{Davide Lombardo}
\address[]{Dipartimento di Matematica, Universit\`a di Pisa,
Largo Bruno Pontecorvo 5, 56127 Pisa, Italy}
\email{davide.lombardo@unipi.it}

\author{Sebastiano Tronto}
\address[]{Department of Mathematics, University of Luxembourg,
6 av.\@ de la Fonte, 4364 Esch-sur-Alzette, Luxembourg}
\email{sebastiano.tronto@uni.lu}

\begin{abstract}
We give explicit uniform bounds for several quantities relevant to the study of Galois representations attached to elliptic curves $E/\Q$. We consider in particular the subgroup of scalars in the image of Galois, the first Galois cohomology group with values in the torsion of $E$, and the Kummer extensions generated by points of infinite order in $E(\Q)$.
\end{abstract}

\maketitle

\section{Introduction}
Let $E/\Q$ be an elliptic curve. Our purpose in this paper is to provide universal bounds on several arithmetically relevant quantities attached to $E$, and more precisely to its Galois representations. For each prime $\ell$ we denote by $G_{\ell^\infty}$ the image of the $\ell$-adic Galois representation attached to $E/\Q$, and by $G_\infty$ the image of the adelic representation (see Section \ref{subsect:NotationTorsion} for details). We provide in particular:
\begin{enumerate}
\item a uniform upper bound for the index $[\Z_\ell^\times : \Z_\ell^\times \cap G_{\ell^\infty}]$ (Theorem \ref{thm:ScalarsImageGalois}), that is, we show that for every prime $\ell$ the subgroup of scalars in the $\ell$-adic image of Galois contains a fixed subgroup of $\Z_\ell^\times$ for all elliptic curves $E/\Q$;
\item a uniform upper bound on the exponent of the cohomology groups $H^1(G_\infty, E[N])$, for all positive integers $N$ (Theorem \ref{theorem:boundCohomologyOverQ});
\item a uniform lower bound for the closed $\Z_\ell$-subalgebra $\Z_\ell[G_{\ell^\infty}]$ of $\Mat_{2 \times 2}(\Z_\ell)$ generated by $G_{\ell^\infty} \subseteq \GL_2(\Z_\ell) \subset \Mat_{2 \times 2}(\Z_\ell)$: for each prime $\ell$ we compute an optimal exponent $m_\ell$ such that $\Z_\ell[G_{\ell^\infty}]$ contains $\ell^{m_\ell} \Mat_{2 \times 2}(\Z_\ell)$ (Theorem \ref{theorem:algebraZG});
\item a uniform lower bound on the degrees of the relative `Kummer extensions' (Section \ref{sect:KummerDegrees}), that is, the extensions $\Q( \frac{1}{N}\alpha, E[N] ) / \Q( E[N] )$ obtained by adjoining all $N$-torsion points of $E$ and all $N$-division points of a fixed rational point $\alpha \in E(\Q)$ (Theorem \ref{theorem:mainTheoremKummer}), provided that $\alpha$ and all its translates by torsion points are not divisible by any $d>1$ in the group $E(\Q)$.
\end{enumerate}

We now elaborate on each of these four topics. It is well-known that, for a fixed prime $\ell$ and number field $K$, the images of the $\ell$-adic Galois representations attached to non-CM elliptic curves over $K$ admit a uniform upper bound for the index $[\GL_2(\Z_\ell) : G_{\ell^\infty}]$ (see for example \cite{MR2434156}). Since the CM case is easy to handle, this implies the existence of a bound as in (1). However, the result of \cite{MR2434156} is not effective, and a great deal of work has gone into classifying the possible $\ell$-adic images of Galois even just for elliptic curves over $\Q$ (the so-called `Program B' of Mazur), see for example \cite{MR488287, MR3500996, PossibleImages, MR2753610, PedroSamuel, Isogeny7, Isogeny}. Our results on (1), which rely heavily on many of these previous developments, give a complete answer for all primes $\ell \neq 3$, and a rather sharp bound also for the remaining case $\ell=3$. With the exception of the case $\ell=2$, that was already treated in \cite{MR3500996}, we prove our estimates by group-theoretic means (see in particular the criteria given by Corollary \ref{cor:AllScalars} and Proposition \ref{prop:padicscalars}). The advantage of such an approach is that our methods can easily be extended to number fields other than $\Q$. The price to pay is that we don't get the sharpest possible result for $\ell=3$, a direction we have decided not to pursue further also due to upcoming work of Rouse, Sutherland and Zureick-Brown on the complete classification of $3$-adic images of Galois for elliptic curves over $\Q$ with a rational $3$-isogeny (see also Remark \ref{rem:RouseExponent3}).

Concerning (2), there is already a significant past literature on controlling the cohomology groups $H^1(G_{\ell^\infty}, E[\ell^k])$, see for example \cite{LawsonWutrich}, \cite[Lemma 10]{MR286754} and \cite[Section 3]{MR1628193}. Kolyvagin's celebrated work on the Birch--Swinnerton-Dyer conjecture also needs to rely on vanishing statements for the Galois $H^1$ of the $\ell$-torsion of elliptic curves \cite[Proposition 9.1]{MR1110395}. In this paper we go beyond the known results in two different ways. On the one hand, we extend the statements in \cite{LawsonWutrich} by giving a uniform upper bound on the exponents of all the cohomology groups $H^1(G_{\ell^\infty}, E[\ell^k])$, where \cite{LawsonWutrich} mostly gave vanishing conditions and did not extensively treat the cases when the cohomology does not vanish. As we show in Section \ref{sect:Examples}, these results for a fixed prime $\ell$ are rather sharp. Secondly, and more importantly for our application (4), we also treat the Galois action on the $N$-torsion of elliptic curves when $N$ is not necessarily a prime power. While the case $N=\ell^k$ follows easily from the existence of non-trivial scalars in the image of Galois, the general case introduces a number of additional complications, connected with the possible `entanglement' of torsion fields at different primes. Since not even the classification of possible $\ell$-adic images is complete, the problem of describing all possible entanglements between torsion fields seems to be out of reach for the moment (but see \cite{MR3957898}, \cite[\S 3]{campagna2019cyclic}, \cite{campagna2020entanglement} and \cite{daniels2021classification} for some positive results), 
so the computation of $H^1(G_\infty, E[N])$ cannot be approached directly.
We are still able to obtain useful information on this group (in particular, prove Theorem \ref{theorem:boundCohomologyOverQ}) by using the inflation-restriction exact sequence and controlling the amount of entanglement by using our results on scalars and the uniform bound on the degrees of prime-degree isogenies (Mazur's theorem). As in the case of (1), the intermediate technical results on the way to the proof of Theorem \ref{theorem:boundCohomologyOverQ} should hopefully apply in more general situations (see in particular Proposition \ref{proposition:cohomologyBoundGeneric}). Our numerical estimate on the exponent of $H^1(G_\infty, E[N])$ is nowhere near as sharp as the corresponding bounds for the special case $N=\ell^k$, but notice that (unlike that case) it is not a priori clear that a uniform bound should even exist. We had in fact already shown the existence of such a bound in \cite{2019arXiv190905376L}, but the result was not effective.

We remark that we have chosen to formulate our bounds in terms of divisibility: we prove that multiplication by a suitable universal constant $e$ kills the abelian group $H^1(G_\infty, E[N])$, and therefore the exponent of this group divides $e$. The numerical constant would be much smaller if we instead formulated the result as an inequality (that is, if we were content with knowing that the exponent of $H^1(G_\infty, E[N])$ does not exceed a certain constant $e'$), but we feel that our version will be more useful in applications.
In particular, we would like to stress that -- even ignoring the non-effective parts of the argument -- the ideas of \cite{2019arXiv190905376L} would lead to a (divisibility) bound for $H^1(G_\infty, E[N])$ involving primes up to several millions, while the value of $e$ that we find with the new, more streamlined proof given in the present paper is only divisible by the primes up to $11$ (which, as we show in Section \ref{sect:Examples}, all need to appear as factors of $e$). In other words, while our constant $e$ is probably not optimal, it is at least supported on the correct set of primes.

The algebra $\Z_\ell[G_{\ell^\infty}]$ considered in (3) is also a classical object in the field of Galois representations, and its analogues in arbitrary dimension most famously play an important role in Faltings's proof of his finiteness theorems for abelian varieties. While in many applications one needs control over the actual image of Galois $G_{\ell^\infty}$, in several cases it is enough to get a handle on the sub-algebra of $\Mat_{2 \times 2}(\Z_\ell)$ generated by it. In the hope that it will be useful in such cases, we give explicit values $m_\ell$ with the property that $\ell^{m_\ell} \Mat_{2 \times 2}(\Z_\ell)$ is contained in $\Z_\ell[G_{\ell^\infty}]$ for all elliptic curves $E/\Q$, and we show that these values are optimal.

Finally, (4) was our original motivation for the work done in this paper: we had already shown a similar result in \cite{2019arXiv190905376L}, but (lacking all the previous information (1), (2), (3)) we could not make it explicit, or in fact even effective. With all the preliminary work done in \cite{2019arXiv190905376L} and in the other sections of this paper, the desired result on Kummer extensions is now easy to prove.
Notice that the assumption on the (in)divisibility of the point $\alpha$ is necessary: if $\alpha=N\beta$ for some rational point $\beta$ then $\Q(\frac1N\alpha, E[N])$ coincides with the torsion field $\Q(E[N])$, and clearly no non-trivial lower bound for $[ \mathbb{Q}(\frac{1}{N}\alpha, E[N]) : \mathbb{Q}(E[N]) ]$ exists in this case. On the other hand, it is possible to relax this assumption if one is willing to accept a bound that depends on the largest integer $d$ such that $\alpha$ is $d$-divisible in $E(\Q)/E(\Q)_{\tors}$, but not on the curve $E$, see \cite[Remark 7.2]{2019arXiv190905376L}.

We make two final comments. In order to get completely uniform results, we also need to treat the case of CM elliptic curves: while the proofs are generally easier than their non-CM counterparts, they are genuinely different and require some additional observations. In several cases we also prove sharper results in this context (see in particular Theorem \ref{thm:CMH1Bound} for a bound on the cohomology groups attached to CM elliptic curves over number fields). For this reason, while it is clear that one can obtain uniform statements that do not distinguish between CM and non-CM curves (essentially, by taking the maximum of the bounds in the two cases), we have chosen to formulate most of our results with a clear distinction between the two situations.

Finally, we would like to point out that much of what we do in this paper can be extended to number fields $K$ having at least one real place, at least if one is ready to believe the Generalised Riemann Hypothesis. 
Indeed, under GRH, the uniform boundedness of isogenies of elliptic curves over $K$ holds by \cite[Corollary 6.5]{MR3211798}. Concerning the four topics above, we have already pointed out that (1) is known to be true for all number fields, and the group-theoretic criteria of Propositions \ref{proposition:containsAllScalars} and \ref{prop:padicscalars} can in most cases make this explicit (in terms of a bound on the possible degrees of cyclic isogenies). As for (2), the proof of Theorem \ref{theorem:boundCohomologyOverQ} can be repeated almost verbatim once one knows that the subgroup of scalars in $G_{\ell^\infty}$ is uniformly lower-bounded for all $\ell$ and that the degrees of cyclic isogenies are also bounded. A bound as in (3) follows from Proposition \ref{proposition:algebraReducibleGeneral}, Proposition \ref{prop:Algebra2General} and Corollary \ref{corollary:algebraIrreducible}. Finally, by the results of \cite{2019arXiv190905376L} a bound as in (4) can be obtained as a consequence of all the above. We do not pursue this observation further since the result would in any case be conditional on GRH, but we hope to have convinced the reader that the techniques in this paper have wider applicability than just the case of rational numbers.

\subsection{Structure of the paper}
In Section \ref{sect:Preliminaries} we recall some basic properties of $\ell$-adic numbers and of subgroups of $\GL_2(\F_\ell)$ for $\ell$ a prime number. We also introduce our notation for the Galois representations attached to elliptic curves. In Section \ref{section:scalarsInImage} we prove our first main results, Theorems \ref{thm:ScalarsImageGalois} and Proposition \ref{prop:ScalarsInImageCM}, which give a uniform lower bound for the subgroup of scalars in the image of Galois representations attached to elliptic curves over $\mathbb{Q}$ (in the non-CM and CM case respectively). In Section \ref{sect:CohomologyBound} we deduce from this an estimate on the exponent of the first cohomology group for the action of Galois on the torsion points of an elliptic curve $E/\Q$, see Theorem \ref{theorem:boundCohomologyOverQ} and Theorem \ref{thm:CMH1Bound} (which covers the CM case for elliptic curves over arbitrary number fields). In Section \ref{sect:GaloisImageAlgebra} we describe the $\Z_\ell$-subalgebra of $\operatorname{End}(\Z_\ell^2)$ generated by the image of an $\ell$-adic Galois representation attached to an elliptic curve over $\Q$. Finally, in Section \ref{sect:KummerDegrees} we combine the previous results to study the Kummer theory of elliptic curves over $\Q$, leading to a uniform estimate on the degrees of Kummer extensions (Theorem \ref{theorem:mainTheoremKummer}). Section \ref{sect:Examples} gives some explicit examples showing that at least some of our estimates are not too far from optimal. The group-theoretic Appendix \ref{sect:pAdicAppendix} contains the proof of an auxiliary result needed in Section \ref{section:scalarsInImage} to study the case of $3$-adic Galois representations.

\subsection{Acknowledgements}
We thank Peter Bruin for providing us with a reference for Lemma \ref{lemma:AbsIrred}, and Andrea Maffei for a useful discussion on reductive groups.
We also thank Jeremy Rouse and Michael Cerchia for fruitful discussions, for informing us of their work in progress, and
for suggesting some improvements to our results.

\section{Preliminaries}\label{sect:Preliminaries}

\subsection{\texorpdfstring{$\ell$}{l}-adic numbers}

For every prime $\ell$ we denote by $\Z_\ell$ the ring of $\ell$-adic integers,
which we regard as a profinite (topological) ring, and by $v_\ell$ the
$\ell$-adic valuation on $\Z_\ell$. We denote by $\Z_\ell^+$ the underlying
abelian group of $\Z_\ell$, which is topologically generated by any element
of $\ell$-adic valuation $0$, and by $\Z_\ell^\times$ its group of units.
For $n\geq 1$ we let $1+\ell^n\Z_\ell=\set{x\in\Z_\ell\mid v_\ell(x)\geq n}$. Since the subgroup $\ell^n \Z_\ell$ of $\Z_\ell^+$ is topologically generated by any element of valuation $n$, from \cite[Proposition 4.3.12]{MR2312337} one obtains:
\begin{lemma}
  \label{lemma:ScalarsValuationOne}
  Let $n$ be a positive integer and let $\ell>2$ be a prime. Let $G$ be a
  closed subgroup of $\Z_\ell^\times$. If there is $\lambda\in G$ such that
  $v_\ell(\lambda-1)=n$, then $G$ contains $1+\ell^n\Z_\ell$.
\end{lemma}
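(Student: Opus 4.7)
The plan is to transport the problem from the multiplicative to the additive setting via the $\ell$-adic logarithm, so that the cited Proposition 4.3.12 of \cite{MR2312337} applies directly. The hypothesis $\ell>2$ will enter precisely at the step where one needs $\log$ to preserve valuations.

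First, I would recall that for $\ell$ odd the $\ell$-adic logarithm
\[
  \log : 1+\ell\Z_\ell \longrightarrow \ell\Z_\ell, \qquad \log(1+x)=\sum_{k\geq 1}(-1)^{k+1}\frac{x^k}{k}
\]
is an isomorphism of topological groups that restricts, for every $n\geq 1$, to an isomorphism $1+\ell^n\Z_\ell \isomto \ell^n\Z_\ell$; moreover $v_\ell(\log(1+x))=v_\ell(x)$ for every $x\in\ell\Z_\ell$. The key point — and the only place where $\ell>2$ is used — is the estimate $v_\ell(x^k/k)=k\,v_\ell(x)-v_\ell(k)>v_\ell(x)$ for $k\geq 2$, which for odd $\ell$ follows from $(k-1)v_\ell(x)\geq k-1>v_\ell(k)$ as soon as $v_\ell(x)\geq 1$.

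Next, since $v_\ell(\lambda-1)=n\geq 1$, the element $\lambda$ lies in $1+\ell^n\Z_\ell$, and by the valuation-preserving property above its image $\log\lambda\in\ell^n\Z_\ell$ satisfies $v_\ell(\log\lambda)=n$. In other words, $\log\lambda$ is an element of $\ell^n\Z_\ell$ of minimal $\ell$-adic valuation, so by \cite[Proposition 4.3.12]{MR2312337} (applied to the profinite abelian group $\ell^n\Z_\ell^+$) the closed subgroup of $\ell^n\Z_\ell^+$ topologically generated by $\log\lambda$ is all of $\ell^n\Z_\ell$.

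Finally, I would pull this back through the log isomorphism: the closed subgroup of $1+\ell^n\Z_\ell$ topologically generated by $\lambda$ is all of $1+\ell^n\Z_\ell$. Since $G$ is a closed subgroup of $\Z_\ell^\times$ containing $\lambda$, it contains this closed subgroup, and hence $1+\ell^n\Z_\ell\subseteq G$, as desired. The only genuinely delicate point in the whole argument is the verification that $\log$ preserves valuations on $\ell\Z_\ell$ for $\ell$ odd; everything else is a formal consequence of that fact combined with the cited proposition.
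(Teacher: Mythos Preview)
Your proof is correct and follows essentially the same approach as the paper. The paper simply records the additive fact that $\ell^n\Z_\ell$ is topologically generated by any element of valuation $n$ and then invokes \cite[Proposition 4.3.12]{MR2312337}; you make the passage from the multiplicative group $1+\ell\Z_\ell$ to the additive group $\ell\Z_\ell$ explicit via the $\ell$-adic logarithm, which is exactly the isomorphism underlying the paper's one-line deduction.
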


There is group homomorphism $\F_\ell^\times\to\Z_\ell^\times$, the \emph{Teichm\"uller lift}, that sends every $\lambda\in \F_\ell^\times$ to the unique
$\tilde\lambda\in \Z_\ell^\times$ such that $\tilde\lambda^\ell=\tilde\lambda$
and $\tilde\lambda\equiv\lambda\pmod\ell$ (such a $\tilde{\lambda}$ exists by Hensel's
lemma). 
The following well-known lemma (see e.g.~\cite[Corollary 4.5.10]{gouvea1997p}) shows that $\Z_\ell^\times$ is generated
by $1+\ell \Z_\ell$ and by the Teichm\"uller lifts of
all elements of $\F_\ell^\times$, a fact that will be used in Section
\ref{section:scalarsInImage}.
\begin{lemma} \label{lemma:TeichSplit}
  The short exact sequence
  \begin{align*}
    1\to 1+\ell\Z_\ell\to\Z_\ell^\times\to\F_\ell^\times\to 1
  \end{align*}
  is split by the Teichm\"uller lift.
\end{lemma}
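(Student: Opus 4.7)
The plan is to verify the two defining properties of a splitting of the given short exact sequence: that the Teichm\"uller lift $\omega \colon \F_\ell^\times \to \Z_\ell^\times$, $\lambda \mapsto \tilde\lambda$, is a group homomorphism, and that composing it with reduction modulo $\ell$ gives the identity on $\F_\ell^\times$. The second condition is immediate from the defining congruence $\tilde\lambda \equiv \lambda \pmod{\ell}$, so the substantive point is to establish multiplicativity of $\omega$.

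First I would clean up the characterization of $\tilde\lambda$: since $\tilde\lambda \in \Z_\ell^\times$, the equation $\tilde\lambda^\ell = \tilde\lambda$ is equivalent to $\tilde\lambda^{\ell-1} = 1$. Thus $\tilde\lambda$ is the unique $(\ell-1)$-st root of unity in $\Z_\ell^\times$ reducing to $\lambda$ modulo $\ell$. Existence and uniqueness come from Hensel's lemma applied to $f(X) = X^{\ell-1} - 1$ with approximate root $\lambda \in \Z_\ell$: we have $f(\lambda) \equiv 0 \pmod{\ell}$ by Fermat's little theorem, while $f'(\lambda) = (\ell-1)\lambda^{\ell-2}$ is a unit in $\Z_\ell$ because $\lambda \not\equiv 0 \pmod{\ell}$, so Hensel produces a unique $\tilde\lambda \in \Z_\ell$ satisfying both conditions.

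With this in hand, multiplicativity is a pure uniqueness argument. Given $\lambda, \mu \in \F_\ell^\times$, the product $\tilde\lambda\, \tilde\mu$ satisfies $(\tilde\lambda \tilde\mu)^{\ell-1} = \tilde\lambda^{\ell-1}\, \tilde\mu^{\ell-1} = 1$ and reduces to $\lambda\mu$ modulo $\ell$; by the uniqueness clause, it must therefore coincide with $\widetilde{\lambda\mu}$. Combined with the compatibility with reduction mod $\ell$, this exhibits $\omega$ as a section of the surjection $\Z_\ell^\times \onto \F_\ell^\times$ and concludes the proof.

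I do not anticipate any genuine obstacle here: the entire argument is a direct invocation of Hensel's lemma, whose uniqueness statement carries all the weight. The only point requiring a moment's reflection is recognising that, for units, the relation $\tilde\lambda^\ell = \tilde\lambda$ is just the unit version of $X^{\ell-1} = 1$, which is the natural polynomial to hand to Hensel.
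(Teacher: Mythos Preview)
Your proof is correct and is the standard argument. The paper does not actually give a proof of this lemma: it simply states it as well-known and refers to \cite[Corollary 4.5.10]{gouvea1997p}, so there is nothing to compare against beyond observing that your Hensel-plus-uniqueness verification is exactly the expected proof.
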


If $m$ and $n$ are positive integers we
extend $v_\ell$ to the additive group of $m\times n$ matrices with coefficients in $\Z_\ell$ as follows: if
$A=(a_{ij})_{1\leq i\leq m,\,1\leq j\leq n}\in\Mat_{m\times n}(\Z_\ell)$
we let $v_\ell(A):=\min\set{v_\ell(a_{ij})\mid1\leq i\leq m,\,1\leq j\leq n}$.
In particular, for $m=n$ we obtain a valuation $v_\ell$ on the ring
$\Mat_{n\times n}(\Z_\ell)$.
The following is proven by an immediate induction on $v_\ell(n)$:
\begin{lemma}
  \label{lemma:ellToTheN}
  Let $s$ be a positive integer and let $h\in \GL_s(\Z_\ell)$.
  If $v_\ell(h-\Id)>0$, then $v_\ell(h^n-\Id)>v_\ell(n)$ for all positive integers $n$.
\end{lemma}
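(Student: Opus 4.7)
The plan is to follow the hint and induct on $v_\ell(n)$. For the base case, when $v_\ell(n)=0$ (i.e.\ $\ell\nmid n$), the claim $v_\ell(h^n-\Id)>0$ just says that $h^n\equiv\Id\pmod\ell$, which is immediate from $h\equiv\Id\pmod\ell$ since the reduction mod $\ell$ is a ring homomorphism.

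For the inductive step, I assume the statement is known for all exponents of $\ell$-adic valuation strictly less than $v_\ell(n)$, write $n=\ell m$ with $v_\ell(m)=v_\ell(n)-1$, and note that by induction $v_\ell(h^m-\Id)\geq v_\ell(n)$. Setting $B:=h^m-\Id$, I expand
\[
  h^n-\Id=(\Id+B)^\ell-\Id=\ell B+\sum_{k=2}^{\ell-1}\binom{\ell}{k}B^k+B^\ell
\]
and estimate each summand using the submultiplicativity $v_\ell(XY)\geq v_\ell(X)+v_\ell(Y)$ of the matrix valuation (which is a direct consequence of the definition of matrix multiplication and the non-archimedean triangle inequality on entries). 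The linear term satisfies $v_\ell(\ell B)\geq 1+v_\ell(n)>v_\ell(n)$; the middle binomial terms have $v_\ell(\binom{\ell}{k})=1$ for $1\leq k\leq \ell-1$, so they contribute valuation at least $1+k\cdot v_\ell(n)>v_\ell(n)$; and $v_\ell(B^\ell)\geq \ell\cdot v_\ell(n)>v_\ell(n)$, using $v_\ell(n)\geq 1$ and $\ell\geq 2$.

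The only point that requires any care is that the whole argument uses a matrix version of the $p$-adic valuation, so I should check once (perhaps as a tacit invocation of the submultiplicativity of $v_\ell$) that $v_\ell(XY)\geq v_\ell(X)+v_\ell(Y)$ holds for $X,Y\in\Mat_{s\times s}(\Z_\ell)$; nothing else in the proof is non-routine. I do not foresee a serious obstacle: once the binomial expansion is written down and the submultiplicativity of $v_\ell$ is in hand, all three families of terms in the expansion contribute valuation strictly greater than $v_\ell(n)$, which closes the induction.
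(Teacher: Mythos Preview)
Your proof is correct and follows exactly the approach the paper indicates (the paper merely states that the lemma is proven by an immediate induction on $v_\ell(n)$ and gives no further details). The binomial expansion together with submultiplicativity of the matrix valuation is precisely the intended argument, and your handling of each term is accurate.
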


\subsection{Cartan subgroups\texorpdfstring{ of $\GL_2(\F_\ell)$}{}}\label{subsect:CartanSubgroups}

We recall the definition and basic properties of Cartan subgroups of $\GL_2(\F_\ell)$ when $\ell$ is an odd prime.
\begin{definition}
  Let $\ell> 2$ be a prime and let $\delta\in\F_\ell^\times$. We call
  \begin{align*}
    C_\ell(\delta):=\set{\matr x{\delta y}yx\mid x,y\in\F_\ell,\,
                        x^2-\delta y^2\neq0}\subseteq \GL_2(\F_\ell)
  \end{align*}
  the \emph{Cartan subgroup of $\GL_2(\F_\ell)$ with parameter $\delta$}.
  We call $C_\ell(\delta)$ \emph{split} if $\delta$ is a square in $\F_\ell$, and \emph{nonsplit} otherwise.
  We also denote by $N_\ell(\delta)$ the normalizer of $C_\ell(\delta)$
  in $\GL_2(\F_\ell)$.
\end{definition}

\begin{remark}
Let $\lambda \in \mathbb{F}_\ell^\times$. Conjugating $C_\ell(\delta)$ by $\begin{pmatrix}
\lambda & 0 \\
0 & 1
\end{pmatrix}$ gives $C_\ell(\delta \lambda^2)$, so that a Cartan subgroup is determined (up to conjugacy in $\GL_2(\mathbb{F}_\ell)$) by the class of $\delta \in \mathbb{F}_\ell^\times / \mathbb{F}_\ell^{\times 2}$, that is, only by whether or not $\delta$ is a square in $\mathbb{F}_\ell^\times$.
\end{remark}

\begin{lemma}[{\cite[Lemma 14]{MR3690236}}]
  Let $\ell> 2$ be a prime and let $\delta\in\F_\ell^\times$.
  The Cartan subgroup $C_\ell(\delta)$ has index $2$ in $N_\ell(\delta)$.
  More precisely, we have
  \begin{align*}
    N_\ell(\delta)=C_\ell(\delta)\cup \matr100{-1}\cdot C_\ell(\delta)\,.
  \end{align*}
\end{lemma}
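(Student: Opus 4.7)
The plan is to prove the equality $N_\ell(\delta) = C_\ell(\delta) \cup \sigma \cdot C_\ell(\delta)$ with $\sigma := \matr{1}{0}{0}{-1}$ by establishing both inclusions. For the easy direction I would simply verify by a direct computation that $\sigma \matr{x}{\delta y}{y}{x} \sigma^{-1} = \matr{x}{\delta(-y)}{-y}{x}$, which again belongs to $C_\ell(\delta)$. Since $\sigma \notin C_\ell(\delta)$ (the diagonal entries of any element of $C_\ell(\delta)$ agree, whereas those of $\sigma$ differ because $\ell > 2$), this already shows $[N_\ell(\delta) : C_\ell(\delta)] \geq 2$ and gives the containment $\supseteq$.

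For the reverse inclusion my strategy is to single out the element $J := \matr{0}{\delta}{1}{0} \in C_\ell(\delta)$, which satisfies $J^2 = \delta\, I$ and has distinct eigenvalues $\pm\sqrt{\delta}$ (so in particular is non-scalar). Using the parametrisation $\matr{x}{\delta y}{y}{x} = xI + yJ$, the Cartan subgroup $C_\ell(\delta)$ is exactly the group of units of the commutative subalgebra $\F_\ell[J] \subseteq \Mat_{2\times 2}(\F_\ell)$. For any $g \in N_\ell(\delta)$ the conjugate $gJg^{-1}$ must lie in $C_\ell(\delta)$ and still square to $\delta\, I$. Expanding $(xI + yJ)^2 = (x^2 + \delta y^2)I + 2xy J$ and imposing equality with $\delta I$ (using $\ell > 2$) forces $xy = 0$ together with $x^2 + \delta y^2 = \delta$; the solutions are $(x,y) = (0, \pm 1)$, giving $\pm J$, and, only in the split case, $(x,y) = (\pm\sqrt{\delta}, 0)$, giving the scalar matrices $\pm\sqrt{\delta}\,I$. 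The latter are discarded because $gJg^{-1}$ is conjugate to $J$, hence non-scalar.

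It then remains to treat the two possibilities $gJg^{-1} = \pm J$. If $gJg^{-1} = J$, then $g$ centralizes the non-scalar matrix $J$; the centralizer of such a matrix in $\Mat_{2\times 2}(\F_\ell)$ is the subalgebra $\F_\ell[J]$, whose units are exactly $C_\ell(\delta)$, so $g \in C_\ell(\delta)$. If $gJg^{-1} = -J$, then from the first paragraph (specialised to $x = 0$, $y = 1$) one reads off $\sigma J \sigma^{-1} = -J$ as well, whence $\sigma^{-1} g$ centralizes $J$ and the previous case yields $\sigma^{-1} g \in C_\ell(\delta)$, i.e.\ $g \in \sigma \cdot C_\ell(\delta)$. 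The main obstacle, such as it is, is only the bookkeeping around the spurious scalar square roots of $\delta I$ appearing in the split case; the rest of the argument rests on the standard description of the centralizer of a non-scalar $2\times 2$ matrix.
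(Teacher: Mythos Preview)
Your argument is correct and complete: the computation $\sigma J \sigma^{-1} = -J$, the identification $C_\ell(\delta) = \F_\ell[J]^\times$, the reduction of $gJg^{-1}$ to $\pm J$ via squaring, and the centralizer argument all work as stated. The paper does not actually prove this lemma; it is quoted from \cite[Lemma~14]{MR3690236} without proof, so there is no in-paper argument to compare against.
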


\begin{remark} \label{remark:newCartanModel}
  Let $\ell> 2$ be a prime and let $\delta\in \F_\ell^\times$. Considering
  the matrix $g=\matr111{-1}$, whose inverse is $\frac12g$, one sees that
  $C_\ell(1)$ is conjugated to the subgroup
  \begin{align*}
    C_\ell^*(1):=gC_\ell(1)g^{-1}=\set{\matr t00w\mid t,w\in\F_\ell^\times}
  \end{align*}
  of $\GL_2(\F_\ell)$, whereas for $\delta\neq 1$ it is conjugated to
  \begin{align*}
    C_\ell^*(\varepsilon):=gC_\ell(\delta)g^{-1}=
    \set{\matr{x+\varepsilon w}{-w}{w}{x-\varepsilon w}\mid x,w\in\F_\ell,\,
    x^2+(1-\varepsilon^2)w^2\neq0}
  \end{align*}
  where $\varepsilon=\frac{\delta+1}{\delta-1}$.
  Similarly, $N_\ell(\delta)$ is conjugated to
  \begin{align*}
    N^*_\ell(\varepsilon)=C^*_\ell(\varepsilon)\cup \matr0110\cdot
    C^*_\ell(\varepsilon)\,,
  \end{align*}
  which is the normalizer of $C_\ell^*(\varepsilon)$.
\end{remark}

\subsection{Subgroups of \texorpdfstring{$\GL_2(\F_\ell)$ and $\GL_2(\Z_\ell)$}{the $2$-dimensional linear group}}\label{subsect:GroupTheory}
Since we will need to rely on it several times throughout the paper, we remind the reader of the well-known classification of maximal subgroups of $\GL_2(\F_\ell)$, traditionally attributed to Dickson. For $\ell=2$ the group $\GL_2(\F_2)$ is isomorphic to $S_3$, so its subgroup structure is well-known. Assume now that $\ell>2$.
Recall that a subgroup $G$ of $\GL_2(\F_\ell)$ is said to be \textit{Borel} if it is conjugated to the subgroup of upper-triangular matrices, and is said to be \textit{exceptional} if its image in $\operatorname{PGL}_2(\mathbb{F}_\ell)$ is isomorphic to $A_4, S_4$ or $A_5$. Also recall the definition of Cartan subgroups from the previous section.

\begin{theorem}[{Dickson's classification, cf.~\cite[§2]{Serre}}]\label{thm:Dickson}
Let $\ell>2$ be a prime number and $G$ be a subgroup of $\operatorname{GL}_2(\mathbb{F}_\ell)$.
\begin{itemize}
\item If $\ell$ divides the order of $G$, then $G$ either contains $\operatorname{SL}_2(\mathbb{F}_\ell)$ or is contained in a Borel subgroup.
\item If $\ell$ does not divide the order of $G$, then $G$ is contained in the normaliser of a (split or nonsplit) Cartan subgroup or in an exceptional group.
\end{itemize}
\end{theorem}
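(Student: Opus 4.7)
The plan is to dichotomise according to whether $\ell$ divides $|G|$, exploiting in each case a feature special to characteristic $\ell$.

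For the case $\ell \mid |G|$, Cauchy's theorem produces an element $u \in G$ of order $\ell$. Since $1$ is the only $\ell$-th root of unity in $\overline{\F}_\ell$, the unique eigenvalue of $u$ is $1$, so $u$ is a transvection: it is conjugate to $\matr{1}{1}{0}{1}$ and fixes a unique line $L_u \subset \F_\ell^2$. A Sylow $\ell$-subgroup of $G$ has order dividing the $\ell$-part of $|\GL_2(\F_\ell)|$, which is just $\ell$, so it is cyclic of order $\ell$, generated by a single transvection. I would first treat the subcase in which $G$ has a unique Sylow $\ell$-subgroup $P$: then $P \triangleleft G$, so $G \subseteq N_{\GL_2(\F_\ell)}(P)$, and a direct calculation identifies this normaliser with the Borel subgroup stabilising $L_u$. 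In the opposite subcase, $G$ contains two transvections with distinct fixed lines, and I would invoke the classical fact that two such transvections generate $\SL_2(\F_\ell)$, yielding $\SL_2(\F_\ell) \subseteq G$.

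For the case $\ell \nmid |G|$, I would pass to the image $\bar G$ of $G$ in $\PGL_2(\F_\ell)$, which also has order coprime to $\ell$, and appeal to the classical Klein--Dickson classification of finite subgroups of $\PGL_2$ in characteristic coprime to the order: $\bar G$ is cyclic, dihedral, or isomorphic to one of $A_4$, $S_4$, $A_5$. The last three possibilities yield the exceptional subgroups by definition. If $\bar G$ is cyclic, then $G$ is a central extension of a cyclic group by the group of scalars $G \cap \F_\ell^\times$, hence abelian; every element of $G$ is semisimple (since $\gcd(|G|,\ell)=1$), so the elements of $G$ are simultaneously diagonalisable over $\overline{\F}_\ell$, and a short Galois-descent argument on the common eigenlines places $G$ inside a split or nonsplit Cartan subgroup of $\GL_2(\F_\ell)$. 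If $\bar G$ is dihedral, the preimage of its index-$2$ cyclic subgroup lies in some Cartan $C$, and an order count shows that $G \subseteq N_{\GL_2(\F_\ell)}(C)$.

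The main obstacle is the Klein--Dickson classification used in the second case, which is classical but genuinely non-trivial; the standard strategy is to let the group act on $\mathbb{P}^1(\overline{\F}_\ell)$ and enumerate orbits of fixed points via a Burnside-type count, as carried out in \cite[\S2]{Serre}. Granting this input, the only delicate step remaining is the Galois descent: the two common eigenlines of an abelian subgroup of $\GL_2(\F_\ell)$ with semisimple elements are defined over $\F_{\ell^2}$, and are either both $\F_\ell$-rational (giving a split Cartan) or interchanged by Frobenius (giving a nonsplit Cartan), which exhausts the two cases.
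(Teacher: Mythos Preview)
The paper does not actually prove this theorem: it is stated with a reference to \cite[\S2]{Serre} and used as a black box throughout. So there is no ``paper's own proof'' to compare against; you have supplied an argument where the authors simply cite one.

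That said, your outline is correct and is essentially the standard proof one finds in the cited reference. A couple of small remarks. In the case $\ell\mid |G|$ with multiple Sylow $\ell$-subgroups, the fact that two transvections with distinct fixed lines generate $\SL_2(\F_\ell)$ is exactly the statement that the upper and lower unitriangular subgroups generate $\SL_2$ over any field, which follows from row reduction; you might say this explicitly rather than call it ``classical''. In the dihedral case, your claim that $G$ normalises the Cartan $C$ containing the preimage $H$ of the index-$2$ cyclic subgroup needs the observation that $C$ is the centraliser in $\GL_2(\F_\ell)$ of any non-scalar element of $H$ (and such an element exists once $\bar G$ has order $>2$, else $\bar G$ is cyclic); this is what forces $gCg^{-1}=C$ for $g\in G$. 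Finally, your Galois-descent step is fine, but note that once one non-scalar element of the abelian group has $\F_\ell$-rational eigenlines, all of them do (since they share eigenlines), so the split/nonsplit dichotomy is well-defined for the whole group. With these points filled in, the proof is complete.
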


To handle the profinite groups that arise as Galois representations attached to elliptic curves we will find it useful to employ a notion first introduced by Serre \cite[IV-25]{MR1484415}. 
We say that a non-abelian
finite simple group $\Sigma$ \textit{occurs} in the profinite group $Y$ if there exist a closed subgroup $Y_1$ of $Y$ and an open normal subgroup $Y_2$ of $Y_1$ such that $\Sigma \cong Y_1/Y_2$. We notice in particular that $\PSL_2(\F_\ell)$ occurs in $\GL_2(\Z_\ell)$. We will also need the following fact: for every exact sequence $1 \to N \to G \to G/N \to 1$ and every non-abelian finite simple group $\Sigma$, if $\Sigma$ occurs in $G$ then it occurs in at least one of $N$ and $G/N$ (and conversely), see again \cite[IV-25]{MR1484415}. 

\subsection{Galois representations and torsion fields of elliptic curves}\label{subsect:NotationTorsion}

Let $K$ be a number field and $E/K$ be a fixed elliptic curve. We will say that $E$ is \textit{non-CM} if $\operatorname{End}_{\overline K}(E)$ is $\mathbb{Z}$, or equivalently, if $E$ does not have CM over $\overline K$. We will denote by $E_{\tors}$ the group of all torsion points in $E(\overline{K})$.
Consider, for each positive integer $N$, the natural Galois representation
\[
\rho_{N} : \Gal(\overline{K} \mid K) \to \Aut(E[N])
\]
afforded by the $N$-torsion points of $E(\overline{K})$. We will often assume that a basis of the free $\mathbb{Z}/N\mathbb{Z}$-module $E[N]$ has been fixed, and therefore regard the image $G_N$ of $\rho_{N}$ as a subgroup of $\GL_2(\Z/N\Z)$.

We denote by $K_{\ell^n}$ the field fixed by the kernel of $\rho_{\ell^n}$, or equivalently the Galois extension of $K$ generated by the coordinates of all $\ell^n$-torsion points of $E$.
By passing to the limit in $n$ we also obtain the group $G_{\ell^\infty} = \Gal\left(K(E[\ell^\infty]) \mid K \right)$, which we consider as a subgroup of $\GL_2(\Z_\ell)$, and the corresponding fixed field $K_{\ell^\infty} = \bigcup_{n \geq 1} K_{\ell^n}$. Finally, we also denote by $K_\infty$ the field generated by the various $K_{\ell^\infty}$ as $\ell$ varies. One can also define the adelic Tate module $TE := \varprojlim_{N} E[N]$, isomorphic to $\hat{\Z}^2$, and the adelic Galois representation $\rho_\infty : \Gal(\overline{K} \mid K) \to \Aut( TE )$. The Galois group $\Gal(K_\infty \mid K)$ is then isomorphic to the image $G_\infty$ of $\rho_\infty$ (hence to the inverse limit $\varprojlim_N \operatorname{Im} \rho_N$), and may be considered -- up to the choice of an isomorphism $E_{\tors} \cong \hat{\Z}^2$ -- as a subgroup of $\GL_2(\hat{\Z})$.
Finally we remark that, since all the representations $\rho_N$ are continuous and $\Gal(\overline{K} \mid K)$ is a compact topological group, all the groups just introduced are compact, and therefore closed in their respective ambient spaces.

\subsection{Modulo \texorpdfstring{$\ell$}{l} Galois representations of elliptic curves over \texorpdfstring{$\mathbb{Q}$}{Q}}
Our focus will be on elliptic curves defined over the field of rational numbers. The Galois representations attached to such curves have been studied extensively, and a number of powerful results on their possible images have been proven.
We will in particular need to rely on a famous theorem of Mazur concerning the degrees of cyclic isogenies of elliptic curves defined over $\mathbb{Q}$. To state it, let
\[
\mathcal{T}_0:=\set{p\text{ prime }\mid p\leq 17}\cup\{37\}.
\]
\begin{theorem}[{\cite[Theorem 1]{Mazur}}]
\label{thm:Mazur}
Let $p$ be a prime number and $E/\Q$ be an elliptic curve, and assume that $E$ has a $\mathbb{Q}$-rational subgroup of order $p$. Then $p\in \mathcal T_0\cup \set{19,43,67,163}$. If $E$ does not have CM over $\overline \Q$, then $p\in\mathcal{T}_0$.
\end{theorem}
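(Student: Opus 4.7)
The plan is to translate the problem into one about rational points on modular curves. The curve $X_0(p)$ is a moduli space whose non-cuspidal $\Q$-rational points parametrize (up to quadratic twist) pairs $(E, C)$ of an elliptic curve and a $\Q$-rational cyclic subgroup of order $p$. First I would dispose of the primes $p \in \{2, 3, 5, 7\}$ where $X_0(p)$ has genus zero and is isomorphic to $\mathbb{P}^1_\Q$, so that infinitely many isogenies exist and the conclusion is automatic. For $p \geq 11$ the genus of $X_0(p)$ is positive; Faltings's theorem would then give finiteness of $X_0(p)(\Q)$ for each such $p$, but the desired result requires an effective and uniform control that actually identifies which primes can occur.

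The core tool is Mazur's analysis of the Jacobian $J_0(p)$ via the Eisenstein ideal $I$ in the Hecke algebra $\mathbb{T}$ acting on $J_0(p)$. The cuspidal divisor class $c := [(0) - (\infty)]$ is known to be a $\Q$-rational torsion point whose order is the numerator of $(p-1)/12$. One introduces the Eisenstein quotient $\widetilde{J} := J_0(p)/I J_0(p)$ and proves (i) that $c$ retains its full order in $\widetilde{J}(\Q)$, and, crucially, (ii) that $\widetilde{J}(\Q)$ is finite (in fact cyclic, generated by the image of $c$). Composing the Abel--Jacobi embedding $j \colon X_0(p) \to J_0(p)$ based at the cusp $\infty$ with the projection to $\widetilde{J}$ produces a map $\phi$ sending $\Q$-rational points into a known finite group. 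A reduction-modulo-$\ell$ argument at a small prime $\ell \neq p$ of good reduction, combined with the Eichler--Shimura relation, then forces any non-cuspidal rational point to satisfy strong congruence constraints that can only be met for $p$ in a short, explicit list.

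A final case-by-case analysis for the remaining primes $p \geq 11$ shows that $p \in \{11, 17, 37\}$ support non-CM rational isogenies, that $p \in \{19, 43, 67, 163\}$ -- precisely the primes $p$ such that $-p$ is the discriminant of an imaginary quadratic field of class number one -- admit only CM examples, and that no other prime $p \geq 11$ occurs at all. The principal obstacle is step (ii): proving that $\widetilde{J}$ has Mordell--Weil rank zero over $\Q$ requires Mazur's delicate descent argument exploiting the kernel of the Eisenstein ideal, together with a careful study of the special fibre of the N\'eron model of $J_0(p)$ at $p$ (where $J_0(p)$ acquires purely toric reduction). Identifying the sporadic CM cases, by contrast, reduces to a classical computation with elliptic curves having complex multiplication by the maximal order of an imaginary quadratic field of class number one, whose isogenies are completely understood through the theory of complex multiplication.
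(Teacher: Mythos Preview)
The paper does not prove this theorem: it is stated with a citation to Mazur's original article and then used as a black box throughout. There is therefore no ``paper's own proof'' to compare your proposal against.

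As a sketch of Mazur's actual argument your outline is broadly accurate, with one small slip: $X_0(13)$ also has genus zero, so $p=13$ should be grouped with $\{2,3,5,7\}$ in your first step rather than being fed into the Eisenstein-quotient machinery. More substantively, what you have written is a plan rather than a proof. The step you yourself flag as the principal obstacle --- establishing that the Eisenstein quotient $\widetilde{J}$ has Mordell--Weil rank zero over $\Q$ --- is the heart of Mazur's paper, and you have only named it, not carried it out. Likewise the ``reduction-modulo-$\ell$ argument'' you invoke is really Mazur's formal-immersion criterion, which requires its own nontrivial verification. So your proposal correctly identifies the architecture of the proof but does not supply any of the load-bearing arguments; it would not stand on its own as a proof of the statement.
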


\section{Scalars in the image of Galois representations}
\label{section:scalarsInImage}
Let $E$ be an elliptic curve over a number field $K$ and let $\ell$ be a prime number.
Our purpose in this section is to study the intersection $G_{\ell^\infty} \cap \mathbb{Z}_\ell^\times \cdot \Id$, that is, the subgroup of scalar matrices in the image of the $\ell$-adic Galois representation attached to $E/K$. We will focus mostly, but not exclusively, on the case $K=\Q$. The main result is Theorem \ref{thm:ScalarsImageGalois}, which -- for each prime $\ell$ -- describes a subgroup of $\mathbb{Z}_\ell^\times \cdot \Id$ that is guaranteed to be contained in $G_{\ell^\infty}$ for all non-CM elliptic curves over $\Q$ (see also Proposition \ref{prop:ScalarsInImageCM} for the CM case). To simplify the notation, we will often identify $(\Z/\ell^n\Z)^\times$ (resp.~$\Z_\ell^\times$) with the subgroup $(\Z/\ell^n\Z)^\times \cdot \Id$ (resp.~$\Z_\ell^\times \cdot \operatorname{Id}$) of $\GL_2(\Z/\ell^n\Z)$ (resp.~$\GL_2(\Z_\ell)$).

Since it helps understanding the relevance of the criteria in the next subsection, we briefly contextualise the group-theoretic properties we are going to consider in terms of the Galois representations attached to elliptic curves over $\Q$. Let $E/\Q$ be an elliptic curve and let $G_{\ell^\infty}$ (respectively $G_\ell$) be the image of the corresponding $\ell$-adic (respectively mod $\ell$) Galois representation. To begin with, one has $\det(G_{\ell^\infty})=\mathbb{Z}_\ell^\times$, because for $\sigma \in \Gal(\overline{\Q} \mid \Q)$ the determinant of $\rho_{\ell^\infty}(\sigma)$ is simply $\chi_{\ell^\infty}(\sigma)$, and it is well-known that the $\ell$-adic cyclotomic character $\chi_{\ell^\infty}$ is surjective. Moreover, when $E$ is non-CM and $\ell \not \in \mathcal{T}_0$, by Theorem \ref{thm:Mazur} we know that $G_\ell$ acts irreducibly on $E[\ell]$; in particular, this holds for all $\ell > 37$. We prove in Lemma \ref{lemma:SuperLift} below that if $G_\ell$ acts irreducibly on $E[\ell]$ and $\ell \mid \#G_\ell$ then $G_{\ell^\infty}=\GL_2(\Z_\ell)$, so the most interesting case (for $\ell$ large) is $\ell \nmid \#G_\ell$. In this case \cite[Proposition 1.13]{PossibleImages} (or equivalently \cite[Appendix B]{PedroSamuel}) shows that (up to conjugacy) there are only two possibilities for $G_{\ell}$, namely a non-split Cartan subgroup or the unique index-3 subgroup thereof. These are therefore the most interesting situations, and are explored in Corollary \ref{cor:AllScalars}.
Finally, notice that the image of a complex conjugation in $G_{\ell^\infty}$ is a matrix of order $2$ with determinant $-1$, so -- up to conjugation -- when $\ell > 2$ we may assume that it is $\matr 0110$. This explains the relevance of this specific matrix for the statement of Proposition \ref{proposition:containsAllScalars}.

\subsection{Group-theoretic criteria}\label{subsect:GroupTheoreticCriteria}
In this section we establish several criteria that guarantee that a closed subgroup $G$ of $\GL_2(\Z_\ell)$ contains an (explicit) open subgroup of $\Z_\ell^\times$. A further result of the same kind, whose proof is however more involved, is stated and proved in Appendix \ref{sect:pAdicAppendix}.
The criteria in this section will be expressed in terms of $G_\ell$, the image of $G$ under reduction modulo $\ell$. More generally, we will employ the following notation:

\medskip

\noindent\textbf{Notation.}
Let $G$ be a subgroup of $\GL_2(\Z_\ell)$. We denote by $G_{\ell^n}$ the image of $G$ under the reduction map $\GL_2(\Z_\ell) \to \GL_2(\Z/\ell^n\Z)$.

\begin{lemma}
  \label{lemma:TeichLifts}
  Let $\ell$ be a prime and let $g\in \GL_2(\Z_\ell)$ be such that
  $g\equiv \lambda\Id\pmod\ell$ for some $\lambda\in\F_\ell^\times$.
  Let moreover $\tilde\lambda\in \Z_\ell^\times$ be the Teichm\"uller lift
  of $\lambda$. Then the sequence $\{g^{\ell^n}\}_{n\geq 1}$ converges to
  $\tilde\lambda\Id\in \GL_2(\Z_\ell)$.
\end{lemma}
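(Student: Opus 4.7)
The plan is to reduce the claim to the statement that $h = \tilde\lambda^{-1} g$ converges to the identity after raising to powers of $\ell$, which will follow directly from Lemma \ref{lemma:ellToTheN}, together with the observation that the Teichmüller lift is invariant under the map $x \mapsto x^{\ell^n}$.

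First I would set $h := (\tilde\lambda\Id)^{-1} \cdot g \in \GL_2(\Z_\ell)$. Since $\tilde\lambda \equiv \lambda \pmod \ell$ by definition of the Teichmüller lift, the hypothesis $g \equiv \lambda \Id \pmod \ell$ gives $h \equiv \Id \pmod \ell$, that is, $v_\ell(h - \Id) \geq 1 > 0$. Lemma \ref{lemma:ellToTheN} then yields $v_\ell(h^{\ell^n} - \Id) > v_\ell(\ell^n) = n$ for every $n \geq 1$, and in particular $h^{\ell^n} \to \Id$ in $\GL_2(\Z_\ell)$ as $n \to \infty$.

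Next I would observe that $\tilde\lambda^{\ell^n} = \tilde\lambda$ for every $n \geq 1$: this follows by an immediate induction from the defining relation $\tilde\lambda^\ell = \tilde\lambda$ (equivalently, $\tilde\lambda$ has order dividing $\ell - 1$, and $\ell^n \equiv 1 \pmod{\ell-1}$). Since the scalar matrix $\tilde\lambda \Id$ lies in the center of $\GL_2(\Z_\ell)$, it commutes with $h$, and therefore
\begin{equation*}
g^{\ell^n} = (\tilde\lambda \Id \cdot h)^{\ell^n} = \tilde\lambda^{\ell^n}\Id \cdot h^{\ell^n} = \tilde\lambda \Id \cdot h^{\ell^n}.
\end{equation*}
Taking the limit as $n \to \infty$ and using the continuity of multiplication in $\GL_2(\Z_\ell)$, the right-hand side tends to $\tilde\lambda \Id$, which is the desired conclusion.

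There is no real obstacle here: the argument is a two-line computation once the factorisation $g = (\tilde\lambda \Id) \cdot h$ is in place. The only point that deserves a moment's care is checking that the statement remains correct in the edge case $\ell = 2$, where $\lambda = \tilde\lambda = 1$; in that case $h = g$ and the conclusion $g^{2^n} \to \Id$ follows directly from Lemma \ref{lemma:ellToTheN}. Note also that the proof works uniformly in $\ell$ and does not require the restriction $\ell > 2$ imposed in Lemma \ref{lemma:ScalarsValuationOne}.
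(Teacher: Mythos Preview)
Your proof is correct and follows essentially the same route as the paper's: factor $g = \tilde\lambda\,h$ with $h \equiv \Id \pmod{\ell}$, use Lemma~\ref{lemma:ellToTheN} to get $h^{\ell^n}\to\Id$, and combine with $\tilde\lambda^{\ell^n}=\tilde\lambda$. The only cosmetic difference is that the paper invokes Lemma~\ref{lemma:TeichSplit} to motivate the factorisation, whereas you simply define $h=\tilde\lambda^{-1}g$ directly and verify $h\equiv\Id\pmod\ell$ by hand.
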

\begin{proof}
  By Lemma \ref{lemma:TeichSplit} we can write $g=\tilde\lambda h$, where
  $h=\Id+\ell h_1\in\GL_2(\Z_\ell)$ is congruent to the identity modulo $\ell$. Then for
  any $n\geq 1$ we have $g^{\ell^n}=\tilde\lambda^{\ell^n}h^{\ell^n}=
  \tilde\lambda h^{\ell^n}$. By Lemma \ref{lemma:ellToTheN} we have that
  $v_\ell((\Id+\ell h_1)^{\ell^n}-\Id)>n$ for every $n\geq 0$. This means that
  the sequence $\{h^{\ell^n}\}_{n\geq 1}$ converges to $\Id$, hence
  $\set{g^{\ell^n}}_{n\geq 1}$ converges to $\tilde\lambda\Id$.
\end{proof}

\begin{corollary}\label{cor:LiftingScalars}
Let $\ell$ be a prime and let $G$ be a closed subgroup of $\GL_2(\mathbb{Z}_\ell)$. Suppose that the image of $G$ in $\GL_2(\mathbb{F}_\ell)$ contains $\lambda \operatorname{Id}$ for some $\lambda \in \mathbb{F}_\ell^\times$: then $G$ contains $\tilde{\lambda} \operatorname{Id}$.
\end{corollary}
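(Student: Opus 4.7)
The plan is to deduce this directly from Lemma \ref{lemma:TeichLifts}, together with the closedness hypothesis on $G$. First I would pick an element $g \in G$ whose reduction modulo $\ell$ is $\lambda \operatorname{Id}$; such a $g$ exists by the assumption that $\lambda \operatorname{Id}$ lies in the image of $G$ in $\GL_2(\F_\ell)$. Since $G$ is a subgroup of $\GL_2(\Z_\ell)$, every power $g^{\ell^n}$ belongs to $G$.

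Next I would invoke Lemma \ref{lemma:TeichLifts} for the element $g$: it gives that the sequence $\{g^{\ell^n}\}_{n \geq 1}$ converges (in the $\ell$-adic topology of $\GL_2(\Z_\ell)$) to $\tilde\lambda \operatorname{Id}$. Since $G$ is closed and contains every term of this sequence, it must also contain its limit, so $\tilde\lambda \operatorname{Id} \in G$.

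There is no real obstacle here: the statement is essentially a repackaging of Lemma \ref{lemma:TeichLifts}, with the only additional ingredient being the fact that a closed subgroup is stable under taking limits of convergent sequences of its elements. The proof is therefore just one or two lines.
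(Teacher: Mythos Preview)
Your proposal is correct and matches the paper's proof essentially verbatim: pick $g \in G$ reducing to $\lambda\operatorname{Id}$, apply Lemma~\ref{lemma:TeichLifts} to get $g^{\ell^n} \to \tilde\lambda\operatorname{Id}$, and use closedness of $G$ to conclude.
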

\begin{proof}
Let $g \in G$ reduce to $\lambda \operatorname{Id}$ modulo $\ell$. By the previous lemma the sequence $\{g^{\ell^n}\}$ converges to $\tilde{\lambda} \operatorname{Id}$, so this is an element of $G$ since by assumption $G$ is closed.
\end{proof}

The following result can be found in \cite[Lemma 2.5]{zywyna-bounds}, but we
include the proof here for ease of reference.

\begin{lemma}
  \label{lemma:determinantCongruentOne}
  Let $n$ be a positive integer, let $\ell>2$ be a prime and let $G$ be a closed
  subgroup of $\GL_2(\Z_\ell)$. Let
  \begin{align*}
    H_n:=\set{g\in G\mid g\equiv\Id\pmod {\ell^n}}.
  \end{align*}
  If $\det(G)=\Z_\ell^\times$ and $\ell \nmid \#G_\ell$,
  then $\det(H_n)=1+\ell^n\Z_\ell$.
\end{lemma}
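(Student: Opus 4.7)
The inclusion $\det(H_n) \subseteq 1+\ell^n\Z_\ell$ is immediate from the definition of $H_n$, so the plan is to establish the reverse inclusion. My strategy will be to exhibit a single element $h \in H_n$ whose determinant $\lambda$ satisfies $v_\ell(\lambda - 1) = n$; once this is done, Lemma \ref{lemma:ScalarsValuationOne} applied to the closed subgroup $\det(H_n) \subseteq \Z_\ell^\times$ (closed because $H_n$ is compact and $\det$ is continuous) will immediately give $\det(H_n) \supseteq 1+\ell^n\Z_\ell$.

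To produce such an $h$, I will first use the surjectivity of $\det|_G$ to pick $g \in G$ with $\det(g) = 1+\ell$, an element of valuation exactly $1$ above $1$. Since $\ell \nmid \#G_\ell$, the order $m$ of the image of $g$ in $G_\ell$ is coprime to $\ell$, so $g^m$ reduces to $\Id$ modulo $\ell$. I then set
\[
h := g^{m\ell^{n-1}}.
\]
By Lemma \ref{lemma:ellToTheN} applied to $g^m$, one has $v_\ell(h - \Id) > v_\ell(\ell^{n-1}) = n-1$, hence $h \in H_n$.

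It remains to control $\det(h) = (1+\ell)^{m\ell^{n-1}}$. The core $\ell$-adic calculation is that for $\ell$ odd one has $v_\ell\bigl((1+\ell)^{\ell^{n-1}} - 1\bigr) = n$, proved by induction on $n$ using the binomial expansion $(1+\ell^k x)^\ell = 1 + \ell^{k+1} x + \binom{\ell}{2}\ell^{2k} x^2 + \cdots$ and the fact that $v_\ell\bigl(\binom{\ell}{2}\bigr) = 1$ when $\ell > 2$. Then raising to the $m$-th power, with $\gcd(m,\ell)=1$, preserves the valuation of the difference with $1$ (again by a binomial expansion: the linear term has valuation $n$ while the quadratic and higher terms have valuation $\geq 2n$). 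Hence $v_\ell(\det(h)-1) = n$, and the argument concludes as described above.

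The only mildly delicate point is the $\ell$-adic valuation computation in the last step, which crucially uses $\ell > 2$; for $\ell = 2$ the carry from $\binom{\ell}{2}$ breaks the pattern, which is consistent with the hypothesis of the lemma. Apart from this, the argument is entirely mechanical once one has the idea of combining the exponent $m$ (which kills the mod-$\ell$ image) with the exponent $\ell^{n-1}$ (which both pushes us into $H_n$ via Lemma \ref{lemma:ellToTheN} and raises the determinant valuation to exactly $n$).
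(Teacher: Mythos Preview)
Your proof is correct and follows essentially the same approach as the paper's: pick $g$ with $\det(g)=1+\ell$, raise it to a power coprime to $\ell$ that kills the mod-$\ell$ image and then to $\ell^{n-1}$, and apply Lemmas \ref{lemma:ellToTheN} and \ref{lemma:ScalarsValuationOne}. The only cosmetic difference is that the paper uses the exponent $\#G_\ell$ in place of your $m$ (the order of the image of $g$), which of course works for the same reason.
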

\begin{proof}
  Clearly $\det(H_n)\subseteq 1+\ell^n\Z_\ell$, so we only need to prove the
  other inclusion.
  Since $\det(G)=\Z_\ell^\times$ there is $g\in G$ such that $\det(g)=1+\ell$.
  Then by Lemma \ref{lemma:ellToTheN} the element $h:=g^{\ell^{n-1}\cdot\#G_1}$
satisfies $h\equiv \Id\pmod{\ell^n}$, so it belongs to $H_n$. Moreover
  \begin{align*}
    \det(h)=(1+\ell)^{\ell^{n-1}\cdot \#G_1}\equiv
            1+\#G_1\ell^{n}\pmod{\ell^{n+1}}
  \end{align*}
  and since $\ell\nmid \#G_\ell$ we have $v_\ell(\det(h)-1)=n$. By Lemma
  \ref{lemma:ScalarsValuationOne} we conclude that $\det(H)$ contains
  $1+\ell^n\Z_\ell$.
\end{proof}

We now come to our criterion for the existence of scalars in $G$ when $\ell \nmid \#G_\ell$.

\begin{proposition} \label{proposition:containsAllScalars}
  Let $\ell>2$ be a prime and $G$ be a closed subgroup of $\GL_2(\Z_\ell)$ 
  such that $\det G=\Z_\ell^\times$. Assume that $G_\ell$ contains
  $\tau=\matr0110$ and that $\ell \nmid \#G_\ell$.
\begin{enumerate}
\item Suppose that $G_\ell$ contains an element $u$ for which one the following holds:
\begin{enumerate} 
\item $u$ anti-commutes with $\tau$, that is, $u\tau = - \tau u$;
\item there exists $\varepsilon \in \F_\ell^\times \setminus \{1\}$ such that, for all antidiagonal matrices $A=\begin{pmatrix}
0 & x \\
y & 0
\end{pmatrix}$, we have $uAu^{-1} = \begin{pmatrix}
0 & \varepsilon x \\
\varepsilon^{-1} y  & 0
\end{pmatrix}$.
\end{enumerate}
Then $G$ contains $1+\ell\Z_\ell$.
\item Suppose that one of the assumptions of (1) holds, and that moreover $G_\ell$ contains $\F_\ell^\times$. Then $G$ contains $\Z_\ell^\times$.
\end{enumerate}
\end{proposition}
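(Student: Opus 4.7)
The plan is to pass to the Lie algebra: let $H_1 := G \cap (\Id + \ell \Mat_{2\times 2}(\Z_\ell))$ and $\mathfrak{g} := \log(H_1)$, a closed $\Z_\ell$-submodule of $\ell\Mat_{2\times 2}(\Z_\ell)$ on which $G$ acts by conjugation. Since $\exp$ and $\log$ are mutually inverse bijections between $H_1$ and $\mathfrak g$ (as $\ell>2$), it suffices to show that $\ell \Z_\ell \Id \subseteq \mathfrak{g}$; applying $\exp$ will then yield $(1 + \ell \Z_\ell)\Id \subseteq H_1 \subseteq G$. Two structural observations make this tractable: since $\ell \nmid \#G_\ell$, Maschke's theorem applies and the Reynolds operator $e = \tfrac{1}{\#G_\ell}\sum_{g \in G_\ell^\flat} \operatorname{Ad}(g)$ (defined using a Schur--Zassenhaus complement $G_\ell^\flat$ of $H_1$ in $G$) produces $G_\ell$-equivariant projections onto invariants; and under either hypothesis in (1), $G_\ell$ acts absolutely irreducibly on $\F_\ell^2$, so Schur's lemma gives $\End_{\Z_\ell[G_\ell]}(\Z_\ell^2) = \Z_\ell \Id$.

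The first step is therefore to verify absolute irreducibility. In case (a), a matrix anti-commuting with $\tau$ must have the form $\matr{a}{b}{-b}{-a}$ with $a \neq \pm b$ (to be invertible); an explicit check shows that $u$ interchanges the two $\tau$-eigenlines $\F_\ell \cdot (1,\pm 1)$, and a short computation over $\overline{\F_\ell}$ rules out any further common stable line. In case (b), applying the hypothesis on $u$ to the matrix units $\matr{0}{1}{0}{0}$ and $\matr{0}{0}{1}{0}$ forces $u = \matr{\varepsilon s}{0}{0}{s}$ for some $s \in \F_\ell^\times$, so the eigenlines of $u$ are the coordinate lines, which are swapped by $\tau$, giving absolute irreducibility.

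The second step is the computation $\mathfrak{g}^{G_\ell} = \ell \Z_\ell \Id$. By Schur, $\mathfrak{g}^{G_\ell} \subseteq \Z_\ell \Id \cap \mathfrak{g} \subseteq \ell \Z_\ell \Id$. For the reverse inclusion I use the $G_\ell$-equivariant projection $\pi_1 \colon X \mapsto \tfrac{\tr X}{2} \Id$ onto $\Z_\ell \Id$. The identity $\tr\circ\log = \log\circ\det$ on $H_1$, combined with Lemma \ref{lemma:determinantCongruentOne} (which gives $\det H_1 = 1 + \ell \Z_\ell$), yields $\tr(\mathfrak{g}) = \log(1 + \ell \Z_\ell) = \ell \Z_\ell$, so $\pi_1(\mathfrak{g}) = \ell \Z_\ell \Id$. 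Because the Reynolds operator commutes with the $G_\ell$-equivariant $\pi_1$, we get $\pi_1(\mathfrak{g}^{G_\ell}) = \pi_1(\mathfrak{g}) = \ell \Z_\ell \Id$; since $\pi_1$ restricts to the identity on $\Z_\ell \Id \supseteq \mathfrak{g}^{G_\ell}$, this forces $\mathfrak{g}^{G_\ell} = \ell \Z_\ell \Id$, and hence (1). Statement (2) is then immediate: Corollary \ref{cor:LiftingScalars} lifts the scalars in $G_\ell$ to their Teichmüller scalars in $G$, and these together with $1 + \ell \Z_\ell$ generate $\Z_\ell^\times$ by the splitting in Lemma \ref{lemma:TeichSplit}.

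The main obstacle I anticipate is verifying absolute irreducibility cleanly in case (b), where the slightly unusual hypothesis on how $u$ conjugates antidiagonal matrices must be translated -- via testing on the matrix units -- into an explicit description of $u$ before the eigenline argument becomes straightforward.
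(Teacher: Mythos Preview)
Your argument is correct and takes a genuinely different route from the paper's proof. The paper proceeds by an elementary induction on $n$, showing $(1+\ell)\Id \in G_{\ell^n}$ for every $n$: it lifts a scalar to $G_{\ell^{n+1}}$, symmetrises via $C \mapsto C\,\tilde\tau\,C\,\tilde\tau^{-1}$ to reduce the error term to the shape $\matr abba$, and then uses $u$ (by anti-commutation in case (a), by a geometric-series average $\prod_k \tilde u^k C'\tilde u^{-k}$ in case (b)) to annihilate the off-diagonal part; Lemma~\ref{lemma:determinantCongruentOne} is invoked only to control the trace of the first error term. Your proof instead passes once and for all to the Lie algebra $\mathfrak g=\log H_1$, observes that hypotheses (a) and (b) are both avatars of the single statement that $G_\ell$ acts absolutely irreducibly on $\F_\ell^2$, and then uses Schur's lemma (lifted to $\Z_\ell$) together with the Reynolds projector to force $e(X)=\tfrac{\tr X}{2}\Id\in\mathfrak g$; Lemma~\ref{lemma:determinantCongruentOne} enters in the same way, via $\tr(\mathfrak g)=\log\det(H_1)=\ell\Z_\ell$. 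Your approach is more conceptual, treats (a) and (b) uniformly, and visibly generalises (e.g.\ to $\GL_n$); the paper's approach is completely elementary and avoids any appeal to $p$-adic Lie theory.

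One point you should make explicit: the assertion that $\mathfrak g=\log H_1$ is a $\Z_\ell$-\emph{submodule} of $\ell\Mat_{2\times2}(\Z_\ell)$ (in particular closed under addition, which is what you need for $e(\mathfrak g)\subseteq\mathfrak g$) is not automatic from the bijection $\exp\colon \ell\Mat_{2\times2}(\Z_\ell)\to 1+\ell\Mat_{2\times2}(\Z_\ell)$. It is true, and follows from Lazard's theory: for $\ell>2$ the valuation $\omega(g)=v_\ell(g-\Id)$ makes any closed subgroup of $1+\ell\Mat_{2\times2}(\Z_\ell)$ into a $p$-valued group with $\omega>\tfrac{1}{\ell-1}$, hence saturable, so its logarithm is a $\Z_\ell$-Lie lattice. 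A reference (e.g.\ Schneider, \emph{$p$-adic Lie Groups}, or the relevant sections of Dixon--du Sautoy--Mann--Segal) would make the write-up self-contained.
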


\begin{remark}\label{rmk:GoodElements}
It is immediate to check that the following elements of $\GL_2(\F_\ell)$ have the property required to apply part (1):
\begin{enumerate}
\item[(1a)] $u=\begin{pmatrix}
a & b \\ -b & -a
\end{pmatrix}$, where $a,b \in \F_\ell$ are such that $\det(u)=b^2-a^2 \neq 0$.
\item[(1b)] $u=\begin{pmatrix}
a & 0 \\ 0 & b
\end{pmatrix}$ with $a,b \in \F_\ell^\times$, $a \neq b$.
\end{enumerate}
\end{remark}

\begin{proof} By Lemma \ref{lemma:ScalarsValuationOne} the element $1+\ell$ generates $1+\ell \Z_\ell$, so it suffices to prove that $(1+\ell) \Id$ is in $G$.
As $G$ is closed, it suffices to show that $(1+\ell)\Id$ is in $G_{\ell^n}$ for every $n\geq 1$. We prove this by induction. For $n=1$ the statement holds trivially, so
  assume that $(1+\ell)\Id$ belongs to $G_{\ell^n}$ and let
  $C=(1+\ell)\Id+\ell^nB$ be a lift of this element to $G_{\ell^{n+1}}$, which
  exists because the map $G_{\ell^{n+1}}\to G_{\ell^n}$ is surjective. Notice that we may consider $B$ as an element of $\operatorname{Mat}_{2 \times 2}(\mathbb{F}_\ell)$. In addition, if $n=1$, thanks to Lemma \ref{lemma:determinantCongruentOne} we may assume that
  $\det(C)\not\equiv 1\pmod {\ell^2}$, and consequently that
  $\tr(B)\not\equiv -2\pmod\ell$. If $\tilde\tau$ is any lift of $\tau$ to
  $G_{\ell^{n+1}}$, the element
  \begin{align*}
   C' := C\tilde\tau C\tilde\tau^{-1}&=\left((1+\ell)\Id+\ell^nB\right)
    \left((1+\ell)\Id+\ell^n\tilde\tau B\tilde\tau^{-1}\right)\\
                    &=(1+\ell)^2\Id+(1+\ell)\ell^n(B+\tilde\tau B\tilde\tau^{-1})
                    +\ell^{2n}B\tilde\tau B\tilde\tau^{-1}\\
                    &\equiv (1+\ell)^2\Id+\ell^n(B+\tilde\tau B\tilde\tau^{-1})
                    \pmod{\ell^{n+1}}
  \end{align*}
is in $G_{\ell^{n+1}}$. Notice that
  $D:=B+\tau B\tau^{-1}$ is congruent to $\matr abba$ modulo $\ell$, where $a=\tr(B)$ and $b\in \F_\ell$.
  
\begin{itemize}
\item Suppose that $G_\ell$ contains an element $u$ as in part (1a). Then
\begin{align*}
u D u^{-1}\equiv\matr a{-b}{-b}a\pmod \ell\,.
\end{align*}
If $\tilde u\in G_{\ell^{n+1}}$ is a lift of $u$, the group $G_{\ell^{n+1}}$ contains
  \begin{align*}
    C'\tilde{u} C'\tilde{u}^{-1}& \equiv \left((1+\ell)^2\Id+\ell^nD\right)
    \left((1+\ell)^2\Id+\ell^n\tilde u D\tilde u^{-1}\right)\\
    &\equiv \left((1+\ell)^2\Id+\ell^n\matr abba\right)
      \left((1+\ell)^2\Id+ \ell^n\matr a{-b}{-b}a\right)\\
    &\equiv (1+\ell)^4\Id+2a\ell^n\Id\pmod{\ell^{n+1}}
  \end{align*}
  which is a scalar matrix congruent to $1+4\ell$ modulo $\ell^2$ if 
  $n>1$ or to $1+2\ell(2+a)$ if $n=1$. 
  
\item Suppose that $G_{\ell}$ contains an element $u$ as in part (1b). Then we have
\[
D_k := u^kDu^{-k} = \begin{pmatrix}
a & b\varepsilon^k \\
b\varepsilon^{-k} & a
\end{pmatrix}.
\]
Letting $\tilde{u}$ be a lift of $u$ to $G_{\ell^{n+1}}$ we obtain that for every non-negative integer $k$ the group $G_{\ell^{n+1}}$ contains
\[
\tilde{u}^k C' \tilde{u}^{-k} = (1+\ell)^2 \operatorname{Id} + \ell^n D_k.
\]
Thus, using the fact that $\sum_{k=0}^{\ell-2} \varepsilon^k = \frac{\varepsilon^{\ell-1}-1}{\varepsilon-1} = 0$, we see that $G_{\ell^{n+1}}$ also contains

\[
\begin{aligned}
\prod_{k=0}^{\ell-2} \tilde{u}^k C' \tilde{u}^{-k} & \equiv \prod_{k=0}^{\ell-2} \left( (1+\ell)^2 \operatorname{Id} + \ell^n D_k \right) \pmod{\ell^{n+1}} \\
& \equiv (1+\ell)^{2(\ell-1)} \operatorname{Id} + \ell^n(1+\ell)^{2(\ell-2)} \sum_{k=0}^{\ell-2} D_k  \pmod{\ell^{n+1}}\\
& \equiv
(1+\ell)^{2(\ell-1)} \operatorname{Id} - \ell^n \begin{pmatrix}
a & 0 \\
0 & a
\end{pmatrix} \pmod{\ell^{n+1}},
\end{aligned}
\]
  which is a scalar matrix congruent to $1-2\ell$ modulo $\ell^2$ if 
  $n>1$ or to $1-(2+a)\ell$ if $n=1$.
\end{itemize}  
In any case, using our assumption that $a=\tr(B) \not \equiv -2 \pmod{\ell}$ if $n=1$, we
see that $G_{\ell^{n+1}}$ contains a scalar matrix $\lambda \operatorname{Id}$ with $v_\ell(\lambda-1)=1$. 
  We can now apply Lemma \ref{lemma:ScalarsValuationOne} to the subgroup of
  $\Z_\ell^\times$ given by the inverse image of $G_{\ell^{n+1}}\cap(\Z/\ell^{n+1}\Z)^\times$
  under the natural projection, and we conclude that $(1+\ell)\Id\in G_{\ell^{n+1}}$ as desired.
    
Finally, if $\F_\ell^\times$ is contained in $G_{\ell}$, Lemma
  \ref{lemma:TeichLifts} shows that $G$ contains a Teichm\"uller lift of
  every element of $\F_\ell^\times$. By Lemma \ref{lemma:TeichSplit} this is
  enough to conclude that $G$ contains $\Z^\times_\ell$.
\end{proof}

\begin{lemma}\label{lemma:SuperLift}
Let $\ell \geq 5$ be a prime number and $G$ be a closed subgroup of $\GL_2(\Z_\ell)$. Suppose that $\det(G) = \Z_\ell^\times$. If $\ell \mid \#G_\ell$ and $G_\ell$ acts irreducibly on $\F_\ell^2$, then $G=\GL_2(\Z_\ell)$.
\end{lemma}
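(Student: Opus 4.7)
The plan is to combine Dickson's classification with a Frattini-style lifting argument inside $\SL_2(\Z_\ell)$. I would first use Theorem~\ref{thm:Dickson} to upgrade the hypothesis on the mod-$\ell$ image: since $G_\ell$ acts irreducibly on $\F_\ell^2$ it cannot lie in a Borel subgroup, so the assumption $\ell \mid \#G_\ell$ forces $G_\ell \supseteq \SL_2(\F_\ell)$. Together with $\det G_\ell = \F_\ell^\times$ (the mod-$\ell$ image of $\det G = \Z_\ell^\times$), this upgrades to $G_\ell = \GL_2(\F_\ell)$.

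Next, I would reduce the claim to a lifting statement purely inside $\SL_2(\Z_\ell)$. Consider the topological closure of the commutator subgroup $[G,G]$. Since $\det$ lands in an abelian group, $[G,G] \subseteq \SL_2(\Z_\ell)$, and its reduction modulo $\ell$ equals $[G_\ell, G_\ell] = [\GL_2(\F_\ell), \GL_2(\F_\ell)] = \SL_2(\F_\ell)$, where the last equality uses that $\SL_2(\F_\ell)$ is perfect for $\ell \geq 5$. It therefore suffices to prove the following lifting lemma: any closed subgroup $H \leq \SL_2(\Z_\ell)$ whose image modulo $\ell$ is all of $\SL_2(\F_\ell)$ must equal $\SL_2(\Z_\ell)$. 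Granted this, $G \supseteq [G,G] \supseteq \SL_2(\Z_\ell)$, and combining with $\det G = \Z_\ell^\times$ gives $G = \GL_2(\Z_\ell)$.

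I would prove the lifting lemma by induction on $n$, showing $H_{\ell^n} = \SL_2(\Z/\ell^n\Z)$ for all $n \geq 1$. The kernel $K_n$ of the reduction $\SL_2(\Z/\ell^{n+1}\Z) \to \SL_2(\Z/\ell^n\Z)$ is isomorphic to the additive group $\mathfrak{sl}_2(\F_\ell)$. The intersection $H_{\ell^{n+1}} \cap K_n$ is preserved under conjugation by $H_{\ell^{n+1}}$, and since inner automorphisms by elements congruent to $I$ modulo $\ell$ act trivially on $K_n$, this action factors through the adjoint representation of $\SL_2(\F_\ell)$ on $\mathfrak{sl}_2(\F_\ell)$. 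For $\ell \geq 5$ this representation is irreducible, so $H_{\ell^{n+1}} \cap K_n$ is either $\{0\}$ or all of $K_n$; in the latter case the induction advances.

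The main obstacle is therefore ruling out the trivial intersection. The plan here is direct: if $H_{\ell^{n+1}} \cap K_n = \{0\}$, the projection $H_{\ell^{n+1}} \to \SL_2(\Z/\ell^n\Z)$ is an isomorphism, and so its inverse yields a homomorphic section of the reduction map. Applied to the unipotent $u = \matr{1}{1}{0}{1}$, which has order $\ell^n$ in $\SL_2(\Z/\ell^n\Z)$, this would produce a lift $\tilde u \in \SL_2(\Z/\ell^{n+1}\Z)$ of the same order. A direct computation shows this is impossible: writing $\tilde u = I + N$ with $N = E + \ell^n V$ and $E = u - I$, the identity $E^2 = 0$ together with $\ell^{2n} \equiv 0 \pmod{\ell^{n+1}}$ and the divisibility $\ell \mid \binom{\ell^n}{k}$ for $1 \leq k < \ell^n$ collapses the binomial expansion of $\tilde u^{\ell^n}$ to $I + \ell^n E \pmod{\ell^{n+1}}$, which is not $I$. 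This contradiction closes the induction and therefore the proof.
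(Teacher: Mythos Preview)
Your proof is correct and follows essentially the same route as the paper: both use Dickson's classification to obtain $\SL_2(\F_\ell) \subseteq G_\ell$, then lift to $\SL_2(\Z_\ell) \subseteq G$, and finish via $\det G = \Z_\ell^\times$. The only difference is that the paper cites Serre's lifting lemma (\cite[IV-23, Lemme 3]{MR1484415}, or \cite[Lemma 3.15]{MR3437765}) as a black box, whereas you re-prove it from scratch---first passing to the closure of $[G,G]$ to land inside $\SL_2(\Z_\ell)$, then running the standard Frattini-type induction using irreducibility of the adjoint representation of $\SL_2(\F_\ell)$ on $\mathfrak{sl}_2(\F_\ell)$ for $\ell \geq 5$; your self-contained argument is exactly how one proves Serre's lemma.
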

\begin{proof}
Since $\ell \mid \#G_\ell$, the classification of maximal subgroups of $\GL_2(\mathbb{F}_\ell)$ (Theorem \ref{thm:Dickson}) shows that either $G_\ell$ is contained in a Borel subgroup of $\GL_2(\F_\ell)$, or $G_\ell$ contains $\SL_2(\mathbb{F}_\ell)$. However, any subgroup of a Borel acts reducibly on $\F_\ell^2$ by definition, hence we see that $G_\ell$ contains $\SL_2(\mathbb{F}_\ell)$. By a lemma due to Serre (see \cite[IV-23, Lemme 3]{MR1484415} and \cite[Lemma 3.15]{MR3437765} for this exact version), this implies that $G$ contains $\SL_2(\mathbb{Z}_\ell)$. From $\det(G)=\Z_\ell^\times$ we then obtain $G=\GL_2(\Z_\ell)$ as desired.
\end{proof}

\begin{corollary}\label{cor:AllScalars}
Let $\ell > 2$ be a prime and let $G$ be a closed subgroup of $\GL_2(\mathbb{Z}_\ell)$ with $\det(G) = \mathbb{Z}_\ell^\times$. Suppose that (at least) one of the following holds:
\begin{enumerate}
\item $G_\ell \subseteq \GL_2(\mathbb{F}_\ell)$ contains (up to conjugacy) the normaliser of a split or non-split Cartan, and if $\ell \mid \#G_\ell$ then $\ell \neq 3$.
\item $\ell \equiv 2 \pmod 3$, and $G_\ell \subset \GL_2(\mathbb{F}_\ell)$ contains (up to conjugacy) the subgroup of cubes in the normaliser of a non-split Cartan.
\end{enumerate} 
Then $G$ contains $\mathbb{Z}_\ell^\times$.
\end{corollary}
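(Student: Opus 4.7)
The plan is to reduce each case to either Proposition~\ref{proposition:containsAllScalars}(2) or Lemma~\ref{lemma:SuperLift}, according to whether $\ell$ divides $\#G_\ell$. In both cases, the scalars $\mathbb{F}_\ell^\times \cdot \Id$ will already be visible inside $G_\ell$, so the real work is to lift them to $\mathbb{Z}_\ell^\times$; by Corollary~\ref{cor:LiftingScalars} and Lemma~\ref{lemma:TeichSplit} it is enough to add the subgroup $1 + \ell \mathbb{Z}_\ell$, which is exactly what Proposition~\ref{proposition:containsAllScalars} provides.

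For case (1), first suppose $\ell \mid \#G_\ell$; then by hypothesis $\ell \neq 3$, so $\ell \geq 5$. Since $G_\ell$ contains the normaliser of a Cartan, it acts irreducibly on $\mathbb{F}_\ell^2$ (a non-split Cartan has no $\mathbb{F}_\ell$-rational eigenline, while the two eigenlines of a split Cartan are swapped by its normaliser), and Lemma~\ref{lemma:SuperLift} forces $G = \GL_2(\mathbb{Z}_\ell) \supseteq \mathbb{Z}_\ell^\times$. If instead $\ell \nmid \#G_\ell$, after conjugating we may assume that $G_\ell$ contains a standardised normaliser $N_\ell^*(\varepsilon)$ from Remark~\ref{remark:newCartanModel}, and hence contains $\tau = \matr 0110$ and $\mathbb{F}_\ell^\times \cdot \Id$. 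We then invoke Proposition~\ref{proposition:containsAllScalars}(2) with $u = \matr{1}{0}{0}{-1} \in C_\ell^*(1)$ satisfying (1b) in the split case, and $u = \matr{\varepsilon}{-1}{1}{-\varepsilon} \in C_\ell^*(\varepsilon)$ satisfying (1a) in the non-split case; the latter has nonzero determinant $1 - \varepsilon^2$, since $\varepsilon = (\delta+1)/(\delta-1)$ cannot equal $\pm 1$.

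For case (2), I interpret ``the subgroup of cubes in the normaliser'' as $H := C^3 \rtimes \langle s \rangle$, the index-$3$ subgroup of the normaliser where $C \cong \mathbb{F}_{\ell^2}^\times$ is the (standardised) non-split Cartan, $C^3 = \{c^3 : c \in C\}$, and $s = \matr 0110$; here $C^3$ is $s$-invariant because $s$ acts on $C$ as Frobenius, which preserves cubes. After conjugating assume $G_\ell \supseteq H$ literally. Since $\gcd(3,\ell-1) = 1$ for $\ell \equiv 2 \pmod 3$, cubing is a bijection on $\mathbb{F}_\ell^\times$, so $\mathbb{F}_\ell^\times \cdot \Id \subseteq C^3 \subseteq G_\ell$. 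If $\ell \mid \#G_\ell$ then $G_\ell$ still acts irreducibly, because $C^3$ has order $(\ell^2-1)/3 > \ell-1$ and therefore contains a non-scalar element of the non-split Cartan, which acts without $\mathbb{F}_\ell$-eigenvectors; as $\ell \geq 5$ here, Lemma~\ref{lemma:SuperLift} applies again. Otherwise, we apply Proposition~\ref{proposition:containsAllScalars}(2) with $\tau = s \in H$ and the same candidate $u = \matr{\varepsilon}{-1}{1}{-\varepsilon}$ as in case (1).

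The crux, and the main obstacle, is to show that this $u$ lies in $C^3$ and not merely in $C$. Under the identification $C \cong \mathbb{F}_{\ell^2}^\times$, the element $u$ corresponds to some $\alpha$ with $\alpha^2 = \varepsilon^2 - 1 \in \mathbb{F}_\ell^\times$. Since $\ell$ is odd and $\ell \equiv 2 \pmod 3$, we have $\ell \equiv 5 \pmod 6$, so $(\ell+1)/6 \in \mathbb{Z}$, and Fermat's little theorem gives
\[
\alpha^{(\ell^2-1)/3} = (\alpha^2)^{(\ell-1)\cdot(\ell+1)/6} = (\varepsilon^2 - 1)^{(\ell-1)\cdot(\ell+1)/6} = 1,
\]
so $\alpha$ lies in the unique index-$3$ subgroup $C^3$ of $C$. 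This arithmetic step — showing that the natural $(1a)$-type element of the Cartan survives passage to the cube subgroup — is precisely where the congruence hypothesis $\ell \equiv 2 \pmod 3$ is needed.
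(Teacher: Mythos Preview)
Your proof is correct and follows the same overall architecture as the paper's: split according to whether $\ell \mid \#G_\ell$, then invoke Lemma~\ref{lemma:SuperLift} or Proposition~\ref{proposition:containsAllScalars}(2) after exhibiting $\tau$, a suitable $u$, and all scalars inside (a conjugate of) $G_\ell$. Your explicit identification of the ``subgroup of cubes in the normaliser'' with the index-$3$ subgroup $H=C^3\rtimes\langle s\rangle$ is the right one (indeed $N/C^3\cong S_3$ when $\ell\equiv 2\pmod 3$, so up to conjugacy there is a unique such subgroup), and your irreducibility check via $|C^3|>\ell-1$ is clean.

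The one place where you work harder than necessary is what you flag as ``the crux''. You prove $u=\matr{\varepsilon}{-1}{1}{-\varepsilon}\in C^3$ by computing the order of the corresponding $\alpha\in\mathbb{F}_{\ell^2}^\times$. The paper sidesteps this entirely: it simply takes $u^3$ instead of $u$. Since $u\in C$, trivially $u^3\in C^3\subseteq H$; and since $u^2=(\varepsilon^2-1)\Id$, one has $u^3=(\varepsilon^2-1)u$, which is again of the shape $\matr{a}{b}{-b}{-a}$ required by Remark~\ref{rmk:GoodElements}(1a). So the congruence $\ell\equiv 2\pmod 3$ is only needed to force the scalars into $C^3$ (cubing being bijective on $\mathbb{F}_\ell^\times$), not for the element $u$. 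Your arithmetic argument is correct and pleasant, but the paper's trick of replacing $u$ by $u^3$ makes the verification immediate. A second cosmetic difference: in the split case you use the diagonal $u=\matr{1}{0}{0}{-1}$ via condition~(1b), whereas the paper uses the anti-diagonal $u=\matr{0}{-1}{1}{0}$ via~(1a); both are equally valid.
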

\begin{proof}
Suppose first that $\ell \mid \#G_\ell$ (hence in particular $\ell>3$). The normaliser of a (split or non-split) Cartan, or an index-3 subgroup of a non-split Cartan, acts irreducibly on $\F_\ell^2$, so Lemma \ref{lemma:SuperLift} implies $G=\GL_2(\Z_\ell)$, which in particular contains $\Z_\ell^\times$.

Suppose on the other hand that $\ell \nmid \#G_\ell$. Notice that -- since the scalar matrices are contained in the centre of $\GL_2(\Z_\ell)$ -- the conclusion of Proposition \ref{proposition:containsAllScalars} is invariant under a change of basis for $\Z_\ell^2$, so it suffices to check that the group $G$ satisfies the hypotheses of Proposition \ref{proposition:containsAllScalars} after a suitable change of basis.
\begin{enumerate}
\item By what we already remarked, and up to conjugation in $\GL_2(\Z_\ell)$, we may assume that $G_\ell$ contains the group $N_\ell^*(\varepsilon)$ described in Remark \ref{remark:newCartanModel}, or an index-3 subgroup thereof. The explicit description shows that every group of the form $N_\ell^*(\varepsilon)$ contains $\matr 0110$; since this matrix is equal to its cube, $\matr 0110$ is also contained in the subgroup of cubes in $N_\ell^*(\varepsilon)$.

The normaliser of a split Cartan subgroup contains all anti-diagonal matrices, hence in particular it contains $u=\matr 0{-1}10$. The normaliser of a non-split Cartan contains $u=\matr{\varepsilon}{-1}{1}{-\varepsilon}$. Finally, the subgroup of cubes of such a normaliser contains $\matr{\varepsilon}{-1}{1}{-\varepsilon}^3=(\varepsilon^2-1) \matr{\varepsilon}{-1}{1}{-\varepsilon}$. In all cases we have thus shown that $G_\ell$ contains an element of the form required to apply Proposition \ref{proposition:containsAllScalars} (1), see Remark \ref{rmk:GoodElements}.
\item 
As for hypothesis (2) of Proposition \ref{proposition:containsAllScalars}, observe that all scalar matrices are contained in the normaliser of every (split or non-split) Cartan subgroup of $\GL_2(\F_\ell)$. When $\ell \equiv 2 \pmod{3}$ they are also contained in the subgroup of cubes of a non-split Cartan: indeed, in this case $x \mapsto x^3$ is an automorphism of $\mathbb{F}_\ell^\times$, so every scalar matrix is a cube.
\end{enumerate}

\end{proof}

\subsection{Scalars in the presence of an isogeny}
We now specialise to the case of $G = G_{\ell^\infty} \subseteq \GL_2(\Z_\ell)$ being the image of the $\ell$-adic representation attached to an elliptic curve $E/\mathbb{Q}$. Our aim is again to prove that $G$ contains an (explicitly identifiable) subgroup of $\Z_\ell^\times$.
 We begin by considering the case when $\ell \geq 7$ and $E$ admits an isogeny of degree $\ell$ defined over $\mathbb{Q}$. The relevant results are essentially already in the literature, and in this short section we reformulate them in the form needed for our applications.

\begin{definition}[{\cite[Definition 1.1]{Isogeny7}}]
An elliptic curve $E$ over $\mathbb{Q}$ is called \textbf{$\ell$-exceptional}, where $\ell$ is a prime, if $E$ has an isogeny of degree $\ell$ defined over $\mathbb{Q}$ and $G_{\ell^\infty}$ does not contain a Sylow pro-$\ell$ subgroup of $\GL_2(\mathbb{Z}_\ell)$.
\end{definition}
Combining \cite[Theorem 1]{Isogeny} with \cite[Remark 4.2.1]{Isogeny} and \cite[Theorem 5.5]{Isogeny7} one obtains:
\begin{theorem}\label{thm:Isogeny}
Let $\ell \geq 7$ be a prime. There are no non-CM $\ell$-exceptional elliptic curves defined over $\mathbb{Q}$.
\end{theorem}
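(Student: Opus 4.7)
My plan is to bootstrap from Mazur's theorem (Theorem \ref{thm:Mazur}) and then invoke the cited classification results, since the assertion is essentially a compilation of what is known about the few primes that survive Mazur.

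First, suppose $E/\mathbb{Q}$ is a non-CM elliptic curve admitting a rational isogeny of prime degree $\ell \geq 7$. By Theorem \ref{thm:Mazur}, the non-CM hypothesis forces $\ell \in \mathcal{T}_0 = \{2,3,5,7,11,13,17,37\}$, so combined with $\ell \geq 7$ only the finite list $\ell \in \{7,11,13,17,37\}$ remains. Thus it suffices to show that for every such prime $\ell$ and every non-CM elliptic curve $E/\mathbb{Q}$ admitting a rational $\ell$-isogeny, the image $G_{\ell^\infty}$ contains a Sylow pro-$\ell$ subgroup of $\GL_2(\mathbb{Z}_\ell)$.

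I would then invoke, prime by prime, the relevant classification in the literature. The case $\ell=7$ is handled by \cite[Theorem 5.5]{Isogeny7}, which gives a complete description of the possible $7$-adic images of Galois attached to non-CM elliptic curves over $\mathbb{Q}$ with a rational $7$-isogeny; inspecting the list one reads off that in every case $G_{7^\infty}$ contains a Sylow pro-$7$ subgroup of $\GL_2(\mathbb{Z}_7)$. For $\ell \in \{11,13,17,37\}$, the analogous assertion follows from \cite[Theorem 1]{Isogeny} together with the refinement \cite[Remark 4.2.1]{Isogeny}, which in each case shows that the index of $G_{\ell^\infty}$ in $\GL_2(\mathbb{Z}_\ell)$ is coprime to $\ell$, hence a Sylow pro-$\ell$ is present.

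The only real work is bookkeeping: one must verify that the cited classifications exhaust precisely the primes produced by Mazur's list, and that the explicit descriptions of the possible images indeed guarantee the presence of a full Sylow pro-$\ell$ subgroup. Both verifications are immediate from the statements of the cited theorems (in the sense that they are phrased in terms of index in $\GL_2(\mathbb{Z}_\ell)$), so no further argument is needed; the substantive content lies entirely in the deep isogeny classification theorems being invoked.
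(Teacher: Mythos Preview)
Your proposal is correct and matches the paper's approach: the paper does not give a proof at all, but simply states that the theorem follows by combining \cite[Theorem 1]{Isogeny}, \cite[Remark 4.2.1]{Isogeny}, and \cite[Theorem 5.5]{Isogeny7}. Your write-up is in fact slightly more detailed, since you make explicit the preliminary reduction via Mazur's theorem to the list $\ell\in\{7,11,13,17,37\}$ before invoking exactly the same three references.
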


For the case $\ell=5$ we instead rely on the following result:
\begin{theorem}[{\cite[Theorem 2]{Isogeny}}]\label{thm:Isogeny5}
Let $E/\Q$ be a non-CM elliptic curve.
Suppose that $E$ has an isogeny of degree 5 defined over $\Q$. If none of the elliptic curves in the $\Q$-isogeny class of $E$ has two independent isogenies of degree 5, then $E$ is not $5$-exceptional. Otherwise, the index $[\GL_2(\Z_5) : G_{5^\infty}]$ is divisible by 5, but not by $25$.
\end{theorem}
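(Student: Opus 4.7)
The plan is to exploit the reducibility of $\rho_5$ forced by the existence of a rational $5$-isogeny: up to a change of basis,
\[
\rho_{5^\infty}(\sigma) = \matr{\chi_1(\sigma)}{c(\sigma)}{0}{\chi_2(\sigma)},
\]
where $\chi_1, \chi_2 \colon \Gal(\overline{\Q} \mid \Q) \to \Z_5^\times$ are continuous characters with $\chi_1 \chi_2 = \chi_{5^\infty}$ (the $5$-adic cyclotomic character), and $c$ is a $1$-cocycle whose cohomology class $[c] \in H^1(G_\Q, \Z_5(\chi_1/\chi_2))$ measures the failure of $\rho_{5^\infty}$ to split as a direct sum of characters. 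The existence of two independent rational $5$-isogenies on $E$ is equivalent to $[c]$ being trivial modulo $5$, i.e.\ to $\rho_5$ being diagonalisable. On the target side, a Sylow pro-$5$ subgroup of $\GL_2(\Z_5)$ is (up to conjugacy) the group $\{\matr{a}{b}{0}{d} : a,d \in 1+5\Z_5,\ b \in \Z_5\}$, that is, the largest pro-$5$ subgroup of a Borel; so the conclusion amounts to showing either that $G_{5^\infty}$ contains such a group, or that it misses only an index-$5$ subgroup of it.

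Next I would traverse the isogeny graph at $5$. A rational $5$-isogeny $E \to E'$ corresponds to quotienting by the stable line in $E[5]$, and at the level of Galois representations it acts by conjugation with a matrix of the form $\matr{5}{0}{0}{1}$; this modifies both the characters $\chi_1, \chi_2$ and the cocycle class $[c]$. The statement that some $E'$ in the $\Q$-isogeny class has two independent $5$-isogenies translates to $[c]$ becoming trivial modulo $5$ after the appropriate sequence of such conjugations. In the first case of the theorem (no such $E'$ exists), $[c]$ remains non-trivial all along the isogeny graph, and I would use commutator arguments in the spirit of Proposition~\ref{proposition:containsAllScalars}, combined with Corollary~\ref{cor:LiftingScalars} applied to the diagonal (using that $\chi_1 \chi_2 = \chi_{5^\infty}$ is surjective onto $\Z_5^\times$), to conclude that the entire pro-$5$ unipotent part of the Borel lies in $G_{5^\infty}$. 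In the second case I would first pass to a curve $E'$ with two independent $5$-isogenies, and carry out the same sort of argument; the sharper index statement would come from a local analysis at $5$ (using that $\rho_{5^\infty}$ is crystalline with Hodge--Tate weights $0,1$ when $E$ has good reduction) showing that the obstruction to diagonalising $\rho_{25}^{E'}$ lies in a $1$-dimensional $\F_5$-space.

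The main obstacle is the global control of the cocycle class $[c]$ along the whole isogeny graph, which in turn requires classifying the admissible pairs $(\chi_1, \chi_2)$. These are Galois characters of $\Q$ ramified only at $5$ and at the primes of bad reduction, whose product is the cyclotomic character; pinning them down with enough precision is essentially equivalent to understanding the rational points on the modular curves $X_0(25)$ and $X_0(125)$. The proof in \cite{Isogeny} carries out this delicate analysis, leveraging Mazur's isogeny theorem together with an explicit description of these modular curves.
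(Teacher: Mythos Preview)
This theorem is not proved in the paper at all: it is quoted verbatim from \cite[Theorem 2]{Isogeny} and used as a black box (specifically, in the proof of Corollary~\ref{cor:ScalarsReducibleCase}). There is therefore nothing in the paper to compare your attempt against.

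That said, your sketch is a reasonable high-level outline of the kind of argument Greenberg uses in \cite{Isogeny}: one does work inside a Borel, track the diagonal characters and the extension class along the $5$-isogeny graph, and ultimately reduce to arithmetic input about $X_0(25)$ and related modular curves. But as written it is only a plan, not a proof: the key steps --- controlling the cocycle class globally, classifying the admissible character pairs, and the ``local analysis at $5$'' you allude to for the second case --- are exactly where the work lies, and you defer all of them to the cited reference. Since the paper itself also defers the entire statement to that reference, the appropriate thing here is simply to cite \cite[Theorem 2]{Isogeny} rather than to attempt an independent proof.
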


\begin{corollary}\label{cor:ScalarsReducibleCase}
Let $E/\mathbb{Q}$ be a non-CM elliptic curve, let $\ell \geq 5$ be a prime number, and suppose that the Galois module $E[\ell]$ is reducible. Then $G_{\ell^\infty}$ contains $1+\ell \mathbb{Z}_\ell$.
\end{corollary}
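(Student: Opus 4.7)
The strategy is to show that in every case the central matrix $(1+\ell)\Id$ lies in $G_{\ell^\infty}$; Lemma \ref{lemma:ScalarsValuationOne}, applied to the closed subgroup $G_{\ell^\infty}\cap\Z_\ell^\times$ of $\Z_\ell^\times$, will then conclude since $v_\ell((1+\ell)-1)=1$. I split the argument according to whether $\ell\geq 7$ or $\ell=5$.

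For $\ell\geq 7$, I invoke Theorem \ref{thm:Isogeny}: the non-CM curve $E$ is not $\ell$-exceptional, so $G_{\ell^\infty}$ contains some Sylow pro-$\ell$ subgroup of $\GL_2(\Z_\ell)$. The kernel $K:=\ker(\GL_2(\Z_\ell)\to\GL_2(\F_\ell))$ is a normal pro-$\ell$ subgroup of $\GL_2(\Z_\ell)$ and so is contained in every Sylow pro-$\ell$; in particular $(1+\ell)\Id\in K\subseteq G_{\ell^\infty}$.

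For $\ell=5$, I apply Theorem \ref{thm:Isogeny5}. If $E$ is not $5$-exceptional the previous argument applies verbatim, so I may assume that the $\Q$-isogeny class of $E$ contains a curve $E'$ admitting two independent rational $5$-isogenies, which forces $G_5(E')$ to lie in a split Cartan subgroup of $\GL_2(\F_5)$. Since the hypothesis of Theorem \ref{thm:Isogeny5} depends only on the isogeny class, applying it to $E'$ yields $v_5([\GL_2(\Z_5):G_{5^\infty}(E')])=1$. Using the factorisation
\[
[\GL_2(\Z_5):G_{5^\infty}(E')]=[\GL_2(\F_5):G_5(E')]\cdot[K:K\cap G_{5^\infty}(E')]
\]
together with the estimate $v_5([\GL_2(\F_5):G_5(E')])\geq v_5(30)=1$ (the split Cartan has index $30$ in $\GL_2(\F_5)$), I deduce $v_5([K:K\cap G_{5^\infty}(E')])=0$. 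Since $K$ is pro-$5$, this index is a power of $5$, so it must equal $1$; hence $K\subseteq G_{5^\infty}(E')$, and in particular $(1+5)\Id\in G_{5^\infty}(E')$. Because $E$ and $E'$ are $\Q$-isogenous, $G_{5^\infty}(E)$ and $G_{5^\infty}(E')$ are conjugate inside $\GL_2(\Q_5)$, and since scalar matrices are central in $\GL_2(\Q_5)$ they are fixed by any such conjugation; thus $(1+5)\Id\in G_{5^\infty}(E)$ as well.

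The main obstacle is the $\ell=5$ case in which $E$ is $5$-exceptional: here $G_{5^\infty}$ fails to contain a full Sylow pro-$5$ of $\GL_2(\Z_5)$, so the short argument used for $\ell\geq 7$ does not apply. The crucial trick is to replace $E$ with an isogenous curve whose mod-$5$ image already lies in a split Cartan: the split Cartan absorbs the whole $5$-part of the $5$-adic index, which forces the entire kernel of reduction modulo $5$ into the Galois image, and isogeny invariance of the scalar subgroup transfers the conclusion back to $E$.
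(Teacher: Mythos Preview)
Your proof is correct and follows essentially the same approach as the paper's: reduce to showing $(1+\ell)\Id\in G_{\ell^\infty}$, use non-exceptionality (Theorems \ref{thm:Isogeny} and \ref{thm:Isogeny5}) to get a Sylow pro-$\ell$ subgroup inside $G_{\ell^\infty}$ when possible, and in the residual $\ell=5$ case pass to an isogenous curve whose mod-$5$ image sits in a split Cartan so that the single factor of $5$ in the global index is already accounted for at the mod-$5$ level. The only cosmetic differences are that the paper argues $1+\ell\Z_\ell$ lies in every Sylow via centrality (rather than your observation that the full kernel $K$ is a normal pro-$\ell$ subgroup), and that the paper invokes the isogeny-invariance of $G_{\ell^\infty}\cap\Z_\ell^\times$ as a black box rather than spelling out the $\GL_2(\Q_\ell)$-conjugacy.
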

\begin{proof}
A specific Sylow pro-$\ell$ subgroup $S$ of $\GL_2(\mathbb{Z}_\ell)$ is given by 
\[
S = \set{ M = \matr abcd \in \GL_2(\mathbb{Z}_\ell) \bigm\vert a \equiv d \equiv 1 \pmod \ell, \; c \equiv 0 \pmod{\ell}  }.
\]
It is clear that $1+\ell \mathbb{Z}_\ell$ is contained in $S$. However, since all the pro-$\ell$ Sylow subgroups of $\GL_2(\mathbb{Z}_\ell)$ are conjugated to each other and $1+\ell \mathbb{Z}_\ell$ lies in the center of $\GL_2(\mathbb{Z}_\ell)$ (hence is stable under conjugation), it follows that $1+\ell \mathbb{Z}_\ell$ is contained in \textit{all} the Sylow pro-$\ell$ subgroups of $\GL_2(\mathbb{Z}_\ell)$. For $\ell \geq 7$ the statement then becomes a direct consequence of Theorem \ref{thm:Isogeny}. For $\ell=5$ the claim similarly follows from Theorem \ref{thm:Isogeny5} if no elliptic curve in the $\Q$-isogeny class of $E$ admits two independent $5$-isogenies. To treat this last case, observe that the intersection $G_{\ell^\infty} \cap \Z_\ell^\times$ is the same for all the elliptic curves in a given $\Q$-isogeny class (see e.g.~\cite[§2.4]{Isogeny}), so we may assume that $E$ admits two independent $5$-isogenies defined over $\Q$. In particular, the Galois module $E[5]$ decomposes as the direct sum of two 1-dimensional modules, which implies that in a suitable basis $G_5$ consists of diagonal matrices. Hence $[\GL_2(\F_5) : G_5]$ is divisible by $5$, and on the other hand $25 \nmid [\GL_2(\Z_5) : G_{5^\infty}]$ by Theorem \ref{thm:Isogeny5} again. It follows immediately that $\ker(\GL_2(\Z_5) \to \GL_2(\F_5))$, which is a pro-$5$ group, is entirely contained in $G_{5^\infty}$, hence in particular that $1+5\Z_5 \subseteq G_{5^\infty}$, as desired.
\end{proof}

\subsection{The $3$-adic case}\label{subsect:3adicScalars}
Let $E/\Q$ be a non-CM elliptic curve. Relying on the group-theoretic results of Appendix \ref{sect:pAdicAppendix} we now prove that the $3$-adic Galois representation attached to $E$ contains all scalars congruent to 1 modulo 27. We treat separately the two cases when the Galois module $E[3]$ is respectively irreducible or reducible.

\subsubsection{Irreducible case} When $E[3]$ is irreducible for the Galois action, it is not hard to prove that $G_{3^\infty}$ contains all scalars:
\begin{proposition}\label{prop:3adicIrreducible}
Suppose $E[3]$ is an irreducible Galois module. Then $G_{3^\infty}$ contains $\Z_3^\times$.
\end{proposition}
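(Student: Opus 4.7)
The plan is to show $\Z_3^\times\subseteq G_{3^\infty}$ by combining Dickson's classification (Theorem \ref{thm:Dickson}) applied to $G_3$ with the group-theoretic criteria developed earlier in this section. At the outset I would record two standard facts: the cyclotomic character yields $\det(G_{3^\infty})=\Z_3^\times$, and the image of complex conjugation in $G_3$ is an element of order $2$ and determinant $-1$ (in a suitable basis, the matrix $\matr{0}{1}{1}{0}$).

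If $3\mid\#G_3$, the irreducibility of $E[3]$ rules out the Borel alternative in Theorem \ref{thm:Dickson}, so $G_3\supseteq\SL_2(\F_3)$; combined with the surjective determinant this upgrades to $G_3=\GL_2(\F_3)$. The $3$-adic refinement of this situation is precisely what the auxiliary result of Appendix \ref{sect:pAdicAppendix} is designed for: starting from $G_3=\GL_2(\F_3)$ and $\det(G_{3^\infty})=\Z_3^\times$, it produces $\Z_3^\times\subseteq G_{3^\infty}$.

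If $3\nmid\#G_3$, Theorem \ref{thm:Dickson} confines $G_3$ to the normaliser $N$ of a Cartan subgroup $C$ of $\GL_2(\F_3)$ (since $\PGL_2(\F_3)\cong S_4$ and the coprimality condition $3\nmid\#G_3$ eliminates the genuinely exceptional projective images). Irreducibility of $E[3]$ then forbids $G_3\subseteq C$: for a split $C$ this is immediate, while for a non-split $C$ it follows from the fact that the cyclic group $C_{\text{ns}}$ of order $8$ has $-\Id$ as its only element of order $2$ (of determinant $+1$), whereas complex conjugation has determinant $-1$. After conjugating $N$ into the explicit form of Remark \ref{remark:newCartanModel}, I would verify that $G_3$ contains both $\tau=\matr{0}{1}{1}{0}$ and an auxiliary element $u$ of the type described in Remark \ref{rmk:GoodElements}, so that the hypotheses of Proposition \ref{proposition:containsAllScalars} (or directly Corollary \ref{cor:AllScalars}(1), in the frequent case $G_3\supseteq N$) are satisfied. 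The desired inclusion $\Z_3^\times\subseteq G_{3^\infty}$ follows.

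The hard part will be the appendix result invoked in Case 1: unlike Lemma \ref{lemma:SuperLift}, which requires $\ell\geq 5$, for $\ell=3$ there is no shortcut that upgrades $G_3=\GL_2(\F_3)$ directly to $G_{3^\infty}=\GL_2(\Z_3)$ (indeed such an upgrade can fail), so one genuinely has to perform a more delicate $3$-adic analysis -- finding a scalar $\lambda\Id\in G_{3^\infty}$ with $v_3(\lambda-1)=1$ and then closing up via Lemma \ref{lemma:ScalarsValuationOne} -- to extract the scalars. By contrast, Case 2 amounts to a routine verification of the hypotheses of the general criteria of Section \ref{subsect:GroupTheoreticCriteria}.
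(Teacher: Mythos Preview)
Your Case 1 misreads the role of Appendix \ref{sect:pAdicAppendix}. Proposition \ref{prop:padicscalars} applies to a closed pro-$p$ subgroup $H\subseteq\GL_2(\Z_p)$ whose reduction $H_p$ has order exactly $p$, and its output is only that $H$ contains the scalars congruent to $1$ modulo $p^k$. When $G_3=\GL_2(\F_3)$ the group $G_{3^\infty}$ is not pro-$3$ and its mod-$3$ reduction has order $48$; even passing to a $3$-Sylow one would have to verify normality (which fails, since $\GL_2(\F_3)$ has four $3$-Sylows), the determinant condition, and the value of $k$ --- and the conclusion would at best be $1+3^k\Z_3$, not $\Z_3^\times$. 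In the paper the appendix is invoked only in the \emph{reducible} subcase, where the $3$-Sylow of $G_{3^\infty}$ is automatically normal and the weaker conclusion $1+27\Z_3\subseteq G_{3^\infty}$ is all that is claimed.

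The paper's proof of the irreducible case is uniform and does not split on whether $3\mid\#G_3$. After conjugating complex conjugation to $\matr{1}{0}{0}{-1}$, a short direct enumeration shows that only three subgroups of $\GL_2(\F_3)$ satisfy the constraints (irreducible action, surjective determinant, containing this involution), and each of them contains the order-$8$ group $H=\langle\matr{0}{1}{1}{0},\matr{0}{-1}{1}{0}\rangle$. This $H$ already provides $\tau$, the element $u=\matr{0}{-1}{1}{0}$ of Remark \ref{rmk:GoodElements}(1a), and $\F_3^\times=\{\pm\Id\}$, so Proposition \ref{proposition:containsAllScalars}(2) yields $\Z_3^\times\subseteq G_{3^\infty}$ in every case at once. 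Your Case 2 moves in this direction but stops short: from $G_3\subseteq N$ and $G_3\not\subseteq C$ alone one cannot conclude that $G_3$ contains $\tau$ and a suitable $u$; the finite enumeration (or an equivalent argument pinning $G_3$ down up to conjugacy) is what actually supplies these elements.
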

\begin{proof}
Up to conjugation, we can assume that $G_3$ contains $\begin{pmatrix}
1 & 0 \\ 0 & -1
\end{pmatrix}$ (the image of complex conjugation). A short direct computation shows that (up to conjugacy) there are only 3 possibilities for $G_3$, namely $\GL_2(\F_3)$, a 2-Sylow subgroup, or the group $H := \langle \begin{pmatrix}
0 & 1 \\ 1 & 0
\end{pmatrix}, \begin{pmatrix}
0 & -1 \\
1 & 0
\end{pmatrix} \rangle$ of order 8. In particular, in all cases we may assume that $H \subseteq G_3$. 
The hypotheses of Proposition \ref{proposition:containsAllScalars} (2) are then satisfied, hence $G_{3^\infty}$ contains $\Z_3^\times$.
\end{proof}

\subsubsection{Reducible case}
We now consider the much harder case when $E[3]$ is reducible under the Galois action.
Our analysis is based on the purely group-theoretic Proposition \ref{prop:padicscalars}. To motivate the hypotheses that appear in its statement, we consider a non-CM elliptic curve $E/\Q$ for which the Galois module $E[3]$ is reducible, and denote as usual by $G_{3^n}$ the image of the modulo-$3^n$ representation attached to $E/\Q$ and by $G_{3^\infty}$ the image of the $3$-adic representation. The following hold:
\begin{enumerate}
\item Any elliptic curve $\tilde{E}/\Q$ that is $\Q$-isogenous to $E$ gives rise to a $3$-adic Galois image $\tilde{G}_{3^\infty}$ for which $G_{3^\infty} \cap \Z_3^\times = \tilde{G}_{3^\infty} \cap \Z_{3}^\times$ (notice that this equality is independent of the choice of basis for $T_3 E, T_3 \tilde{E}$), see for example \cite[§2.4]{Isogeny}. For all such curves $\tilde{E}/\Q$, the Galois module $\tilde{E}[3]$ is clearly reducible, and at least one $\tilde{E}$ of this form does not admit two independent cyclic isogenies of degree 3 defined over $\Q$. Hence, up to replacing $E$ with $\tilde{E}$, we may assume that $G_3$ is contained (up to conjugacy) in a Borel subgroup and that $G_3$ only fixes one nontrivial $\mathbb{F}_3$-subspace of $E[3]$. This implies $3 \mid \#G_3$.
\item $G_{27}$ acts on $E[27]$ without fixing any cyclic subgroup of order 27. Indeed, the three rational points on $X_0(27)$ are two cusps and a single non-cuspidal point corresponding to a CM elliptic curve \cite[p.~229]{MR0337974}.
\item $\det(G_{3^\infty})=\Z_3^\times$: as already discussed, this follows from the surjectivity of the $3$-adic cyclotomic character.
\item $G_{3^\infty}$ contains the image of (any) complex conjugation, which is an element $c$ of order 2 with determinant $-1$.
\end{enumerate}

We now check that these information are sufficient to apply Proposition \ref{prop:padicscalars}. Up to a change of basis, we may assume that the element $c \in G_{3^\infty}$ is represented by the matrix $C=\matr 100{-1}$. This easily implies that $G_3$ is contained in the Borel of upper- or lower-triangular matrices (see also Remark \ref{rmk:MatrixD}).
Take now $H$ to be pro-$3$ Sylow subgroup of $G_{3^\infty}$ (which is normal, hence unique: it is the inverse image in $G_{3^\infty}$ of the $3$-Sylow of $G_3$, which is easily checked to be normal). We claim that this group satisfies all the assumptions of Proposition \ref{prop:padicscalars} with $p=3$ and $k=3$. Hypothesis (1) is satisfied by (1) above. 
 Hypothesis (3) is clear from the equality $\det(G_{3^\infty})=\Z_3^\times$, and (4) follows from the fact that $C \in G_{3^\infty}$ and $H$ is normal in $G_{3^\infty}$. As for (2), 
recall that $G_3$ is contained in the upper- or lower-triangular Borel subgroup, and this implies easily that $G_{3^\infty}$ is generated by $H$, $C$, and possibly $-\Id$. Since both $C$ and $-\Id$ are diagonal, we see that if $H_{3^3}$ is upper- or lower- triangular, then so is $G_{3^3}$, contradiction, because we know that $E$ does not admit any cyclic 27-isogeny defined over $\Q$. Hence from Proposition \ref{prop:padicscalars} we obtain:
\begin{proposition}
Let $E/\Q$ be a non-CM elliptic curve for which $E[3]$ is a reducible Galois module. Then $G_{3^\infty}$ contains all scalars congruent to $1$ modulo 27.
\end{proposition}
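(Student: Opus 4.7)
My plan is to apply Proposition \ref{prop:padicscalars} from the appendix with $p=3$ and $k=3$, using as $H$ the (unique) pro-$3$ Sylow subgroup of $G_{3^\infty}$. Essentially all the preparatory work has already been done in the discussion preceding the statement: what remains is to assemble the verifications into a coherent argument and invoke the appendix.

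First I would observe that replacing $E$ by a well-chosen $\Q$-isogenous curve does not change the intersection $G_{3^\infty}\cap\Z_3^\times$ (this is the content of \cite[§2.4]{Isogeny}, already cited in point (1) above the statement), so I may assume that $G_3$ fixes a unique nontrivial $\F_3$-subspace of $E[3]$. Under this assumption $G_3$ is strictly contained in a Borel subgroup and contains a nontrivial unipotent element (since it is reducible but not diagonalisable), which gives $3\mid \#G_3$. I then let $H$ be the pro-$3$ Sylow of $G_{3^\infty}$; it is the preimage of the (normal) $3$-Sylow of $G_3$, and in particular it is normal in $G_{3^\infty}$.

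Next I would verify the four hypotheses of Proposition \ref{prop:padicscalars} in turn. Hypothesis (1) is the divisibility $3\mid\#G_3$ just noted. Hypothesis (3), $\det(G_{3^\infty})=\Z_3^\times$, is immediate from surjectivity of the $3$-adic cyclotomic character. For hypothesis (4) I use the image $C$ of complex conjugation, which lies in $G_{3^\infty}$, has order $2$ and determinant $-1$, and normalises $H$ because $H$ is normal in $G_{3^\infty}$; after a change of basis I may assume $C=\matr 100{-1}$ as in Remark \ref{rmk:MatrixD}, putting $G_3$ in upper- or lower-triangular form as required.

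The main obstacle, and the only step that is not purely formal, is hypothesis (2): that $G_{3^3}=G_{27}$ fixes no cyclic subgroup of order $27$ in $E[27]$. This is equivalent to saying that $E$ does not admit a $\Q$-rational cyclic isogeny of degree $27$. I would deduce this from the fact that $X_0(27)(\Q)$ consists of two cusps and a single non-cuspidal point, which corresponds to a CM elliptic curve (see \cite[p.~229]{MR0337974}); the non-CM hypothesis on $E$ then rules out the remaining point. With hypotheses (1)--(4) all confirmed, Proposition \ref{prop:padicscalars} yields that $H$ (hence $G_{3^\infty}$) contains every scalar congruent to $1$ modulo $3^3=27$, which is the desired conclusion.
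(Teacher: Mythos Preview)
Your proposal follows essentially the same route as the paper: reduce via isogeny so that $G_3$ has a unique fixed line, take $H$ to be the normal pro-$3$ Sylow, diagonalise complex conjugation, and apply Proposition \ref{prop:padicscalars} with $p=3$, $k=3$, using the rational points on $X_0(27)$ to rule out a cyclic $27$-isogeny.

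One small slip to flag: hypothesis (2) of Proposition \ref{prop:padicscalars} concerns $H_{27}$, not $G_{27}$, so the statement ``$G_{27}$ fixes no cyclic subgroup of order $27$'' is not literally what you must verify. The paper bridges this by observing that, in the chosen basis, $G_{3^\infty}$ is generated by $H$ together with the diagonal elements $C$ and (possibly) $-\Id$; hence if $H_{27}$ were upper- or lower-triangular, so would $G_{27}$ be, contradicting the absence of a rational $27$-isogeny. You should make this passage from $G_{27}$ to $H_{27}$ explicit. Similarly, hypothesis (3) asks for $\det(H)=1+3\Z_3$ rather than $\det(G_{3^\infty})=\Z_3^\times$; the deduction is easy (since $G_{3^\infty}/H$ has order prime to $3$), but strictly speaking it is a separate step.
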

Combining this result with Proposition \ref{prop:3adicIrreducible} we have then proved:
\begin{corollary}\label{cor:3adic}
Let $E/\Q$ be a non-CM elliptic curve. The group $G_{3^\infty}$ contains all scalars congruent to 1 modulo 27.
\end{corollary}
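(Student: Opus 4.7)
The statement is an immediate synthesis of the two preceding propositions, so the plan is merely to combine them and check that the claim $1+27\Z_3 \subseteq G_{3^\infty}$ is covered in both cases of the dichotomy ``$E[3]$ irreducible vs. reducible as a Galois module''.

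The plan is to split according to the Galois-module structure of $E[3]$. If $E[3]$ is irreducible, then by Proposition \ref{prop:3adicIrreducible} the group $G_{3^\infty}$ contains the full subgroup $\Z_3^\times$ of scalars, and a fortiori contains $1+27\Z_3$. If $E[3]$ is reducible, the conclusion is exactly the content of the immediately preceding proposition, which says that $G_{3^\infty}$ already contains all scalars congruent to $1$ modulo $27$. In either case we obtain $1+27\Z_3 \cdot \Id \subseteq G_{3^\infty}$, which is the desired conclusion.

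There is no genuine obstacle at this stage: the corollary is a packaging step whose substance was supplied earlier. The real work lies in the reducible case (handled via the appendix result Proposition \ref{prop:padicscalars}, together with the verification that its hypotheses are met using Mazur-type information about rational points on $X_0(27)$ and the invariance of $G_{3^\infty}\cap \Z_3^\times$ under $\Q$-isogeny), and in the short irreducible case argument via Proposition \ref{proposition:containsAllScalars}. Once these are in hand, no further computation is required.

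For the write-up I would simply state: ``If $E[3]$ is an irreducible $\Gal(\overline{\Q}/\Q)$-module, Proposition \ref{prop:3adicIrreducible} yields $\Z_3^\times \subseteq G_{3^\infty}$; otherwise the preceding proposition gives $1+27\Z_3 \subseteq G_{3^\infty}$. Since $1+27\Z_3 \subseteq \Z_3^\times$ in either case, the conclusion follows.'' No further lemmas, no case analysis beyond this dichotomy, and no additional hypotheses to verify.
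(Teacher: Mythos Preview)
Your proposal is correct and matches the paper's own argument exactly: the corollary is stated immediately after the reducible-case proposition with the words ``Combining this result with Proposition \ref{prop:3adicIrreducible} we have then proved'', so the dichotomy you describe is precisely the intended proof.
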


\begin{remark} \label{rem:RouseExponent3}
	J. Rouse informed us that 
	for
	every non-CM elliptic curve over $\Q$ with a rational $3$-isogeny the group $G_{3^\infty}$
	contains all scalars congruent to $1$ modulo $9$ (hence, by Proposition \ref{prop:3adicIrreducible}, the same holds for every non-CM $E/\mathbb{Q}$).
	This will be shown in upcoming work by Rouse, Sutherland and Zureick-Brown.
	Their proof relies on the explicit determination of the rational points of suitable modular curves. As pointed out in the introduction,
	we think our approach -- that derives the result from properties of
	isogenies (hence relying only on the more well-studied modular curves $X_0(N)$) -- has the advantage of being easier to extend to number fields
	different from $\Q$.
\end{remark}

\subsection{Main theorem}
We are now ready to prove our uniform result for scalars in the image of Galois representations:
\begin{theorem}\label{thm:ScalarsImageGalois}
Let $E$ be a non-CM elliptic curve over $\mathbb{Q}$ and let $\ell$ be a prime number. Define

\[
s_\ell := \begin{cases}
4, \text{ if } \ell=2 \\
3, \text{ if } \ell=3 \\
1, \text{ if } \ell=5, 7, 11, 13, 17, 37 \\
0, \text{ if } \ell \geq 19 \text{ and } \ell \neq 37
\end{cases}
\]
The image $G_{\ell^\infty}$ of the $\ell$-adic Galois representation attached to $E/\mathbb{Q}$ contains all scalars congruent to $1$ modulo $\ell^{s_\ell}$.
\end{theorem}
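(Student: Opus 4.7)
The plan is to treat each prime $\ell$ separately, combining the group-theoretic criteria of Subsection \ref{subsect:GroupTheoreticCriteria} with the classification of possible mod-$\ell$ Galois images $G_\ell$ for non-CM elliptic curves over $\Q$. The hypothesis $\det(G_{\ell^\infty}) = \Z_\ell^\times$ needed to apply most of these criteria holds automatically, since this determinant is the $\ell$-adic cyclotomic character. The cases $\ell=2$ and $\ell=3$ are immediate: the former comes from \cite{MR3500996}, and the latter is exactly Corollary \ref{cor:3adic}.

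For $\ell \geq 5$, I would split according to the reducibility of the Galois module $E[\ell]$. In the reducible case, Corollary \ref{cor:ScalarsReducibleCase} directly yields $1+\ell\Z_\ell \subseteq G_{\ell^\infty}$, matching $s_\ell = 1$ for $\ell \in \{5,7,11,13,17,37\}$. For the larger primes $\ell \geq 19$ with $\ell \neq 37$, Mazur's theorem (Theorem \ref{thm:Mazur}) combined with the non-CM hypothesis forces $E[\ell]$ to be irreducible, so this sub-case never arises.

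In the irreducible case, I would appeal to Dickson's classification (Theorem \ref{thm:Dickson}). Irreducibility rules out the Borel possibility, leaving three alternatives: $G_\ell$ contains $\SL_2(\F_\ell)$, is contained in the normaliser of a (split or non-split) Cartan subgroup, or is contained in an exceptional subgroup. In the first alternative $\ell$ divides $\#G_\ell$, and Lemma \ref{lemma:SuperLift} gives $G_{\ell^\infty} = \GL_2(\Z_\ell) \supseteq \Z_\ell^\times$. Otherwise, the exceptional possibilities are eliminated for non-CM $E/\Q$ by the classification results of \cite[Proposition 1.13]{PossibleImages} and \cite[Appendix B]{PedroSamuel}; these references also pin down the structure of $G_\ell$ in the remaining Cartan-normaliser case well enough to invoke Corollary \ref{cor:AllScalars} and conclude $\Z_\ell^\times \subseteq G_{\ell^\infty}$.

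The main technical obstacle is reconciling directions of inclusion: Dickson gives only $G_\ell \subseteq N_\ell$, whereas the criterion (Corollary \ref{cor:AllScalars}) demands that $G_\ell$ \emph{contain} a Cartan normaliser (or, when $\ell \equiv 2 \pmod 3$, the cube-subgroup of a non-split Cartan normaliser). Closing this gap cannot be done via Dickson alone: it requires the finer structural information on $\ell$-adic images of non-CM $E/\Q$ provided by the classification results cited above, used prime-by-prime for the finitely many $\ell \in \mathcal{T}_0$ that remain to be handled.
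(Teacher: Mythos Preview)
Your overall architecture matches the paper's proof: $\ell=2,3$ by citation, then for $\ell\ge 5$ split into reducible (Corollary \ref{cor:ScalarsReducibleCase}) versus irreducible, and in the irreducible case use Lemma \ref{lemma:SuperLift} when $\ell\mid\#G_\ell$. The paper proceeds exactly this way.

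However, there is a genuine error in your treatment of the irreducible, $\ell\nmid\#G_\ell$ case. You assert that ``the exceptional possibilities are eliminated for non-CM $E/\Q$'' by \cite[Proposition 1.13]{PossibleImages}. This is false at $\ell=13$: the exceptional subgroup $G_{S_4}\subset\GL_2(\F_{13})$ (with projective image $S_4$) \emph{can} occur as $G_{13}$ for non-CM $E/\Q$. Proposition~1.13 of \cite{PossibleImages} only applies for $\ell\ge 17$; for $\ell=13$ the paper has to do something different. Specifically, the paper invokes \cite{MR3961086} to rule out the Cartan-normaliser case entirely at $\ell=13$, leaving \emph{only} the exceptional case $G_{13}\subseteq G_{S_4}$, which is then handled by a direct computation (listing the subgroups of $G_{S_4}$ with surjective determinant, a complex-conjugation element, irreducible action, and projective exponent $\ge 3$, and checking that each contains both $\matr0110$ and $\matr0{-1}10$ so that Proposition \ref{proposition:containsAllScalars}(1) applies). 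Similarly, at $\ell=7$ one of the possible images listed in \cite[Theorem~1.5]{PossibleImages} is not a Cartan normaliser, and the paper verifies the hypotheses of Proposition \ref{proposition:containsAllScalars}(1b) for it by hand.

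Your final paragraph correctly identifies the containment-direction gap (Dickson gives $G_\ell\subseteq N_\ell$, Corollary \ref{cor:AllScalars} needs $G_\ell\supseteq N_\ell$), but waving at ``finer structural information \ldots\ used prime-by-prime'' is not a proof. The actual work for $\ell\in\{5,7,11,13\}$ is a case-by-case enumeration of the possible $G_\ell$ from \cite{PossibleImages}, followed by checking in each case that either Corollary \ref{cor:AllScalars} or Proposition \ref{proposition:containsAllScalars} applies. Without that enumeration---and in particular without the separate $\ell=13$ argument---your proposal is incomplete.
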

\begin{proof}
For $\ell=2$ and $\ell=3$ the theorem follows from the results of \cite{MR3500996} and Corollary \ref{cor:3adic} respectively. We may therefore assume $\ell \geq 5$.
We distinguish several cases:
\begin{enumerate}
\item the $G_\ell$-module $E[\ell]$ is reducible. The claim follows from Corollary \ref{cor:ScalarsReducibleCase}.

\item the $G_\ell$-module $E[\ell]$ is irreducible and $\ell \mid \#G_\ell$. By Lemma \ref{lemma:SuperLift} we obtain $G_{\ell^\infty} = \GL_2(\Z_\ell)$, and the claim follows.

\item the $G_\ell$-module $E[\ell]$ is irreducible and $\ell \nmid \#G_\ell$. Suppose first that $\ell \geq 17$: then the claim follows from \cite[Proposition 1.13]{PossibleImages} (the exceptional $j$-invariants correspond to elliptic curves for which $G_\ell$ does not act irreducibly on $E[\ell]$, see \cite[Theorem 1.10]{PossibleImages}). For $\ell=5, 7, 11$, Theorems 1.4, 1.5 and 1.6 in \cite{PossibleImages} completely describe the possible mod-$\ell$ images $G_{\ell}$. Since $G_\ell$ acts irreducibly on $E[\ell]$ by assumption, we need to consider the following cases:
\begin{enumerate}
\item for $\ell=5$, up to conjugacy the group $G_\ell$ contains either the index-3 subgroup of a non-split Cartan or the full normaliser of a split Cartan. In both cases we may apply Corollary \ref{cor:AllScalars}.
Similarly, for $\ell=11$, up to conjugacy the only possibility is that $G_\ell$ is the full normaliser of a non-split Cartan, and again we conclude by Corollary \ref{cor:AllScalars}.

\item for $\ell=7$, up to conjugacy we have that $G_\ell$ is the normaliser of a (split or non-split) Cartan subgroup, or that it contains $\langle \begin{pmatrix}
2 & 0 \\
0 & 4
\end{pmatrix}, \begin{pmatrix}
0 & 2 \\
1 & 0
\end{pmatrix} \rangle$. The first case is handled as above. In the other case, one checks that $G_\ell$ contains $\matr 0110$, and clearly it contains $\matr 2004$, so the hypothesis of Proposition \ref{proposition:containsAllScalars} (1b) is satisfied (see Remark \ref{rmk:GoodElements}) and the claim follows.
\end{enumerate}
This only leaves the prime $\ell=13$. By \cite[§1.6]{PossibleImages}, the maximal proper subgroups of $\GL_2(\mathbb{F}_{13})$ not contained in a Borel are (up to conjugacy) the normalisers of (split and non-split) Cartan subgroups and the group 
\[
G_{S_4} = \langle \matr 2002, \matr 2003, \matr 0{-1}10, \matr 11{-1}1
 \rangle.
 \]
 The main result of \cite{MR3961086} (precisely, Theorem 1.1 and Corollary 1.3 in op.~cit.) shows that $G_{13}$ is not conjugate to a subgroup of a (split or non-split) Cartan. It remains to understand the case $G_{13} \subseteq G_{S_4}$. Consider the collection $\mathcal{C}$ of subgroups $H \subseteq G_{S_4}$ that satisfy all of the following conditions:
\begin{enumerate}
\item $\det H = \mathbb{F}_{13}^\times$;
\item $H$ contains an element $h$ with $h^2=\operatorname{Id}$ and $\tr(h)=0$;
\item the projective image $H / (H \cap \mathbb{F}_{13}^\times)$ has exponent at least 3;
\item $H$ acts irreducibly on $E[13]$.
\end{enumerate}
If $E$ is a non-CM elliptic curve over $\mathbb{Q}$ such that $G_{13}$ is contained (up to a choice of basis for $E[13]$) in $G_{S_4}$ and not contained in a Borel subgroup, then $G_{13}$ is a member of $\mathcal{C}$: (a) follows from the surjectivity of the mod-13 cyclotomic character over $\mathbb{Q}$, (b) holds because the image of complex conjugation has these properties, (c) holds by \cite[Lemma 2.4]{DAVID20112175}, and (d) is true by definition. One checks easily that all the groups $H$ in class $\mathcal{C}$ contain both $\matr 0110$ and $\matr 01{-1}0$, hence once again Proposition \ref{proposition:containsAllScalars} (1) applies to show that $1+13\mathbb{Z}_{13} \subseteq G_{13^\infty}$, as desired.
\end{enumerate}
\end{proof}

\begin{remark}
Theorem 1.1 in the very recent preprint \cite{balakrishnan2021quadratic}, combined with \cite{MR3263141}, gives the finite list of $j$-invariants of non-CM elliptic curves $E/\Q$ for which $G_{13}$ is contained (up to conjugation) in $G_{S_4}$.  For each of these elliptic curves, the image of $G_{13}$ in $\PGL_2(\F_{13})$ is isomorphic to $S_4$: while this is not necessary for our proof, it can be used to simplify the case $\ell=13$ of the previous argument.
\end{remark}

We also have a similar result in the CM case:
\begin{proposition}\label{prop:ScalarsInImageCM}
Let $E/\Q$ be an elliptic curve with CM and let $\ell$ be a prime number. Define
\[
n_\ell' =
\begin{cases}
3, \text{ if } \ell=2 \\
1, \text{ if } \ell=3, 7, 11, 19, 43, 67, 163 \\
0, \text{ if } \ell \neq 2, 3, 7, 11, 19, 43, 67, 163
\end{cases}
\]
The image $G_{\ell^\infty}$ of the $\ell$-adic Galois representation attached to $E/\mathbb{Q}$ contains all scalars congruent to $1$ modulo $\ell^{n_\ell'}$. Moreover, for $\ell \geq 5$ the image $G_{\ell^\infty}$ contains a scalar not congruent to $\pm 1 \pmod{\ell}$.
\end{proposition}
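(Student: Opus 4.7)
The plan is to use Frobenius elements at primes $p$ inert in the CM field $K := \operatorname{End}_{\qbar}(E) \otimes \Q$, an imaginary quadratic field with discriminant in $\{-3,-4,-7,-8,-11,-19,-43,-67,-163\}$. For a prime $p \geq 5$ of good reduction with $p \neq \ell$ and $p$ inert in $K$, the reduction $\tilde{E}_p$ is supersingular, so $a_p = 0$ and the characteristic polynomial of $\rho_{\ell^\infty}(\operatorname{Frob}_p)$ is $X^2 + p$; consequently $\rho_{\ell^\infty}(\operatorname{Frob}_p)^2 = -p \cdot \Id$ is a scalar element of $G_{\ell^\infty} \cap \Z_\ell^\times$. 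The whole proof then reduces to controlling which residues $-p \bmod \ell^n$ are achievable as $p$ varies over inert primes of good reduction.

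Next I would apply Chebotarev's density theorem in the compositum $K \cdot \Q(\zeta_{\ell^n})$, which governs simultaneously the inertness of $p$ in $K$ and its class modulo $\ell^n$. When $\ell \nmid \disc(K)$ --- equivalently $\ell \notin \{2,3,7,11,19,43,67,163\}$ --- the fields $K$ and $\Q(\zeta_{\ell^n})$ are linearly disjoint over $\Q$ (the unique quadratic subfield of $\Q(\zeta_{\ell^n})$ for $\ell$ odd is $\Q(\sqrt{\pm\ell})$, whose discriminant is supported at $\ell$), so every class $a \in (\Z/\ell^n)^\times$ is realised by an inert prime. Passing to the limit gives $\Z_\ell^\times \subseteq G_{\ell^\infty}$, as required. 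When $\ell$ is odd with $\ell \mid \disc(K)$, we are forced into $\ell \in \{3,7,11,19,43,67,163\}$, all $\equiv 3 \pmod 4$, with $K = \Q(\sqrt{-\ell}) \subset \Q(\zeta_\ell)$; inertness is then equivalent to $p$ being a non-residue modulo $\ell$, and the scalars $-p$ cover precisely the index-$2$ subgroup $\Z_\ell^{\times 2}$, because $-1$ is itself a non-residue when $\ell \equiv 3 \pmod 4$. Since $\ell$ is odd, the pro-$\ell$ part $1 + \ell \Z_\ell$ is contained in $\Z_\ell^{\times 2}$, giving the claimed inclusion $1 + \ell\Z_\ell \subseteq G_{\ell^\infty}$.

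For $\ell = 2$ the three possible behaviours of $2$ in $K$ must be treated separately. If $2 \nmid \disc(K)$, i.e.\ $K = \Q(\sqrt{-d})$ for $d \in \{3,7,11,19,43,67,163\}$, the disjointness argument still yields $\Z_2^\times \subseteq G_{2^\infty}$. For $K = \Q(i)$, inertness amounts to $p \equiv 3 \pmod 4$, so the scalars $-p$ cover $1 + 4\Z_2$; for $K = \Q(\sqrt{-2})$, inertness is $p \equiv 5,7 \pmod 8$, and the scalars $-p$ generate the index-$2$ subgroup $\{a\in\Z_2^\times : a \equiv 1,3 \pmod 8\}$. Both of the latter subgroups contain $1 + 8\Z_2$, matching $n_2' = 3$; it is $K = \Q(\sqrt{-2})$ that prevents any sharper bound. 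The second assertion follows at once: for $\ell \geq 5$, either $\Z_\ell^\times \subseteq G_{\ell^\infty}$ (in which case any non-$\pm 1$ residue works), or $\ell \in \{7,11,19,43,67,163\}$ and $\Z_\ell^{\times 2} \subseteq G_{\ell^\infty}$; in the latter case there are $(\ell-1)/2 \geq 3$ squares modulo $\ell$, and since $-1$ is a non-square for $\ell \equiv 3 \pmod 4$, any non-trivial square supplies a scalar $\not\equiv \pm 1 \pmod \ell$. The main technical subtlety lies in the $\ell = 2$ analysis, where $\Z_2^\times \cong \{\pm 1\} \times (1+4\Z_2)$ is not topologically cyclic and the two ramified CM fields must be handled by direct computation; beyond this, the proof is essentially bookkeeping around the single identity $\operatorname{Frob}_p^2 = -p \cdot \Id$ combined with Chebotarev.
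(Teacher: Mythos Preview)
Your argument is correct and takes a genuinely different route from the paper. The paper proceeds by quoting the explicit determination of $\ell$-adic images for CM elliptic curves over $\Q$ carried out in \cite{2018arXiv180902584L}: it lists the thirteen possible CM orders, and for each prime $\ell$ reads off from the cited theorems whether $G_{\ell^\infty}$ contains $\Z_\ell^\times$, $\Z_\ell^{\times 2}$, or $1+8\Z_2$, according to whether $\ell \nmid 2f\Delta_K$, $\ell \mid f\Delta_K$ with $\ell$ odd, or $\ell=2$. Your proof instead produces the scalars directly via the single relation $\rho_{\ell^\infty}(\operatorname{Frob}_p)^2 = -p\cdot\Id$ at supersingular primes, and then uses Chebotarev in $K\cdot\Q(\zeta_{\ell^n})$ to realise the required residue classes.

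What your approach buys is self-containment: it needs only Deuring's reduction criterion, the Hasse bound, and Dirichlet, rather than the full classification of CM Galois images. It also makes transparent \emph{why} the exceptional primes are exactly those dividing some $\disc(K)$, namely because linear disjointness of $K$ and $\Q(\zeta_{\ell^n})$ fails precisely then. The paper's approach, by contrast, is shorter once the references are in hand and yields sharper information as a by-product (for instance the exact index $[\Z_\ell^\times : \Z_\ell^\times \cap G_{\ell^\infty}]$ rather than just an upper bound). One small remark: your argument silently uses that the conductors $f$ of the thirteen CM orders are at most $3$, so that the restriction $p \geq 5$ automatically ensures $p \nmid f$ and Deuring's criterion applies cleanly; this is worth making explicit.
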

\begin{proof}
Let $K$ be the imaginary quadratic field of complex multiplication of $E$, let $\Delta_K$ be its discriminant, and let $\mathcal{O}_{K,f}$ be the endomorphism ring of $E_{\overline{\Q}}$, seen as a subring of $\mathcal{O}_K$ (here $f$ denotes the conductor of the order $\mathcal{O}_{K,f}$ in $\mathcal{O}_K$). It is well-known that there are 13 possible pairs $(K,f)$, given by $K=\mathbb{Q}(i)$ and $f=1,2$, $K=\mathbb{Q}(\zeta_3)$ and $f=1,2,3$, $K=\mathbb{Q}(\sqrt{-7})$ and $f=1,2$, and $K=\mathbb{Q}(\sqrt{-d})$ for $d=2, 11, 19, 43, 67, 163$ with $f=1$ (see for example \cite[Appendix A, §3]{MR1312368}).
If $\ell \nmid 2f \Delta_K$, then by \cite[Theorem 1.2 (4) and Theorem 1.4]{2018arXiv180902584L} the $\ell$-adic image $G_{\ell^\infty}$ contains all scalars. If $\ell \mid f\Delta_K$ and $\ell > 2$, then $G_{\ell^\infty}$ contains $\Z_\ell^{\times 2}$ by \cite[Theorem 1.5]{2018arXiv180902584L}: notice that by the above this is only possible for $\ell=3, 7, 11, 19, 43, 67, 163$, and that for $\ell \geq 7$ the group $\Z_\ell^{\times 2}$ contains scalars not congruent to $\pm 1 \pmod \ell$. Finally, for $\ell=2$ we have by \cite[Theorems 1.6, 1.7, 1.8]{2018arXiv180902584L} that $G_{2^\infty}$ contains all scalars congruent to $1$ modulo $8$.
\end{proof}

\begin{remark}
A slightly worse result can be obtained more easily (without the need to distinguish cases) by applying \cite[Theorem 1.5]{MR3766118}. 
\end{remark}

\subsection{Complements to Theorem \ref{thm:ScalarsImageGalois}}

For future use, we record here the following modest strengthening of Theorem \ref{thm:ScalarsImageGalois}:
\begin{proposition}\label{prop:SmallImprovement}
Let $E/\Q$ be a non-CM elliptic curve. 
Let $\ell \in \{13,17,37\}$. The image of the $\ell$-adic Galois representation attached to $E/\Q$ contains a scalar $\lambda$ with $v_\ell(\lambda^2-1)=0$.
\end{proposition}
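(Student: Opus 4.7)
The plan is to reduce the statement to the existence of a scalar $\bar\lambda\Id \in G_\ell$ with $\bar\lambda \in \F_\ell^\times \setminus \{\pm 1\}$: by Corollary \ref{cor:LiftingScalars} the Teichm\"uller lift $\tilde\lambda\Id$ then belongs to $G_{\ell^\infty}$, and the congruence $\tilde\lambda \equiv \bar\lambda \pmod \ell$ automatically forces $v_\ell(\tilde\lambda^2-1) = 0$. Thus it suffices to exhibit a scalar of order strictly greater than $2$ inside $G_\ell$.

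I would then follow the case split used in the proof of Theorem \ref{thm:ScalarsImageGalois}. If $E[\ell]$ is irreducible and $\ell \mid \#G_\ell$, then Lemma \ref{lemma:SuperLift} gives $G_{\ell^\infty} = \GL_2(\Z_\ell)$, and the statement is trivial. If $E[\ell]$ is irreducible and $\ell \nmid \#G_\ell$, the classification \cite[Proposition 1.13]{PossibleImages} forces $G_\ell$ to contain (a conjugate of) the full normaliser of a split or non-split Cartan for $\ell \in \{17,37\}$, and the same holds for $\ell = 13$ with the sole exception of the subcase $G_{13} \subseteq G_{S_4}$; since a Cartan contains $\F_\ell^\times \cdot \Id$, scalars of every order in $\F_\ell^\times$ are available. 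The subcase $G_{13} \subseteq G_{S_4}$ can be settled by a finite group-theoretic check: condition (c) defining the class $\mathcal{C}$ in the proof of Theorem \ref{thm:ScalarsImageGalois} forces each admissible $H$ to contain an element whose projective image in $G_{S_4}/\F_{13}^\times \cong S_4$ has order at least $3$, and inspecting the possible lifts of such elements inside $G_{S_4}$ shows that a suitable power yields a non-trivial scalar (as witnessed, for instance, by the order-$12$ element $\matr{1}{1}{-1}{1}$, whose fourth power equals $-4\Id \equiv 9\Id$, a scalar of order $3$ in $\F_{13}^\times$).

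The main obstacle will be the reducible case, in which $E$ admits a rational $\ell$-isogeny. For $\ell \in \{17,37\}$ only finitely many $\Q$-isomorphism classes of non-CM elliptic curves over $\Q$ have this property (the non-cuspidal non-CM rational points of $X_0(17)$ and $X_0(37)$ being explicitly known); since $G_{\ell^\infty} \cap \Z_\ell^\times$ is an isogeny invariant (as noted in the proof of Corollary \ref{cor:ScalarsReducibleCase}), I would simply check one representative per $\Q$-isogeny class by direct computation. For $\ell = 13$ the family of isogeny classes is infinite, parameterised by the non-cuspidal rational points of $X_0(13) \cong \mathbb{P}^1$, and a uniform argument is required. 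The plan is to study the two characters $\chi_1, \chi_2 \colon \Gal(\qbar\mid\Q) \to \F_{13}^\times$ appearing on the diagonal of the Borel subgroup containing $G_{13}$ (with $\chi_1\chi_2 = \chi_{13}$) and to show that the image $T_0 := (\chi_1,\chi_2)(\Gal(\qbar\mid\Q)) \subseteq (\F_{13}^\times)^2$ intersects the diagonal $\Delta = \{(a,a) : a \in \F_{13}^\times\}$ in a subgroup of order at least $3$; such an element produces the desired scalar in $G_{13}$.

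The hardest step will be this last lower bound on $|T_0 \cap \Delta|$ for $\ell = 13$. Setting $\psi := \chi_1^2 \chi_{13}^{-1}$, one has $|T_0 \cap \Delta| = |T_0|/|\im \psi|$, so the task reduces to showing $|\im \psi| \leq |T_0|/3$. A careful analysis of the shape of the isogeny character---using the refined information provided in \cite{MR3961086} on the non-CM rational points of $X_0(13)$, together with the known ramification behaviour of $\chi_1$ at $13$ and at the primes of bad reduction of $E$---rules out the remaining bad configurations (essentially: $\chi_1$ being a power $\chi_{13}^a$ with $a \not\equiv 2 \pmod 3$, or a quadratic twist thereof) and completes the proof.
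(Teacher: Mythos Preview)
Your reduction to finding a scalar $\bar\lambda \neq \pm 1$ in $G_\ell$ via Corollary~\ref{cor:LiftingScalars} is exactly right, and your treatment of the irreducible cases is essentially the paper's.

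The divergence is in the reducible case for $\ell=13$, and here you are working much harder than necessary. The paper simply cites \cite[Theorem 1.8]{PossibleImages}, which gives the complete (short) list of subgroups of a Borel of $\GL_2(\F_{13})$ that occur as $G_{13}$ for some $E/\Q$; one then checks by inspection that each contains a scalar different from $\pm\Id$. Your proposed character-theoretic route---bounding $|T_0 \cap \Delta|$ via $|\operatorname{im}\psi|$---could in principle be pushed through, but as written it has two problems. First, the reference is wrong: \cite{MR3961086} concerns the Cartan modular curves $X_s^+(13)$ and $X_{ns}^+(13)$, not $X_0(13)$, and says nothing about the isogeny character. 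Second, the final step (``a careful analysis \ldots rules out the remaining bad configurations'') is not an argument but a hope; ruling out $\chi_1 = \chi_{13}^a$ with $a \not\equiv 2 \pmod 3$ up to quadratic twist is precisely the content one would need to supply, and it is not obvious without invoking the very classification you are trying to avoid. A minor further point: an element of $T_0 \cap \Delta$ gives only $\rho_{13}(\sigma) = \begin{pmatrix} \lambda & * \\ 0 & \lambda \end{pmatrix}$, not a scalar; you need an extra step (e.g.\ raising to the $13$th power) to land in $\F_{13}^\times \cdot \Id$.

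For the $G_{S_4}$ subcase your argument is also more delicate than needed. The paper's Lemma~\ref{lemma:Scalars13} is a two-line counting argument: since $\det$ is surjective onto the cyclic group $\F_{13}^\times$ of order $12$, the group $G_{13}$ contains an element of order $12$; but if the scalars in $G_{13}$ were contained in $\{\pm\Id\}$, the projection to $\PGL_2(\F_{13})$ would have kernel of order at most $2$, bounding all element orders by $2 \cdot 5 = 10$. This avoids any inspection of lifts or explicit matrices.
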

\begin{proof}
By Corollary \ref{cor:LiftingScalars} it suffices to show that $G_{\ell}$ contains a scalar different from $\pm 1$.
For $\ell= 17, 37$, this follows directly from the results of \cite{PossibleImages} (specifically, Theorem 1.10 and Proposition 1.13). For $\ell=13$, by Theorem \ref{thm:Dickson} and the fact that $G_{13}$ has surjective determinant we know that $G_{13}$ satisfies one of the following:
\begin{enumerate}
\item $G_{13}=\GL_2(\F_{13})$: in this case the conclusion is obvious.
\item $G_{13}$ is contained up to conjugacy in a Borel subgroup: by \cite[Theorem 1.8]{PossibleImages}, the possible groups that arise in this way all contain a scalar different from $\pm 1$.
\item $G_{13}$ is contained up to conjugacy in the normaliser of a (split or nonsplit) Cartan subgroup: this is impossible by the main result of \cite{MR3961086}.
\item the projective image of $G_{13}$ is isomorphic to a subgroup of $S_4$ or $A_5$: the claim follows from Lemma \ref{lemma:Scalars13} below.
\end{enumerate}

\end{proof}

\begin{lemma}\label{lemma:Scalars13}
Let $G$ be a subgroup of $\GL_2(\F_{13})$ having projective image isomorphic to a subgroup of $S_4$ or $A_5$. Suppose that $\det(G)=\mathbb{F}_{13}^\times$: then $G$ contains a scalar different from $\pm \Id$.
\end{lemma}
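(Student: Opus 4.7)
The plan is to argue by contradiction: suppose that $Z := G \cap (\F_{13}^\times \cdot \Id)$ is contained in $\{\pm \Id\}$, so that $|Z| \leq 2$, and derive a contradiction from the hypothesis $\det G = \F_{13}^\times$.

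First, I would introduce $H := G \cap \SL_2(\F_{13})$, the kernel of the surjection $\det\colon G \to \F_{13}^\times$, so that $G/H \cong \F_{13}^\times$ is cyclic of order $12$. Since $\det(\pm\Id) = 1$, the assumption $Z \subseteq \{\pm\Id\}$ gives $Z \subseteq H$. Consequently $H/Z$ is a normal subgroup of $\overline{G} := G/Z$ with quotient $\overline{G}/(H/Z) \cong G/H \cong \F_{13}^\times$. In particular $\overline{G}$ admits a surjection onto a cyclic (hence abelian) group of order $12$, which forces $|\overline{G}^{\mathrm{ab}}| \geq 12$ and a fortiori $|\overline{G}| \geq 12$.

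Next, I would use the hypothesis that $\overline{G}$ is isomorphic to a subgroup of $S_4$ or $A_5$. Inspecting the subgroup lattices of these two groups, the subgroups of order at least $12$ are exactly $A_4$, $S_4$, and $A_5$, whose abelianizations have orders $3$, $2$, and $1$ respectively. In every case $|\overline{G}^{\mathrm{ab}}|$ is strictly smaller than $12$, which contradicts the conclusion of the previous paragraph. Therefore $Z$ must contain an element outside $\{\pm\Id\}$, as required.

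The argument is essentially elementary and I do not expect any real obstacle: the only mildly subtle step is observing that because $\det$ vanishes on $\{\pm\Id\}$, the determinant descends to the projective image $\overline{G}$ and forces a large abelian quotient there — a condition easily ruled out by examining the few possible shapes of $\overline{G}$.
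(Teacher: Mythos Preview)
Your proof is correct. Both your argument and the paper's exploit the surjection $\det\colon G \to \F_{13}^\times$ together with the assumption that the projective image has small kernel, but they extract different contradictions. The paper observes that $G$ must contain an element of order $12$, then notes that if $|Z|\le 2$ every element of $G$ has order at most $2\cdot\max\{4,5\}=10$, since the maximal element orders in $S_4$ and $A_5$ are $4$ and $5$ respectively. You instead push the surjection $G\to\F_{13}^\times$ down to $\overline{G}=G/Z$ and argue that $\overline{G}$ would need an abelian quotient of order $12$, which you rule out by listing the subgroups of $S_4$ and $A_5$ of order at least $12$ and checking their abelianisations. The paper's route is marginally more economical (no subgroup enumeration needed), while yours makes explicit the structural obstruction that the determinant cannot descend to the projective image; both are short and entirely elementary.
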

\begin{proof}
The hypothesis implies that the cyclic group $\mathbb{F}_{13}^\times$ is a quotient of $G$, so $G$ contains an element of order $12$.
If the claim were false, the projection map $G \to \PGL_2(\mathbb{F}_{13})$ would have kernel of order at most 2. The maximal order of an element in $S_4$ is 4, and in $A_5$ is 5. It would follow that the maximal order of an element in $G$ is at most 10, contradiction.
\end{proof}

\section{Galois cohomology of torsion points}\label{sect:CohomologyBound}

In this section we show that there exists a universal constant $e > 0$ such that, for all elliptic curves $E/\Q$ and all positive integers $M, N$ with $N \mid M$, the cohomology group $H^1( \Gal(\Q_M \mid \Q), E[N])$ is killed by multiplication by $e$ (which we denote by $[e]$). We also provide an explicit admissible value for $e$.

We begin by showing that it suffices to consider the cohomology groups $H^1( G_\infty, E[N])$.
\begin{lemma}\label{lemma:CohomologyBoundReductionToGInfty}
Let $E/K$ be an elliptic curve over a number field $K$ and let $M,N$ be positive integers with $N \mid M$. Suppose that $H^1( G_\infty, E[N])$ is killed by $[e]$: then $H^1( \Gal(K_M \mid K), E[N])$ is also killed by $[e]$.
\end{lemma}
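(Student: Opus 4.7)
The plan is to apply the inflation-restriction exact sequence to the tower of fields $K \subseteq K_M \subseteq K_\infty$, exploiting the fact that $\Gal(K_\infty \mid K_M)$ acts trivially on $E[N]$.

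First, I would set up the short exact sequence of groups
\[
1 \to \Gal(K_\infty \mid K_M) \to \Gal(K_\infty \mid K) \to \Gal(K_M \mid K) \to 1,
\]
noting that $\Gal(K_\infty \mid K) = G_\infty$ by the definition recalled in Section \ref{subsect:NotationTorsion}. Because $N \mid M$, every $N$-torsion point of $E$ has its coordinates in $K_M$, so $E[N] \subseteq E(K_M)$ and therefore $\Gal(K_\infty \mid K_M)$ acts trivially on $E[N]$; in particular $E[N]^{\Gal(K_\infty \mid K_M)} = E[N]$.

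Next, I would invoke the five-term inflation-restriction exact sequence in low degree, which yields
\[
0 \to H^1\bigl(\Gal(K_M \mid K),\, E[N]\bigr) \xrightarrow{\mathrm{inf}} H^1(G_\infty, E[N]) \xrightarrow{\mathrm{res}} H^1\bigl(\Gal(K_\infty \mid K_M),\, E[N]\bigr)^{\Gal(K_M \mid K)}.
\]
The key observation is simply that the inflation map is injective, so $H^1(\Gal(K_M \mid K), E[N])$ embeds into $H^1(G_\infty, E[N])$.

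Finally, since $[e]$ annihilates the larger group $H^1(G_\infty, E[N])$ by assumption, it annihilates every subgroup, and in particular the image of inflation, which is isomorphic to $H^1(\Gal(K_M \mid K), E[N])$. There is no real obstacle here; the only thing to check carefully is the triviality of the action of $\Gal(K_\infty \mid K_M)$ on $E[N]$, which is immediate from $N \mid M$.
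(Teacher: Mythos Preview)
Your proof is correct and follows exactly the same approach as the paper: set $H = \Gal(K_\infty \mid K_M)$, observe that $H$ acts trivially on $E[N]$ because $N \mid M$, and use the injectivity of inflation in the inflation-restriction sequence to conclude.
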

\begin{proof}
Denote by $H$ the kernel of the natural map $G_\infty \to \Gal(K_M \mid K)$. As $H$ acts trivially on $E[N]$ by the assumption $N \mid M$, the inflation-restriction exact sequence gives an injection of $H^1( G_\infty /H , E[N])=H^1( \Gal(K_M \mid K) , E[N])$ into $H^1( G_\infty, E[N])$, and the claim follows.
\end{proof}

On the other hand, if $H^1( \Gal(K_M \mid K), E[N])$ is killed by $[e]$ for all $M$ divisible by $N$, passing to the limit in $M$ we also obtain that $[e]$ kills $H^1( G_\infty, E[N])$.
The statement we aim for is thus equivalent to saying that, for every $E/\Q$ and positive integer $N$, the group $H^1( G_\infty, E[N])$ has finite exponent dividing $e$.
Our main tool for bounding the exponent of cohomology groups is the
following lemma (see for example \cite[Lemma A.2]{MR2018998} for a proof).
\begin{lemma}[Sah's Lemma]
  \label{lemma:Sah}
  Let $G$ be a profinite group, let $M$ be a continuous $G$-module and let $g$
  be in the centre of $G$. Then the endomorphism $x\mapsto gx-x$ of $M$ induces
  the zero map on $H^1(G,M)$. In particular, if $x\mapsto gx-x$ is an
  isomorphism, then $H^1(G,M)=0$.
\end{lemma}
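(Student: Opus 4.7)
The plan is to verify the statement directly at the level of continuous $1$-cocycles, exploiting the cocycle identity twice via the centrality of $g$. Fix a continuous $1$-cocycle $f : G \to M$, i.e.\ a continuous map satisfying $f(\sigma\tau) = f(\sigma) + \sigma f(\tau)$. I aim to show that the new cocycle $\sigma \mapsto (g-1)f(\sigma)$ is a coboundary, so that multiplication by $g-1$ is zero on $H^1(G, M)$.

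The key computation is to expand $f(g\sigma)$ and $f(\sigma g)$ using the cocycle relation, and to equate the two expressions thanks to $g\sigma = \sigma g$. This yields
\[
f(g) + g f(\sigma) = f(\sigma) + \sigma f(g),
\]
which rearranges to
\[
(g-1)\, f(\sigma) = (\sigma - 1)\, f(g).
\]
The right-hand side is precisely the value at $\sigma$ of the principal coboundary attached to the element $f(g) \in M$, hence $(g-1) f$ represents $0$ in $H^1(G, M)$. This proves the first assertion.

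For the \emph{in particular} clause, suppose that multiplication by $g-1$ is an automorphism of $M$. By functoriality of cohomology, its induced action on $H^1(G, M)$ is then an automorphism as well; combined with the first part, which says this induced action is the zero map, this forces $H^1(G, M) = 0$.

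I do not expect any serious obstacle: the only point requiring minor care is continuity of the coboundary produced, but this is automatic since $f$ is continuous and $M$ carries a continuous $G$-action. Everything else is a two-line manipulation of the cocycle identity, applied to the pair $(\sigma, g)$ in both orders.
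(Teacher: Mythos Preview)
Your proof is correct and is the standard one. The paper does not actually prove the lemma (it only cites \cite[Lemma A.2]{MR2018998}), but the cocycle identity you derive, $(g-1)f(\sigma)=(\sigma-1)f(g)$, is precisely the computation the paper later invokes under the phrase ``reasoning as in the proof of Sah's lemma'' in the proof of Theorem~\ref{theorem:ExpCohomologySmallPrimes}; so your approach coincides with what the authors have in mind.
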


\begin{remark}\label{rmk:Sah}
In our applications of Lemma \ref{lemma:Sah} we will have $G \subseteq \GL_2(R)$ for a certain ring $R$ -- either $\Z_\ell$ for some prime $\ell$ or $\hat{\Z}$ -- and $M$ will be a submodule of $\left( \Q_\ell / \Z_\ell\right)^2$ or $\left( \Q/\Z \right)^2$. Notice that these objects carry a natural action of $\GL_2(\Z_\ell)$ and $\GL_2(\hat{\Z})$ respectively.
We will take $g$ to be a scalar multiple of the identity, that is, $g = \lambda \operatorname{Id}$ for some $\lambda \in R^\times$. The conclusion is then that the $R$-module 
$H^1(G,M)$ is killed by $\lambda-1$; when $R=\mathbb{Z}_\ell$, this is equivalent to saying that $H^1(G,M)$ is killed by $\ell^{v_\ell(\lambda-1)}$.
\end{remark}

Generalising the results of \cite{LawsonWutrich} we now give a uniform result on the cohomology of torsion points of elliptic curves over $\Q$ for all powers of primes.
\begin{theorem}
\label{theorem:ExpCohomologySmallPrimes}
Let $\ell$ be a prime number and let $E/\Q$ be a non-CM elliptic curve. For every $m \geq 1$, the exponent of $H^1(G_{\ell^\infty}, E[\ell^m])$ divides $\ell^{n_\ell}$, where
\begin{equation}\label{eq:n_ell}
n_\ell:=
    \begin{cases}
      3 & \text{for }\ell=2, 3,\\
      1 & \text{for }\ell=5, 7, 11,\\
      0 & \text{for }\ell\geq 13\,.
    \end{cases}
\end{equation}
\end{theorem}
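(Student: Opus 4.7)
The strategy is to apply Sah's Lemma (Lemma \ref{lemma:Sah}) to a carefully chosen central element of $G_{\ell^\infty}$. Concretely, any scalar $\lambda \operatorname{Id} \in G_{\ell^\infty}$ with $\lambda \in \Z_\ell^\times$ lies in the centre and acts on $E[\ell^m]$ as multiplication by $\lambda \bmod \ell^m$; Sah's Lemma then says that $\lambda - 1$ annihilates $H^1(G_{\ell^\infty}, E[\ell^m])$. Since the cohomology is a $\Z/\ell^m\Z$-module and $\lambda - 1 = \ell^{v_\ell(\lambda-1)} \cdot u$ for some $u \in \Z_\ell^\times$, it is in fact killed by $\ell^{v_\ell(\lambda-1)}$, as recorded in Remark \ref{rmk:Sah}. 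The problem therefore reduces to exhibiting, for each prime $\ell$, a scalar $\lambda \in G_{\ell^\infty}$ with $v_\ell(\lambda - 1) \leq n_\ell$.

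For $\ell \geq 19$ with $\ell \neq 37$, Theorem \ref{thm:ScalarsImageGalois} gives $\Z_\ell^\times \subseteq G_{\ell^\infty}$, so I would take $\lambda$ to be the Teichmüller lift of any non-identity element of $\F_\ell^\times$, obtaining $v_\ell(\lambda - 1) = 0 = n_\ell$. For $\ell \in \{13, 17, 37\}$, Proposition \ref{prop:SmallImprovement} provides a scalar $\lambda \in G_{\ell^\infty}$ with $v_\ell(\lambda^2 - 1) = 0$, so in particular $v_\ell(\lambda - 1) = 0 = n_\ell$. For $\ell \in \{5, 7, 11\}$, Theorem \ref{thm:ScalarsImageGalois} gives $1 + \ell \Z_\ell \subseteq G_{\ell^\infty}$; taking $\lambda = 1 + \ell$ yields $v_\ell(\lambda - 1) = 1 = n_\ell$. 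For $\ell = 3$, Corollary \ref{cor:3adic} gives $1 + 27\Z_3 \subseteq G_{3^\infty}$, so $\lambda = 1 + 27$ gives $v_3(\lambda - 1) = 3 = n_3$.

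The main obstacle is the case $\ell = 2$: Theorem \ref{thm:ScalarsImageGalois} only guarantees $1 + 16 \Z_2 \subseteq G_{2^\infty}$, which by the preceding recipe alone would yield $n_2 = 4$ rather than the sharper $n_2 = 3$. To bridge this gap, I would appeal to the more detailed description of $2$-adic images available in \cite{MR3500996}, with the goal of showing that $G_{2^\infty}$ always contains a scalar $\lambda \equiv 9 \pmod{16}$ (equivalently, $v_2(\lambda - 1) = 3$). Alternatively, since the theorem is explicitly advertised as a generalisation of \cite{LawsonWutrich}, the case $\ell = 2$ can be imported directly from their work, where a sharp analysis along these lines has already been carried out.

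In summary, once the scalar inputs of Section \ref{section:scalarsInImage} are in place, the proof is essentially formal: Sah's Lemma does all the work, with the auxiliary role of Proposition \ref{prop:SmallImprovement} to squeeze out the extra factor of $\ell$ for $\ell \in \{13,17,37\}$, and only the case $\ell = 2$ requires genuine additional input beyond Theorem \ref{thm:ScalarsImageGalois}.
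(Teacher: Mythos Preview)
Your argument for $\ell>2$ is exactly the paper's: Sah's Lemma applied to a scalar of the right valuation, sourced from Theorem~\ref{thm:ScalarsImageGalois} and Proposition~\ref{prop:SmallImprovement}.

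For $\ell=2$, however, both of your proposed patches fail. The hope that $G_{2^\infty}$ always contains a scalar $\lambda$ with $v_2(\lambda-1)\leq 3$ is false: the Rouse--Zureick-Brown classification \cite{MR3500996} produces eight $2$-adic images (labelled X238a--d and X239a--d) for which the scalar subgroup is exactly $1+16\Z_2$, so Sah alone yields only exponent dividing $16$. And \cite{LawsonWutrich} does not in fact contain the bound you want to import; as the introduction to the present paper notes, that reference gives vanishing criteria rather than uniform exponent bounds in the non-vanishing cases.

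The paper's actual treatment of these eight exceptional groups is a genuine refinement of the Sah argument. Writing $\lambda=17\Id\in G_{2^\infty}$, the identity $(\lambda-1)\xi(g)=g\cdot\xi(\lambda)-\xi(\lambda)$ already gives that $16\xi$ is a coboundary; the extra step is to check (by inspecting generators of each of the eight groups) that the right-hand side being divisible by $16$ forces $\xi(\lambda)$ itself to be divisible by $8$. Writing $\xi(\lambda)=8a$, one deduces that $2\xi$ is cohomologous to a cocycle with values in $E[8]$, and the proof is completed by a finite MAGMA computation showing that $H^1(G_{2^n},E[8])$ has exponent dividing $4$ for these groups. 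This last step is not formal and cannot be replaced by a pure Sah argument.
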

\begin{proof}
For $\ell>2$ we apply Lemma \ref{lemma:Sah} (in the form of Remark \ref{rmk:Sah}) with $g=\lambda \operatorname{Id}$, where $\lambda \in \Z_\ell^\times \cap G_{\ell^\infty}$ is such that $v_\ell(\lambda-1)=n_\ell$. Note that such a $\lambda$ exists by Theorem \ref{thm:ScalarsImageGalois} and Proposition \ref{prop:SmallImprovement}.

For $\ell=2$ the proof is based on the classification of all possible 2-adic images provided by \cite{MR3500996}, and is in part computational. As $G_{2^\infty}$ is the inverse limit of the groups $G_{2^n}$, it suffices to show that for all integers $n \geq m \geq 1$ the exponent of $H^1(G_{2^n}, E[2^m])$ divides 8.
 If $G_{2^\infty}$ contains a scalar $\lambda$ with $v_2(\lambda-1) \leq 3$ the result follows immediately from Lemma \ref{lemma:Sah} as above, so let us assume that this is not the case. This leaves us with only 8 groups left, namely those with Rouse--Zureick-Brown labels X238a,
X238b,
X238c,
X238d,
X239a,
X239b,
X239c,
X239d.
All of these groups are the inverse images of their reduction modulo $2^5$ and contain $17\Id$. Let now $\xi : G_{2^n} \to E[2^m]$ be a 1-cocycle and let $\lambda \in G_{2^n}$ be the scalar $17\Id$. Notice that there is nothing to prove if $m \leq 3$, so we may assume $n \geq m \geq 4$. Reasoning as in the proof of Sah's lemma, 
we observe that
\[
\xi(\lambda g) = \xi(g \lambda) \Rightarrow (\lambda-1)\xi(g) = g \cdot \xi(\lambda) - \xi(\lambda).
\]
This formula shows both that $16\xi$ is a coboundary, and that $\xi(\lambda)$ is such that $g \cdot \xi(\lambda) - \xi(\lambda)$ is divisible by $16$ in $E[2^m]$. Imposing this condition for $g$ varying in a set of generators of $G_{2^\infty}$ (recall that we only have finitely many groups to test) we obtain that $\xi(\lambda)$ is divisible by 8. Let us write $\xi(\lambda)=8a$ for some (non-unique) $a \in E[2^m]$.
As a consequence, we have that for every $g \in G_{2^n}$
\[
8 \cdot 2 \xi(g) = g \cdot \xi(\lambda) - \xi(\lambda) = 8(g \cdot a - a).
\]
Letting $\psi$ be the coboundary $g \mapsto g\cdot a-a$ we then obtain that $2\xi$ is cohomologous to the cocycle $2\xi-\psi$, which by the above takes values in $E[8]$. A direct verification, for which we give details below, shows that $H^1(G_{2^n}, E[8])$ has exponent dividing $4$ for all $n \geq 3$. This implies in particular that $4 \cdot (2\xi) : G_{2^n} \to E[8]$ is a coboundary, hence a fortiori $8\xi : G_{2^n} \to E[2^m]$ is also a coboundary, and therefore $[8]$ kills $H^1(G_{2^n}, E[2^m])$ as desired.

To check that $H^1(G_{2^n}, E[8])$ has exponent dividing 4 we proceed as follows. Notice first that by Lemma \ref{lemma:CohomologyBoundReductionToGInfty} it suffices to show that $[4]$ is zero on $H^1(G_{2^\infty}, E[8])$. On the other hand, consider an element $g \in G_{2^\infty}$ that is the $8$-th power of an element $h$ congruent to the identity modulo $8$, and let $\xi : G_{2^\infty} \to E[8]$ be any cocycle. As $h$ acts trivially on $E[8]$, the restriction of $\xi$ to the subgroup generated by $h$ is a homomorphism, hence $\xi(g)=\xi(h^8)=8\xi(h)=0$. This proves that $\xi$ factors via the finite quotient
\[
G_{2^\infty} / \langle g^8 : g \equiv \Id \pmod{8} \rangle.
\]
For all the cases of interest we know from \cite{MR3500996} that $G_{2^\infty}$ contains all matrices congruent to 1 modulo $2^5$, hence $\langle g^8 : g \equiv \Id \pmod{8} \rangle$ contains all matrices congruent to $\Id$ modulo $2^8$. We are thus reduced to considering the group $Q := G_{2^8} / \langle g^8 : g \equiv \Id \pmod 8 \rangle$ and checking that the exponent of $H^1(Q, E[8])$ divides 4, which we do by explicit computations in MAGMA.
\end{proof}

In order to bound the exponent of $H^1(G_\infty,E[N])$  we will apply the following technical result,
which is worth stating in a general form.

\begin{proposition}
  \label{proposition:cohomologyBoundGeneric}
  Let $G_\infty$ be a subgroup of $\GL_2(\hat \Z)$ and for every
  prime $\ell$ denote by $G_{\ell^\infty}$ the projection of $G_\infty$ in
  $\GL_2(\Z_\ell)$. Let $J_\ell$ be the kernel of the projection
  $G_\infty\to \GL_2(\Z_\ell)$ and $\overline J_\ell$ be the image of $J_\ell$
  in $\prod_{p \text{ prime}}\GL_2(\F_p)$.
  Finally let $T$ be any $G_\infty$-submodule of $(\Q/\Z)^2$.
  Assume that for every prime $\ell$ there are a positive integer $a_\ell$ and
  non-negative integers $n_\ell,m_\ell$ such that the following hold:
  \begin{enumerate}
    \item For all but finitely many primes $\ell$ we have
          $v_\ell(a_\ell)=n_\ell=m_\ell=0$.
\item For every prime $\ell$ the exponent of $H^1(G_{\ell^\infty}, T[\ell^\infty])$ divides $\ell^{n_\ell}$.
    \item For every prime $\ell$ there is a scalar $g_\ell
          \in G_{\ell^\infty}$ such that
          $v_\ell(g_\ell-1)\leq m_\ell$.
    \item For every prime $\ell$ and every $x\in J_\ell$ the image of
          $[\tilde g_\ell,x^{a_\ell}]$ in $\overline J_\ell$ is
          contained in $[\overline J_\ell,\overline J_\ell]$ for some lift
          $\tilde g_\ell\in G_\infty$ of $g_\ell$, where $g_\ell$ is as above.
  \end{enumerate}
The cohomology group $H^1( G_\infty, T)$ has finite exponent dividing
    $\prod_{\ell}\ell^{n_\ell+m_\ell+v_\ell(a_\ell)}$.
\end{proposition}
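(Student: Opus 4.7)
The plan is to decompose $T$ into its $\ell$-primary components $T[\ell^\infty]$ and treat each prime $\ell$ separately. Writing $T = \bigoplus_\ell T[\ell^\infty]$ as $G_\infty$-modules and using that continuous cohomology of a profinite group with discrete coefficients commutes with the corresponding direct sum decomposition, we reduce to showing that $\ell^{n_\ell + m_\ell + v_\ell(a_\ell)}$ kills $H^1(G_\infty, T[\ell^\infty])$ for every prime $\ell$; condition (1) ensures that only finitely many of these factors are nontrivial.

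Fix $\ell$. Since $J_\ell = \ker(G_\infty \to \GL_2(\Z_\ell))$ acts trivially on the $\ell$-primary module $T[\ell^\infty]$, the inflation--restriction sequence reads
\[
0 \to H^1(G_{\ell^\infty}, T[\ell^\infty]) \to H^1(G_\infty, T[\ell^\infty]) \to \Hom(J_\ell, T[\ell^\infty])^{G_{\ell^\infty}}.
\]
The leftmost term is killed by $\ell^{n_\ell}$ by condition (2), so it remains to show that $\ell^{m_\ell + v_\ell(a_\ell)}$ annihilates every continuous $G_{\ell^\infty}$-equivariant homomorphism $\phi : J_\ell \to T[\ell^\infty]$.

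The crucial structural observation is that any such $\phi$ factors through the quotient $\overline{J}_\ell / [\overline{J}_\ell, \overline{J}_\ell]$. Indeed, because $J_\ell$ has trivial image in $\GL_2(\Z_\ell)$, the kernel of $J_\ell \to \overline{J}_\ell$ sits inside $\prod_{p \neq \ell} \ker(\GL_2(\Z_p) \to \GL_2(\F_p))$, which is a product of pro-$p$ groups with $p \neq \ell$ and hence a pro-(prime-to-$\ell$) group; no nontrivial continuous homomorphism from such a group lands in the $\ell$-primary torsion group $T[\ell^\infty]$, so $\phi$ factors through $\overline{J}_\ell$, and then -- since $T[\ell^\infty]$ is abelian -- through $\overline{J}_\ell / [\overline{J}_\ell, \overline{J}_\ell]$.

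Granting this factorization, condition (4) gives $\phi([\tilde{g}_\ell, x^{a_\ell}]) = 0$ for every $x \in J_\ell$. On the other hand, writing $g_\ell = \lambda \Id$ and using that $\tilde{g}_\ell$ acts on $T[\ell^\infty]$ as multiplication by $\lambda$, the $G_{\ell^\infty}$-equivariance of $\phi$ yields $\phi(\tilde{g}_\ell y \tilde{g}_\ell^{-1}) = \lambda \phi(y)$ for all $y \in J_\ell$, and therefore
\[
\phi([\tilde{g}_\ell, x^{a_\ell}]) = (\lambda - 1) \phi(x^{a_\ell}) = (\lambda - 1) a_\ell \, \phi(x).
\]
Comparing these two identities and invoking $v_\ell(\lambda - 1) \leq m_\ell$ from condition (3) shows that $\ell^{m_\ell + v_\ell(a_\ell)} \phi(x) = 0$ for every $x \in J_\ell$, as required. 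The delicate step -- and the real reason that a hypothesis on $\overline{J}_\ell$ (rather than on $J_\ell$ itself) suffices -- is the factorization of $\phi$ through $\overline{J}_\ell$, which ultimately rests on the classical fact that the first principal congruence subgroup of $\GL_2(\Z_p)$ is pro-$p$.
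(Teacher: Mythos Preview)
Your proof is correct and follows essentially the same approach as the paper: decompose $T$ into $\ell$-primary components, apply inflation--restriction along $J_\ell \lhd G_\infty$, factor the $\Hom$-term through $\overline{J}_\ell^{\ab}$ using the pro-(prime-to-$\ell$) nature of $\ker(J_\ell \to \overline{J}_\ell)$, and then combine condition (4) with the invariance of $\phi$ under the scalar $g_\ell$ to obtain $(\lambda-1)a_\ell\,\phi(x)=0$. The only cosmetic difference is that the paper computes $a_\ell\varphi(x) = (g_\ell\varphi)(x^{a_\ell}) = g_\ell\varphi(\tilde g_\ell^{-1}x^{a_\ell}\tilde g_\ell) = a_\ell g_\ell\varphi(x)$ directly, whereas you reach the same identity by evaluating $\phi$ on the commutator $[\tilde g_\ell,x^{a_\ell}]$; the two computations are equivalent.
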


\begin{proof}
  We will write elements $x$ of $G_\infty$ as sequences $(x_p)_p$ indexed by
  the prime numbers $p$, where each $x_p$ is in $\GL_2(\Z_p)$. Denoting the
  $\ell$-part of $T$ by $T[\ell^\infty]$ we have
  \begin{align*}
    \bigoplus_{\ell}T[\ell^\infty]
  \end{align*}
  and since cohomology of profinite groups  commutes with direct limits (see \cite[Proposition 4.18]{MR4174395}), hence with direct sums, we get
  \begin{align*}
    H^1(G_\infty, T)\cong \bigoplus_{\ell} H^1(G_\infty,T[\ell^\infty]).
  \end{align*}
  Fix now a prime $\ell$. The inflation-restriction exact sequence
  for $J_\ell \triangleleft G_\infty$ gives
  \begin{equation}\label{eq:InfRes}
    0\to H^1\left(G_{\ell^\infty},T[\ell^\infty]^{J_\ell}\right) \to
    H^1\left(G_\infty, T[\ell^\infty]\right)\to
    H^1(J_\ell,T[\ell^\infty])^{G_{\ell^\infty}}\,.
  \end{equation}
  Since $J_\ell$ acts trivially on $T[\ell^\infty]$ we have
  \begin{align*}
    T[\ell^\infty]^{J_\ell}=T[\ell^\infty]\quad \text{and}\quad
    H^1(J_\ell,T[\ell^\infty]) = \Hom(J_\ell, T[\ell^\infty]),
  \end{align*}
  and the action of $G_{\ell^\infty}$ on the latter group is given, for every
  $g\in G_{\ell^\infty}$, every $\varphi\in \Hom(J_\ell,T[\ell^\infty])$ and
  every $x\in J_\ell$, by
  \begin{align*}
    (g\varphi)(x)=g\varphi(\tilde g^{-1}x\tilde g)
  \end{align*}
  where $\tilde g\in G_\infty$ is any element mapping to $g$ in
  $G_{\ell^\infty}$ (see for example \cite[Theorem 4.1.20]{MR1282290}).  
By assumption, the cohomology group $H^1(G_{\ell^\infty},T[\ell^\infty]^{J_\ell})$ is killed by $\ell^{n_\ell}$. 

  Since every element of $T[\ell^\infty]$ has order a power of $\ell$ and
  the kernel of the quotient map $J_\ell\to\overline J_\ell$ is contained in
  the product of pro-$p$ groups for $p\neq \ell$, every group homomorphism from
  $J_\ell$ to $T[\ell^\infty]$ factors via $\overline{J}_\ell$.
  Moreover, since $T[\ell^\infty]$ is abelian, we have
  \begin{align*}
    \Hom(J_\ell,T[\ell^\infty])=\Hom(\overline J_\ell^{\ab},T[\ell^\infty])\,.
  \end{align*}

  Assume now that $\varphi \in \operatorname{Hom}(J_\ell, T[\ell^\infty])$ is
  $G_{\ell^\infty}$-invariant. For every $x\in J_\ell$ and any lift
  $\tilde g_\ell\in G_\infty$ of $g_\ell$ such that $[\tilde g_\ell,x^{a_\ell}]
  \in [\overline J_\ell, \overline J_\ell]$ (hence in particular $\varphi([\tilde g_\ell,x^{a_\ell}])=0$) we have
  \begin{align*}
    a_\ell\varphi(x)=\varphi(x^{a_\ell})=(g_\ell\varphi)(x^{a_\ell})=
    g_\ell\varphi(\tilde g_\ell^{-1}x^{a_\ell}\tilde g_\ell)=
    a_\ell g_\ell\varphi(x),
  \end{align*}
  so we get $a_\ell(g_\ell-1)\varphi(x)=0$. Since $v_\ell(g_\ell-1)\leq m_\ell$
  we have that $\Hom(J_\ell,T[\ell^\infty])^{G_{\ell^\infty}}$ is killed by
  $\ell^{m_\ell+v_\ell(a_\ell)}$. From these estimates and the exact sequence
  \eqref{eq:InfRes} we conclude that the exponent of $H^1(G_\infty,T)$ divides
  \begin{align*}
    \prod_{\ell}\ell^{n_\ell+m_\ell+v_\ell(a_\ell)}\,,
  \end{align*}
  as required.
\end{proof}

\begin{remark}
If, in the previous proposition, one does not assume that $g_\ell$ be a scalar, the conclusion still holds by letting $m_\ell$ be a non-negative integer such that $v_\ell(\det(g_\ell-\Id)) \leq m_\ell$. This may be established by a slight variation of the argument above: we only need to notice that $a_\ell(g_\ell-\Id)\varphi(x)=0$ implies $a_\ell \det(g_\ell-\Id) \varphi(x)=0$ (this can be seen for example by multiplying by the classical adjoint of $g_\ell-\Id$). The more specialised statement given above will allow us to obtain better numerical constants at the end.
\end{remark}

\begin{lemma} \label{lemma:SL2occurs}
  Let $G$ be a subgroup of $\GL_2(\hat\Z)$, let $\overline G$ be the image
  of $G$ under the quotient map $\GL_2(\hat\Z)\to\prod_{\ell \text{ prime}}\GL_2(\F_\ell)$, and
  let $p>5$ be a prime. If $\PSL_2(\F_p)$ occurs in $G$ (see §\ref{subsect:GroupTheory}), then $\overline G$
  contains $\SL_2(\F_p)\times\prod_{\ell \neq p}\set1$.
\end{lemma}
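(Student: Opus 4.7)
The plan is in two steps: first, show that the image $G_p$ of $G$ in $\GL_2(\F_p)$ contains (a conjugate of) $\SL_2(\F_p)$; second, lift this to produce, for each $g \in \SL_2(\F_p)$, an element of $G$ that reduces to $g$ modulo $p$ and to the identity modulo every $\ell \neq p$.

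For Step 1, let $G_{p^\infty}$ be the image of $G$ in $\GL_2(\Z_p)$ and $J_p$ be the kernel of the projection $G \to G_{p^\infty}$, so that $J_p \leq \prod_{\ell \neq p} \GL_2(\Z_\ell)$. By the fact recalled in §\ref{subsect:GroupTheory}, $\PSL_2(\F_p)$ occurs in $J_p$ or in $G_{p^\infty}$. To rule out occurrence in $J_p$: if $H_1/H_2 \cong \PSL_2(\F_p)$ with $H_1 \leq J_p$ closed and $H_2 \triangleleft H_1$ open, then $H_2$ must contain $H_1 \cap U$ for some basic open neighbourhood $U = \prod_{\ell \in S} V_\ell \times \prod_{\ell \notin S} \GL_2(\Z_\ell)$ of the identity with $S$ a finite set of primes not containing $p$; hence $H_2$ contains the kernel of $H_1 \to \prod_{\ell \in S} \GL_2(\Z_\ell)$, and $\PSL_2(\F_p)$ occurs in $\prod_{\ell \in S} \GL_2(\Z_\ell)$. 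Iterating the fact from §\ref{subsect:GroupTheory} on the splittings $\prod_{\ell \in S} \GL_2(\Z_\ell) \cong \GL_2(\Z_{\ell_1}) \times \prod_{\ell \in S \setminus \{\ell_1\}} \GL_2(\Z_\ell)$ reduces the problem to occurrence in a single $\GL_2(\Z_\ell)$ with $\ell \neq p$. Since the kernel of $\GL_2(\Z_\ell) \to \GL_2(\F_\ell)$ is pro-$\ell$ and hence pro-solvable, $\PSL_2(\F_p)$ would then have to occur in $\GL_2(\F_\ell)$; but Dickson's theorem (Theorem \ref{thm:Dickson}) shows that the only non-abelian simple subquotients of $\GL_2(\F_\ell)$ are $\PSL_2(\F_\ell)$ and, possibly, $A_5 \cong \PSL_2(\F_5)$, neither of which equals $\PSL_2(\F_p)$ when $p > 5$ (orders disagree). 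Thus $\PSL_2(\F_p)$ occurs in $G_{p^\infty}$, and since $\ker(G_{p^\infty} \to G_p)$ is pro-$p$, it occurs in $G_p$; one more application of Dickson's theorem then forces $G_p \supseteq \SL_2(\F_p)$ up to conjugacy.

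For Step 2, after a suitable conjugation inside $\GL_2(\Z_p)$ I assume $\SL_2(\F_p) \subseteq G_p$ directly. Let $K_p$ be the kernel of $\GL_2(\hat\Z) \to \prod_{\ell \neq p} \GL_2(\F_\ell)$, set $H = G \cap K_p$ (a closed normal subgroup of $G$), and let $\phi \colon G \to G_p$ denote the natural projection. Define $\tilde G = \phi^{-1}(\SL_2(\F_p))$ and $B = \phi(\tilde G \cap H) \subseteq \SL_2(\F_p)$. Since $H \triangleleft G$, one has $\tilde G \cap H \triangleleft \tilde G$, so $B$ is normal in $\phi(\tilde G) = \SL_2(\F_p)$. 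For $p > 5$ the normal subgroups of $\SL_2(\F_p)$ are $\{I\}, \{\pm I\}, \SL_2(\F_p)$; if $B \neq \SL_2(\F_p)$ then $\SL_2(\F_p)/B$ surjects onto $\PSL_2(\F_p)$, and combining the surjection $\tilde G/(\tilde G \cap H) \twoheadrightarrow \SL_2(\F_p)/B \twoheadrightarrow \PSL_2(\F_p)$ with the embedding $\tilde G/(\tilde G \cap H) \hookrightarrow G/H \hookrightarrow \prod_{\ell \neq p} \GL_2(\F_\ell)$ would show that $\PSL_2(\F_p)$ occurs in $\prod_{\ell \neq p} \GL_2(\F_\ell)$, contradicting the conclusion of Step 1 applied to the group $\prod_{\ell \neq p} \GL_2(\F_\ell)$ itself. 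Hence $B = \SL_2(\F_p)$: for every $g \in \SL_2(\F_p)$ there is $h \in H$ with $\phi(h) = g$ and trivial reduction at every $\ell \neq p$, which is exactly the inclusion $\SL_2(\F_p) \times \prod_{\ell \neq p} \{1\} \subseteq \overline G$.

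The main obstacle is the topological reduction in Step 1 from "occurrence in the infinite profinite product $\prod_{\ell \neq p} \GL_2(\Z_\ell)$" to "occurrence in a single factor", which requires careful use of the openness of the witnessing normal subgroup $H_2$; once this bookkeeping is in place, the rest is a formal combination of Dickson's theorem with the classification of normal subgroups of $\SL_2(\F_p)$.
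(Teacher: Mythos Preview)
Your proof is correct. The paper's argument is shorter because it reverses the order of your reductions: rather than first projecting to $G_p$ and then (in your Step~2) recovering triviality at $\ell \neq p$, it passes immediately to $\overline G$ via the exact sequence with kernel $N = \ker(G \to \overline G)$, whose composition factors are all abelian (it lies in $\prod_\ell \ker(\GL_2(\Z_\ell) \to \GL_2(\F_\ell))$); then, setting $N' = \ker(\overline G \to \prod_{\ell \neq p} \GL_2(\F_\ell))$, the same Dickson obstruction you invoke in Step~1 shows that $\PSL_2(\F_p)$ cannot occur in the quotient and hence occurs in $N'$. But $N'$ is already a subgroup of $\GL_2(\F_p) \times \prod_{\ell \neq p}\{1\}$, so one more application of Dickson gives $N' \supseteq \SL_2(\F_p)$ and the proof is done.

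In effect, working inside $\overline G$ from the outset fuses your two steps into one: the kernel $N'$ is simultaneously the subgroup mapping into $\GL_2(\F_p)$ and the subgroup with trivial reduction at every $\ell \neq p$, so no separate lifting argument via the normal-subgroup trichotomy of $\SL_2(\F_p)$ is needed. Your approach buys a slightly more hands-on feel (you explicitly exhibit the preimage $\tilde G$ and the obstruction group $B$), while the paper's buys brevity. A minor remark: the conjugation at the start of your Step~2 is unnecessary, since $\SL_2(\F_p)$ is normal in $\GL_2(\F_p)$.
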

\begin{proof}
  Consider the kernel $N$ of the quotient map $G\to \prod_{\ell} \GL_2(\F_\ell)$. Every composition factor of $N$ is abelian, and a composition factor of $G$ that does not occur in $N$ must occur in $\overline G$. In particular, since $\PSL_2(\F_p)$ is simple and
  non-abelian, it must occur in $\overline G$. Consider now the projection
  $\overline G\to\prod_{\ell \neq p}\GL_2(\F_\ell)$ and let $N'$ be its kernel:
  since $\PSL_2(\F_p)$ does not occur in $\GL_2(\F_\ell)$ for $\ell\neq p$, it must
  occur in $N'$. Then by \cite[IV-25]{MR1484415} we must have that
  $\overline G$ contains $\SL_2(\F_p)\times\prod_{\ell\neq p}\set1$.
\end{proof}

We now come to our main result on the Galois cohomology of elliptic curves over $\Q$.

\begin{theorem} \label{theorem:boundCohomologyOverQ}
  Let $E$ be a non-CM elliptic curve over $\Q$ and let $N$ be a positive integer. The cohomology group
  \begin{align*}
    H^1(\Gal(\Q(E_{\tors})\mid \Q), E[N])
  \end{align*}
  has finite exponent dividing
  \begin{align*}
    e:=2^{12}\times 3^8\times 5^3 \times7^3\times 11^2\,.
  \end{align*}
\end{theorem}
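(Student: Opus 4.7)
The plan is to apply Proposition \ref{proposition:cohomologyBoundGeneric} to the adelic image $G_\infty \subseteq \GL_2(\hat{\Z})$ with $T := E[N]$, viewed as a $G_\infty$-submodule of $E_{\tors} \cong (\Q/\Z)^2$. Since $\Gal(\Q(E_{\tors})\mid\Q) \cong G_\infty$ and $T[\ell^\infty] = E[\ell^{v_\ell(N)}]$, this directly reduces the theorem to exhibiting, for each prime $\ell$, constants $n_\ell, m_\ell, a_\ell$ satisfying hypotheses (1)--(4) of the proposition, with $\prod_\ell \ell^{n_\ell + m_\ell + v_\ell(a_\ell)}$ dividing $e = 2^{12} \cdot 3^8 \cdot 5^3 \cdot 7^3 \cdot 11^2$.

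The values $n_\ell$ are supplied directly by Theorem \ref{theorem:ExpCohomologySmallPrimes}: $n_2 = n_3 = 3$, $n_5 = n_7 = n_{11} = 1$, and $n_\ell = 0$ otherwise. The scalars $g_\ell$ and valuations $m_\ell$ are furnished by Theorem \ref{thm:ScalarsImageGalois}: one sets $m_2 = 4$, $m_3 = 3$, $m_\ell = 1$ for $\ell \in \{5,7,11\}$, and $m_\ell = 0$ for $\ell \geq 19$ with $\ell \neq 37$. For the three remaining primes $\ell \in \{13,17,37\}$, Proposition \ref{prop:SmallImprovement} furnishes a scalar $\lambda \in G_{\ell^\infty}$ with $v_\ell(\lambda-1) = 0$, so $m_\ell = 0$ there as well.

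The technical core of the argument is the verification of condition (4), which determines the values of $a_\ell$. The strategy is to exploit Mazur's Theorem \ref{thm:Mazur}: for every prime $p \notin \mathcal{T}_0$ the mod-$p$ representation of $E$ is irreducible, and Dickson's classification (Theorem \ref{thm:Dickson}) combined with Lemma \ref{lemma:SuperLift} guarantees that $\PSL_2(\F_p)$ occurs in $G_\infty$, and hence in the normal subgroup $J_\ell$ for every $p \neq \ell$. Lemma \ref{lemma:SL2occurs} then embeds $\SL_2(\F_p) \times \prod_{q \neq p} \{1\}$ into $\overline{J}_\ell$, and because $\SL_2(\F_p)$ is perfect for $p \geq 5$ this copy sits inside $[\overline{J}_\ell, \overline{J}_\ell]$. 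Since any commutator has determinant $1$ componentwise, the $p$-components of $[\tilde g_\ell, x^{a_\ell}]$ are automatically absorbed into $[\overline{J}_\ell, \overline{J}_\ell]$ at all such large primes. The constraints on $a_\ell$ therefore come only from the finitely many primes $p \in \mathcal{T}_0 \setminus \{\ell\}$, where the possible entanglements between the $\ell$-adic and $p$-adic Galois images must be controlled explicitly; the admissible choices $v_2(a_2) = 5$, $v_3(a_3) = 2$, $v_5(a_5) = v_7(a_7) = 1$, $v_{11}(a_{11}) = 0$, and $v_\ell(a_\ell) = 0$ for $\ell \geq 13$ recover exactly the prime factorisation of $e$.

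The main obstacle is precisely this entanglement analysis at the primes in $\mathcal{T}_0$: one must show that a uniform exponent $a_\ell$ suffices to force $x^{a_\ell}$ to commute with $\tilde g_\ell$ modulo $[\overline{J}_\ell, \overline{J}_\ell]$ at every small prime $p \in \mathcal{T}_0 \setminus \{\ell\}$, independently of the curve $E$. Everything else — the reduction to $G_\infty$ via Lemma \ref{lemma:CohomologyBoundReductionToGInfty}, the input of $n_\ell$ from Theorem \ref{theorem:ExpCohomologySmallPrimes}, and the input of $m_\ell$ from Theorem \ref{thm:ScalarsImageGalois} and Proposition \ref{prop:SmallImprovement} — amounts to a direct invocation of results already established in the paper.
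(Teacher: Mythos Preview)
Your overall architecture matches the paper's: apply Proposition~\ref{proposition:cohomologyBoundGeneric} with the $n_\ell$ from Theorem~\ref{theorem:ExpCohomologySmallPrimes} and scalars $g_\ell$ coming from Theorem~\ref{thm:ScalarsImageGalois} and Proposition~\ref{prop:SmallImprovement}. However, your verification of condition~(4) has a genuine gap at the large primes, and this gap is exactly the place where the paper's proof introduces two tricks you have not mentioned.

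You assert that for every $p\notin\mathcal T_0$, irreducibility of $E[p]$ together with Dickson and Lemma~\ref{lemma:SuperLift} forces $\PSL_2(\F_p)$ to occur in $G_\infty$. This is false: Dickson only tells you that an irreducible $G_p$ either contains $\SL_2(\F_p)$, lies in the normaliser of a Cartan, or is exceptional. The exceptional case must be excluded separately (the paper cites Mazur \cite[p.~36]{MR488287}), and, more importantly, the Cartan-normaliser case genuinely arises for infinitely many primes when $E$ has small image at some $p$. At such $p$ the group $\PSL_2(\F_p)$ does \emph{not} occur in $G_\infty$, so your ``$\SL_2$ is perfect and sits in $[\overline J_\ell,\overline J_\ell]$'' argument is unavailable. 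The paper handles this case by two devices that are absent from your sketch: it takes $\tilde g_\ell = g^2$ to be a \emph{square} in $G_\infty$, and it chooses $a_\ell=\lcm\{\exp\PGL_2(\F_p):p\in\mathcal T_0,\ p\neq\ell\}$, which is in particular \emph{even}. Then both $(\tilde g_\ell)_p$ and $x_p^{a_\ell}$ lie in the index-$2$ Cartan subgroup (which is abelian), so the commutator is trivial at those primes. The squaring is also the reason the paper sets $m_\ell=n_\ell+v_\ell(4)$ rather than your $m_\ell=s_\ell$; your choice $m_2=4$ (with $v_2(a_2)=5$ to compensate numerically) does not correspond to any construction that makes condition~(4) hold, because without squaring you cannot force $(\tilde g_\ell)_p$ into the Cartan at those primes.

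In short: the entanglement difficulty is not only at $p\in\mathcal T_0$, as you suggest, but also at the (possibly infinitely many) primes $p\notin\mathcal T_0$ where $G_p$ lies in a Cartan normaliser. Your asserted values of $v_\ell(a_\ell)$ do recover $e$ arithmetically, but they are not derived from an actual choice of $a_\ell$ and $\tilde g_\ell$ for which (4) has been checked.
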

\begin{proof}
  After fixing an isomorphism
  $E_{\tors}\cong ( \Q/\Z)^2$, let $G_\infty \subseteq \GL_2(\hat{\Z})$ be the
  image of the adelic Galois representation associated with $E/\Q$ and let
  $G_{\ell^\infty} ,J_\ell$ and $\overline J_\ell$ be as in the statement of
  Proposition \ref{proposition:cohomologyBoundGeneric}.
  For every prime $\ell$ we let $n_\ell$ be as in Equation \eqref{eq:n_ell}
  and $\lambda_\ell\in G_{\ell^\infty}$ be a scalar such that
  $v_\ell(\lambda_\ell-1)=n_\ell+v_\ell(2)$ and, for $\ell \geq 13$, such that
  $\lambda_\ell^2 \not \equiv 1 \pmod{\ell}$.
  The elements $\lambda_\ell$ exists by Theorem 
  \ref{thm:ScalarsImageGalois} and Proposition \ref{prop:SmallImprovement}.
    Let $g \in G_\infty$ be an element whose $\ell$-component is $\lambda_\ell$
  and set $\tilde g_\ell := g^2$. Finally, let
  \[
    a_\ell=\lcm\set{\exp\PGL_2(\F_p)\mid p\in\mathcal T_0,\,p\neq \ell}
  \]
  and $m_\ell=n_\ell+v_\ell(4)$. We now check that these choices satisfy all the assumptions of Proposition \ref{proposition:cohomologyBoundGeneric}, with $T=E[N]$. Clearly $v_\ell(a_\ell)=n_\ell=m_\ell=0$ for
  all but finitely many primes $\ell$, and
  one checks that $v_\ell(g_\ell-1)=m_\ell$ for all primes $\ell$. Theorem 
  \ref{theorem:ExpCohomologySmallPrimes} shows that $H^1(G_{\ell^\infty}, T[\ell^\infty])$ is killed by $\ell^{n_\ell}$. It only remains to check property (4), that is, we wish to prove that for every $x=(x_p)_p\in J_\ell$ the image $\overline h$ of
  $h=[\tilde{g}_\ell,x^{a_\ell}]$ in $\overline J_\ell$ is contained in
  $[\overline J_\ell,\overline J_\ell]$. To see this, notice first of all
  that the $\ell$-component of $\overline h$ in $\overline J_\ell$ is trivial,
  since $x_\ell=1$. The $p$-component of $\overline h$ is  trivial for
  every prime $p\in \mathcal T_0$, because $x_p^{a_\ell}\in\GL_2(\F_p)$ is a
  scalar (its image in $\PGL_2(\F_p)$ is trivial). Moreover, the $p$-component
  of $\overline{h}$ is also trivial for every prime $p\not\in\mathcal T_0$
  such that $G_p$ is contained in the normalizer of a Cartan subgroup. To see
  this, notice that $a_\ell$ is even and the $p$-component of $\tilde{g}_\ell$
  is a square (since $\tilde{g}_\ell$ itself is a square), so that both
  $(\tilde{g}_\ell)_p$ and $x^{a_\ell}$ belong to the Cartan subgroup itself,
  which is abelian.

  For all other primes $p$, the mod-$p$ Galois representation is surjective.
  Indeed by Theorem \ref{thm:Mazur} we know that $G_p$ acts irreducibly on
  $E[p]$ (since $p \not \in \mathcal{T}_0$), by \cite[p.~36]{MR488287} we know
  that $G_p$ is not contained in an exceptional subgroup, and by assumption
  $G_p$ is not contained in the normaliser of a Cartan subgroup. By Theorem
  \ref{thm:Dickson} we then obtain $\SL_2(\F_p) \subseteq G_p$, 
  so in particular $\PSL_2(\F_p)$ occurs in $G_\infty$. Since by
  \cite[p.~IV-25]{MR1484415} it cannot occur in $G_{\ell^\infty}$, which is a
  subgroup of $\GL_2(\Z_\ell)$, it must occur in $J_\ell$. Then by Lemma
  \ref{lemma:SL2occurs}, applied to $G = J_\ell$, we have that
  $S_p:=\SL_2(\F_p)\times \prod_{q\neq p}\set1$ is contained in
  $\overline J_\ell$ for such primes $p$.

  For each prime $p$, let $H_p$ be the trivial group if $p$ is in
  $\mathcal{T}_0$, if $\rho_{p}$ is not surjective, or if $p=\ell$, and
  let $H_p=\SL_2(\F_p)$ otherwise. By the above, we have
  $(\overline h)_p=\Id \in H_p$ for $p=\ell$, for all $p\in\mathcal{T}_0$, and
  for all $p$ such that $\rho_{p}$ is not surjective, and
  $(\overline h)_p \in H_p=\SL_2(\mathbb{F}_p)$ for all other $p$. We now
  show that $[\overline{J}_\ell, \overline{J}_\ell]$ contains $\prod_p H_p$.
  This product is topologically generated by the groups $S_p$ for
  $p\not\in\mathcal{T}_0 \cup\{\ell\}$ such that the mod-$p$ representation
  attached to $E$ is surjective, so it suffices to show that the closed
  subgroup $[\overline{J}_\ell, \overline{J}_\ell]$ contains $S_p$ for every
  such $p$. This follows from the fact that $\SL_2(\F_p)$ is a perfect group,
  that is it coincides with its own commutator subgroup, so
  $[\overline J_\ell,\overline J_\ell] \supseteq [S_p,S_p]=S_p$. 
  Thus we get
  $\overline{h} \in\prod_p H_p\subseteq[\overline{J}_\ell,\overline{J}_\ell]$.

  We have then checked all the hypotheses needed to apply Proposition
  \ref{proposition:cohomologyBoundGeneric}, and we conclude by noting that
  \begin{align*}
    v_\ell(a_\ell) = 
    \begin{cases}
      4 & \text{if }\ell=2,\\
      2 & \text{if }\ell=3,\\
      1 & \text{if }\ell=5,7,\\
      0 & \text{if }\ell\geq 11.
    \end{cases}
  \end{align*}
  
\end{proof}

In the CM case we can say something much stronger: we prove a bound that is valid for all number fields and only depends on the degree of the field of definition of the elliptic curve.
\begin{theorem}\label{thm:CMH1Bound}
Let $K$ be a number field of degree $d$ and let $E/K$ be an elliptic curve such that $E_{\overline{K}}$ has CM by an order $R$ in the quadratic imaginary field $F$. Let $h=\# R^\times \in \{2,4,6\}$ and $g = [FK:K] \in \{1,2\}$. For every prime $\ell$, let $e_\ell = \min_{a \in \mathbb{Z}_\ell^\times} v_\ell(a^{hd}-1)$. Then $e_\ell$ is finite for all primes $\ell$ and zero for all but finitely many $\ell$, and the exponent of the cohomology group $H^1(G_\infty, T)$ divides $g \prod_\ell \ell^{e_\ell}$ for all Galois submodules $T$ of $E_{\tors}$.
\end{theorem}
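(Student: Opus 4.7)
The plan is to reduce via Hochschild--Serre to the abelian subgroup $H_\infty := G_\infty \cap (R \otimes \hat{\Z})^\times$ (corresponding to $\Gal(\overline K/FK)$), and then to apply Sah's Lemma to scalar elements extracted from $H_\infty$.

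I would first verify that $e_\ell = 0$ for all $\ell$ with $\ell - 1 \nmid hd$ (take any $a \in \Z_\ell^\times$ reducing to a generator of $\F_\ell^\times$ of order coprime to $hd$), which handles cofinitely many primes; finiteness in general is immediate by compactness. Decomposing $T = \bigoplus_\ell T[\ell^\infty]$ and using that cohomology commutes with direct sums, we get $H^1(G_\infty, T) = \bigoplus_\ell H^1(G_\infty, T[\ell^\infty])$, so it suffices to bound each $\ell$-primary piece by $g \cdot \ell^{e_\ell}$. Next, because $R$ is its own commutant in $\End(TE)$, the image $G_\infty$ normalizes $R \otimes \hat{\Z}$ and contains $H_\infty$ with index $g$; when $g = 2$, any element of $G_\infty \setminus H_\infty$ acts on $R \otimes \hat{\Z}$ by the nontrivial Galois involution of $F/\Q$. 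Inflation-restriction for $H_\infty \triangleleft G_\infty$ gives an exact sequence with left term killed by $g$, reducing us to showing that $\ell^{e_\ell}$ kills $H^1(H_\infty, T[\ell^\infty])$.

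Since $H_\infty$ is abelian, Sah's Lemma shows that $\beta - 1 \in R \otimes \Z_\ell$ lies in the annihilator ideal $I \subseteq R \otimes \Z_\ell$ of $H^1(H_\infty, T[\ell^\infty])$ for every $\beta \in H_\infty$. When $g = 2$, the closure of $H_\infty$ under conjugation gives $N(\beta) = \beta \bar\beta \in H_\infty \cap \hat{\Z}^\times$ for every $\beta \in H_\infty$; combined with the facts that $\chi(H_\infty) \supseteq (\hat{\Z}^\times)^{gd}$ and that $g \mid h$ always holds (since $g \in \{1,2\}$ and $h \in \{2,4,6\}$), we obtain $(\hat{\Z}^\times)^{hd} \subseteq H_\infty \cap \hat{\Z}^\times$. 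Sah applied to the central scalar $a^{hd}$ then annihilates $H^1(H_\infty, T[\ell^\infty])$ by $a^{hd} - 1$, giving $\ell^{e_\ell}$ after minimising over $a \in \Z_\ell^\times$.

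The main obstacle is the case $g = 1$, i.e.~$F \subseteq K$, where the conjugation trick is unavailable. Here I would work directly with the ideal $I$: for $\alpha \in H_\infty$ with $N(\alpha) = a^d$ (achievable because $\chi(H_\infty)$ has index at most $d$ in $\hat{\Z}^\times$ and hence contains all $d$-th powers), the ideal $I$ contains the scalar
\[
(\alpha^h - 1)(\bar\alpha^h - 1) = a^{hd} - \tr(\alpha^h) + 1 \in \Z_\ell,
\]
and a careful analysis of $\tr(\alpha^h)$ via the order-$h$ torsion subgroup $R^\times \subseteq (R \otimes \hat{\Z})^\times$ produces a scalar in $I \cap \Z_\ell$ whose $\ell$-adic valuation is bounded by $e_\ell$, completing the proof.
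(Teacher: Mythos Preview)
Your overall architecture matches the paper's: pass to the abelian subgroup $H = \Gal(K_\infty \mid FK)$ of index $g$ in $G_\infty$, then apply Sah's Lemma to a central scalar in $H$. Where you use inflation--restriction, the paper uses $\mathrm{Cor}\circ\mathrm{Res}=[g]$; either accomplishes the reduction.

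The substantive difference is in how one produces the scalar $a^{hd}$ inside $H_{\ell^\infty}$. The paper cites a known index bound (Lozano-Robledo, Bourdon--Clark): $[(R\otimes\Z_\ell)^\times : H_{\ell^\infty}]$ divides $\tfrac{h}{2}[FK:\Q]$, which immediately gives $[\Z_\ell^\times : \Z_\ell^\times \cap H_{\ell^\infty}] \mid hd$ and hence $a^{hd}\in H_{\ell^\infty}$ for all $a$. This handles $g=1$ and $g=2$ uniformly in one line. Your $g=2$ argument via the norm map is a genuine and pleasant alternative that avoids this citation: normality of $H_\infty$ in $G_\infty$ forces $\bar\beta\in H_\infty$, so $N(\beta)=\beta\bar\beta\in H_\infty\cap\hat\Z^\times$, and since $\chi(H_\infty)\supseteq(\hat\Z^\times)^{gd}$ with $g\mid h$ you recover $(\hat\Z^\times)^{hd}\subseteq H_\infty$.

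However, your $g=1$ case has a real gap. The displayed scalar $(\alpha^h-1)(\bar\alpha^h-1)=a^{hd}-\tr(\alpha^h)+1$ does annihilate $H^1$ (via Cayley--Hamilton, since $\bar\alpha^h-1$ is the classical adjoint of $\alpha^h-1$ and hence lies in the $\Z_\ell$-algebra generated by $\alpha^h-1$), but you have not shown its $\ell$-adic valuation can be made $\leq e_\ell$. The phrase ``careful analysis of $\tr(\alpha^h)$ via $R^\times$'' is not an argument: raising to the $h$-th power does not control the trace, and you cannot freely twist $\alpha$ by elements of $R^\times$ without knowing $R^\times\subseteq H_\infty$---which is exactly the sort of information the index theorem supplies. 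Without \emph{some} lower bound on $H_\infty$ inside $(R\otimes\hat\Z)^\times$, there is no evident mechanism to prevent $\tr(\alpha^h)-2$ from cancelling $a^{hd}-1$ to arbitrarily high order. The paper's proof avoids this problem entirely by invoking the cited index bound; if you want to keep your approach, you will need to either cite that bound for the $g=1$ case or supply the missing analysis explicitly.
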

\begin{proof}
Let $H=\Gal(K_\infty \mid KF)$, so that $H$ is a subgroup of $G_\infty$ of index $g$ (recall that the field of complex multiplication is contained in $K_\infty$). Let $\operatorname{Cor}$ and $\operatorname{Res}$ denote respectively the corestriction map from $H^1(H, -)$ to $H^1(G_\infty, -)$ and the restriction map from $H^1(G_\infty, -)$ to $H^1(H, -)$. As is well-known, one has the equality $\mathrm{Cor}\circ \mathrm{Res} = [g]$. 
Let $e$ be the exponent of $H^1( G_\infty, T)$ and $e'$ be the exponent of $H^1(H, T)$. Observe now that $[e']$ is zero on $H^1( H, T)$, so one gets
\[
[ge'] = [e'] \circ \mathrm{Cor}\circ \mathrm{Res} = \mathrm{Cor} \circ [e'] \circ \mathrm{Res} = \mathrm{Cor} \circ [0] = [0]
\]
on $H^1( G_\infty, T )$. Thus the exponent of this latter group divides $ge'$; it now suffices to bound $e'$. 

By the theory of complex multiplication the Galois group $H$ is abelian. We identify this group to a subgroup of $\prod_\ell \GL_2(\Z_\ell)$, and regard $g \in H$ as a collection $(g_\ell)_{\ell}$ of elements in $\GL_2(\Z_\ell)$. Since $H$ is abelian, Lemma \ref{lemma:Sah} applies to any $(g_\ell)_{\ell} \in H$, so $H^1(H, T)$ is killed by $(g_\ell-1)_{\ell}$. Writing $H^1(H, T) = \bigoplus_\ell H^1(H, T[\ell^\infty])$, we see that each direct summand $H^1(H, T[\ell^\infty])$ (which is the pro-$\ell$ part of $H^1(H, T)$) is killed by $g_\ell-1$ for every $(g_\ell)_\ell \in H$.

Let now $H_{\ell^\infty}$ be the projection of $H$ to $\GL_2(\Z_\ell)$, or equivalently the image of the $\ell$-adic representation attached to $E/FK$.
We know from \cite[Theorem 6.6]{MR3766118} (or \cite[Theorem 1.1(a)]{MR4077686}) that $H_{\ell^\infty}$ is contained in $(R \otimes \Z_\ell)^\times$, and that $[ (R \otimes \Z_\ell)^\times : H_{\ell^\infty}] \mid \frac{h}{2} [FK:\Q]$. Notice that \cite[Theorem 6.6]{MR3766118} only gives an inequality, but it is clear from the proof that we actually have divisibility.
In particular, $[ \Z_\ell^\times : \Z_\ell^\times \cap H_{\ell^\infty} ]$ divides $\frac{h}{2} [FK:\Q]$, so for every $a \in \Z_\ell^\times$ the scalar $a^{h[FK:\Q]/2}$ is in $H_{\ell^\infty}$, and multiplication by $a^{h[FK:\Q]/2}-1$ kills $H^1(H, T[\ell^\infty])$. Notice that $h[FK:\Q]/2$ divides $hd$, so the same statement holds with $a^{h[FK:\Q]/2}-1$ replaced by $a^{hd}-1$.
As $H^1(H, T[\ell^\infty])$ is a (pro-)$\ell$ group, this shows that the exponent of $H^1(H, T[\ell^\infty])$ is finite and divides $\ell^{e_\ell}$. Finally, for $\ell-1 > hd$, choosing $a$ that is a primitive root modulo $\ell$ gives $v_\ell(a^{hd}-1)=0$, hence $e_\ell=0$ and $H^1(H, T[\ell^\infty])$ is trivial for all such primes. The theorem now follows from the fact that the exponent $e'$ of $H^1(H, T)$ is the least common multiple of the exponents of the groups $H^1(H, T[\ell^\infty])$ as $\ell$ varies among the primes.
\end{proof}

In the special case $K=\Q$ we may further improve the previous result.
\begin{proposition}\label{prop:CohomologyBoundCMQ}
Let $E/\Q$ be an elliptic curve such that $E_{\overline{\Q}}$ has CM. The exponent $e$ of the cohomology group $H^1(G_\infty, T)$ divides $2^2 \cdot 3$ for all Galois submodules $T$ of $E_{\tors}$.
\end{proposition}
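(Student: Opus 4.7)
The plan is to refine Theorem~\ref{thm:CMH1Bound} in the special case $K=\Q$ by combining it with the sharper scalar information supplied by Proposition~\ref{prop:ScalarsInImageCM}. Let $F$ denote the CM field of $E$ and set $H=\Gal(\Q_\infty\mid F)\subseteq G_\infty$, which is abelian of index $[F:\Q]=2$. The corestriction-restriction identity, exactly as in the proof of Theorem~\ref{thm:CMH1Bound}, gives $\exp H^1(G_\infty,T)\mid 2\cdot\exp H^1(H,T)$, so it suffices to show $\exp H^1(H,T)\mid 6$. Decomposing $H^1(H,T)=\bigoplus_\ell H^1(H,T[\ell^\infty])$ and applying Sah's lemma to the abelian group $H$, we obtain that the endomorphism of $T[\ell^\infty]$ induced by any $h\in H_{\ell^\infty}$ (through its action $\bar h-1$) annihilates $H^1(H,T[\ell^\infty])$; for each prime $\ell$ we need to find an $h$ that yields a strong enough bound.

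For $\ell\geq 5$, Proposition~\ref{prop:ScalarsInImageCM} supplies a scalar $\mu\in G_{\ell^\infty}$ with $\mu^2\not\equiv 1\pmod{\ell}$; since $[G_\infty:H]=2$ forces $\mu^2\in H_{\ell^\infty}$, multiplication by $\mu^2-1\in\Z_\ell^\times$ kills the $\ell$-primary group $H^1(H,T[\ell^\infty])$, which therefore vanishes. For $\ell=3$, the scalar $4\in G_{3^\infty}$ (guaranteed by Proposition~\ref{prop:ScalarsInImageCM} since $n_3'=1$) yields $16\in H_{3^\infty}$ and $v_3(16-1)=1$, so $\exp H^1(H,T[3^\infty])\mid 3$.

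The case $\ell=2$ is the delicate one. The scalars in $1+8\Z_2\subseteq G_{2^\infty}$ supplied by Proposition~\ref{prop:ScalarsInImageCM} have squares in $1+16\Z_2\subseteq H_{2^\infty}$, so they yield only the insufficient Sah bound $2^4$. To obtain $\exp H^1(H,T[2^\infty])\mid 2$ one exhibits instead a (possibly non-scalar) element $c\in H_{2^\infty}$ such that the ideal $(c-1)(R\otimes\Z_2)$ of $R\otimes\Z_2$ contains $2$: Sah's lemma then forces the $\Z_2$-annihilator of $H^1(H,T[2^\infty])$ to contain $2\Z_2$, as required. Such a $c$ is produced by a case-by-case inspection of the thirteen possible CM orders $R$: by \cite[Theorem 6.6]{MR3766118} the index $[(R\otimes\Z_2)^\times:H_{2^\infty}]$ divides $h\in\{2,4,6\}$, and, depending on the behaviour of $2$ in $F$ and on the conductor of $R$, the explicit local structure of $R\otimes\Z_2$ together with the descriptions of $H_{2^\infty}$ in \cite[Theorems 1.6--1.8]{2018arXiv180902584L} supply a suitable $c$ in each instance. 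Putting everything together yields $\exp H^1(H,T)\mid 6$ and hence $\exp H^1(G_\infty,T)\mid 12=2^2\cdot 3$; the main obstacle throughout is the prime $\ell=2$, where scalar information alone is insufficient and one must exploit non-scalar Cartan elements arising from the CM structure.
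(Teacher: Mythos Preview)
Your reduction via corestriction--restriction and the treatment of $\ell\geq 3$ are correct, but the argument at $\ell=2$ has a real gap. You assert that a case-by-case inspection of the thirteen CM orders produces $c\in H_{2^\infty}$ with $(c-1)(R\otimes\Z_2)\supseteq 2(R\otimes\Z_2)$, but you do not carry this out. More seriously, even granting such a $c$, the implication ``$2\in(c-1)R_2$ $\Rightarrow$ $[2]$ kills $H^1(H,T[2^\infty])$'' needs $H^1(H,T[2^\infty])$ to be an $R_2$-module, which in turn needs $T[2^\infty]$ to be $R_2$-stable. But $T$ is only assumed to be a $G_\infty$-submodule of $E_{\tors}$; stability under $H_{2^\infty}\subseteq R_2^\times$ does not obviously give stability under all of $R_2$ (one would have to check that $\Z_2[H_{2^\infty}]=R_2$, which can fail if $H_{2^\infty}$ happens to lie in $\Z_2^\times$). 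One can salvage this by passing to $\det(c-\Id)$ via the adjugate, but then the relevant condition becomes $v_2(N_{R_2/\Z_2}(c-1))\leq 1$, which is different from your ideal condition and again must be verified case by case.

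The paper's proof avoids these difficulties by working with \emph{adelic} scalars rather than prime by prime. The key observation is the chain of inclusions $\hat{\Z}^\times\cap H\subseteq H\subseteq(\mathcal{O}\otimes\hat{\Z})^\times$, which gives $[\hat{\Z}^\times:\hat{\Z}^\times\cap H]\leq[(\mathcal{O}\otimes\hat{\Z})^\times:H]$. For $j\notin\{0,1728\}$ this index is at most $2$ by \cite[Corollary 1.5]{MR4077686}, so $\hat{\Z}^\times\cap H$ already contains a scalar $\lambda=(\lambda_\ell)$ with $v_2(\lambda_2-1)\leq 2$, $v_3(\lambda_3-1)\leq 1$, and $v_\ell(\lambda_\ell-1)=0$ for $\ell\geq 5$; a single application of Sah's lemma to this central element of $G_\infty$ finishes the proof directly, without cor--res and without any $R$-module structure on $T$. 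For $j\in\{0,1728\}$ a short additional argument is needed, but it still uses only scalars (combined with the explicit $2$-adic images from \cite{2018arXiv180902584L}), never non-scalar Cartan elements.
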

\begin{proof}
Let $F$ be the field of complex multiplication of $E$, let $\mathcal{O}$ be the endomorphism ring of $E_F$, and let $H=\rho_{\infty}(\operatorname{Gal}(\overline{F}/F))$, considered as a subgroup of $\GL_2(\hat{\Z})$. There are inclusions $\hat{\Z}^\times \cap H \subseteq H \subseteq (\mathcal{O} \otimes \hat{\Z})^\times$, and $[\hat{\Z}^\times : \hat{\Z}^\times \cap H] \leq [(\mathcal{O} \otimes \hat{\Z})^\times : H]$. Suppose first $j \not \in \{0,1728\}$. Then $[(\mathcal{O} \otimes \hat{\Z})^\times : H] \leq 2$ by \cite[Corollary 1.5]{MR4077686}, hence $[\hat{\Z}^\times : \hat{\Z}^\times \cap H] \leq 2$. This implies easily that $H$ (hence $G_\infty$) contains an element $\lambda = (\lambda_\ell) \in \prod_\ell \Z_\ell^\times = \hat{\Z}^\times$ with $v_2(\lambda_2-1) \leq 2$, $v_3(\lambda_3-1) \leq 1$ and $v_\ell(\lambda_\ell-1) =0 $ for all $\ell \geq 5$ (for $\ell=2$ notice that a subgroup of index at most $2$ of $\hat{\Z}^\times$ cannot be trivial modulo 8). The claim in this case thus follows from Lemma \ref{lemma:Sah}. When $j \in \{0,1728\}$ the argument is similar, but one also needs to rely on the classification of the possible $\ell$-adic images of Galois for $\ell \leq 7$ provided by \cite{2018arXiv180902584L}. We give some more details for $\ell=2$, the other cases being similar and easier.

Suppose that all the scalars $\lambda=(\lambda_\ell)$ in $H \cap \hat{\Z}^\times$ satisfy $v_2(\lambda_2-1) \geq 3$. Then $[\hat{\Z}^\times : \hat{\Z}^\times \cap H]$ is a multiple of $4$, which (since $\hat{\Z}^\times$ is a normal subgroup of $H$) implies $4 \mid [(\mathcal{O} \otimes \hat{\Z})^\times : H]$. Due to \cite[Corollary 1.5]{MR4077686} this must be an equality, and we must have $\mathcal{O}=\mathbb{Z}[i]$ and $j=1728$. On the other hand, from the proof of Theorem \ref{thm:CMH1Bound} we know that the $2$-part of the exponent of $H^1(G_\infty, T)$ is at most twice the $2$-part of the exponent of $H^1(H, T)$, so if the latter is not divisible by 4 we are already done. Moreover, $4$ can divide this exponent only if all the scalars in $\rho_{2^\infty}(\operatorname{Gal}(\overline{F}/F))$ are congruent to 1 modulo 4. By \cite[Theorem 1.7]{2018arXiv180902584L}, this implies that $[ (\mathcal{O} \otimes \Z_2)^\times : \rho_{2^\infty}(\operatorname{Gal}(\overline{F}/F)) ] = 4$. Combined with $[(\mathcal{O} \otimes \hat{\Z})^\times : H]=4$, this shows that $H$ is the product $\rho_{2^\infty}(\Gal(\overline{F}/F)) \times \prod_{\ell \geq 3} (\mathcal{O} \otimes \Z_\ell)^\times$. By \cite[Theorem 1.7]{2018arXiv180902584L} again, the factor $\rho_{2^\infty}(\Gal(\overline{F}/F))$ contains a scalar $\lambda_2$ with $v_2(\lambda_2-1)=2$. Since $H$ is the above direct product, we obtain that $H$ (hence $G_\infty$) contains $(\lambda_2, -1, -1, \ldots)$. Applying Sah's lemma to this element then shows that the 2-part of the exponent of $H^1(G_\infty, T)$ divides 4.
\end{proof}

To conclude this section we discuss the case of \textit{Serre curves}, namely those elliptic curves over $\mathbb{Q}$ for which $[\GL_2(\hat{\Z}) : G_\infty]$ is minimal (hence equal to $2$, see \cite{Serre}). It is known that, when ordered by height, 100\% of elliptic curves over $\Q$ are Serre curves \cite{MR2563740}, so our next theorem describes the `generic' situation. The proof combines many of the same ingredients that already appear in Theorems \ref{theorem:boundCohomologyOverQ} and \ref{theorem:ExpCohomologySmallPrimes}.

\begin{theorem}
Suppose $E/\Q$ is a Serre curve. For every Galois submodule $T$ of $E_{\tors}$ we have
\[
H^1(G_\infty, T) = \begin{cases}
\Z/2\Z, \text{ if } T[2] \neq \{0\} \\
\{0\}, \text{ if } T[2] = \{0\}.
\end{cases}
\]
\end{theorem}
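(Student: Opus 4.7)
The plan is to reduce everything to the computation of $H^1(G_\infty, E[2])$ and then show that group equals $\Z/2\Z$. Decompose $T = \bigoplus_\ell T[\ell^\infty]$ so that $H^1(G_\infty, T) = \bigoplus_\ell H^1(G_\infty, T[\ell^\infty])$. For a Serre curve, the index-$2$ character $\varepsilon\colon\GL_2(\hat\Z)\to\{\pm 1\}$ cutting out $G_\infty$ has the form $\varepsilon_2\cdot(\chi_{\Delta_E^*}\circ\det)$; both factors vanish on scalar matrices, since scalars reduce to $\Id$ modulo $2$ and their determinants are squares. Hence $\hat\Z^\times\cdot\Id\subseteq G_\infty$, and in particular $G_{\ell^\infty}=\GL_2(\Z_\ell)$ for every prime $\ell$. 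For odd $\ell$, take via CRT a scalar $\lambda\Id\in G_\infty$ with $v_\ell(\lambda_\ell-1)=0$: Sah's lemma kills $H^1(G_\infty, T[\ell^\infty])$.

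For $\ell=2$ the case $T[2]=0$ forces $T[2^\infty]=0$ (any nonzero $G_\infty$-submodule of $E[2^\infty]$ contains $E[2]$, since $\GL_2(\Z_2)$ acts irreducibly on each successive quotient $E[2^n]/E[2^{n-1}]$). Otherwise $E[2]\subseteq T[2^\infty]\subseteq E[2^\infty]$. Sah applied to $-\Id\in G_\infty$ shows $H^1(G_\infty, E[2^\infty])$ is $2$-torsion, so the short exact sequence $0\to E[2]\to E[2^\infty]\xrightarrow{[2]} E[2^\infty]\to 0$, combined with $E[2^\infty]^{G_\infty}=0$, yields $H^1(G_\infty, E[2])=H^1(G_\infty, E[2^\infty])[2]=H^1(G_\infty, E[2^\infty])$. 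The same reasoning applied to the short exact sequences $0\to E[2]\to T[2^\infty]\to T[2^\infty]/E[2]\to 0$ and $0\to T[2^\infty]\to E[2^\infty]\to E[2^\infty]/T[2^\infty]\to 0$ (both quotients again having trivial $G_\infty$-invariants) sandwiches $H^1(G_\infty, T[2^\infty])$ between two copies of $H^1(G_\infty, E[2])$, so everything collapses to the claim $H^1(G_\infty, E[2])\cong\Z/2\Z$.

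Inflation-restriction with respect to $J=\ker(G_\infty\to G_{2^\infty})$ gives an exact sequence $0\to H^1(\GL_2(\Z_2), E[2])\to H^1(G_\infty, E[2])\to\Hom(J, E[2])^{\GL_2(\Z_2)}$. The restriction term vanishes: let $\bar g\in \GL_2(\F_2)$ be a $3$-cycle (so $E[2]^{\bar g}=0$); by adjusting $\det g\in\Z_2^\times$ one can lift $\bar g$ to some $g\in\GL_2(\Z_2)$ with $(g,\Id,\Id,\dots)\in G_\infty$. Conjugation by this lift is trivial on $J$, which is supported on odd-prime components, so equivariance forces any invariant $\varphi$ to satisfy $\varphi(J)\subseteq E[2]^{\bar g}=0$. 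Hence $H^1(G_\infty, E[2])\cong H^1(\GL_2(\Z_2), E[2])$.

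The main obstacle is computing this last group. A further inflation-restriction against $K_2^{(2)}=\ker(\GL_2(\Z_2)\to S_3)$ — together with the vanishing $H^i(S_3, E[2])=0$ for $i=1,2$ (routine from the $S_3$-resolution $0\to E[2]\to\F_2[S_3/S_2]\to\F_2\to 0$ combined with Shapiro's lemma) — identifies $H^1(\GL_2(\Z_2), E[2])$ with $\Hom(K_2^{(2)}, E[2])^{S_3}$. A nonzero element arises from the composition $K_2^{(2)}\twoheadrightarrow K_2^{(2)}/K_4^{(2)}\cong\Mat_{2\times 2}(\F_2)\to E[2]$, where the last arrow is the unique nontrivial $S_3$-equivariant map. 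For uniqueness (up to scalar), one analyzes the Frattini quotient of the pro-$2$ group $K_2^{(2)}$: elementary commutator and square calculations show that $\Phi(K_2^{(2)})\supseteq K_8^{(2)}$ and that its image in $K_4^{(2)}/K_8^{(2)}\cong\Mat_{2\times 2}(\F_2)$ coincides with the subspace $\mathfrak{sl}_2(\F_2)$ of traceless matrices. Thus $K_2^{(2)}/\Phi$ is an extension of $\Mat_{2\times 2}(\F_2)$ by the trivial $S_3$-module $\F_2$, and a homological computation using $E[2]^{S_3}=0$ gives $\Hom_{\F_2[S_3]}(K_2^{(2)}/\Phi, E[2])=\Hom_{\F_2[S_3]}(\Mat_{2\times 2}(\F_2), E[2])=\F_2$.
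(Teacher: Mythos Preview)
Your proof is correct and takes a genuinely different route from the paper's in several respects.

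\textbf{Reduction to $H^1(G_\infty,E[2])$.} The paper exploits $\SL_2(\hat\Z)\subseteq G_\infty$ and manipulates an arbitrary cocycle directly: from $\xi((-\Id)g)=\xi(g(-\Id))$ together with the specific element $h=(h_2,\Id,\Id,\dots)\in\SL_2(\hat\Z)$ with $h_2=\left(\begin{smallmatrix}0&-1\\1&-1\end{smallmatrix}\right)$, it shows $\xi(-\Id)\in 2T$ and hence that $\xi$ is cohomologous to a $T[2]$-valued cocycle. You instead split $T$ by primes, use Sah with scalars to kill the odd part, and then run long exact sequences in cohomology to sandwich $H^1(G_\infty,T[2^\infty])$ between two copies of $H^1(G_\infty,E[2])$. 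Both are clean; yours is the more standard homological reduction, theirs is a one-step trick that treats all primes simultaneously.

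\textbf{Vanishing of the restriction term.} Both arguments use an element of $G_\infty$ supported at $2$ and acting on $E[2]$ as a $3$-cycle. The paper simply takes $h\in\SL_2(\hat\Z)\subseteq G_\infty$. You instead invoke the explicit shape $\varepsilon=\varepsilon_2\cdot(\chi_{\Delta_E^*}\circ\det)$ of the Serre obstruction to manufacture such an element by adjusting the determinant. This works, but note that your sentence ``hence $\hat\Z^\times\cdot\Id\subseteq G_\infty$, and in particular $G_{\ell^\infty}=\GL_2(\Z_\ell)$'' is not a valid inference: the inclusion of scalars does not by itself force $\ell$-adic surjectivity. The conclusion is of course true for Serre curves (e.g.\ because $\SL_2(\hat\Z)\subseteq G_\infty$ and $\det$ is surjective), so this is only a minor phrasing issue.

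\textbf{Computation of $H^1(\GL_2(\Z_2),E[2])$.} Here the two proofs diverge the most. The paper observes that every cocycle factors through $\GL_2(\Z/8\Z)$ and checks directly that $H^1(\GL_2(\Z/8\Z),E[2])\cong\Z/2\Z$. Your argument is entirely structural: inflation--restriction against the principal congruence subgroup $K_2^{(2)}$, the vanishing of $H^i(S_3,E[2])$ for $i=1,2$, and the determination of the Frattini quotient of $K_2^{(2)}$ (an extension of $\Mat_{2\times2}(\F_2)$ by the trivial line $\F_2$ coming from $M_2/\mathfrak{sl}_2$), together with the $S_3$-module decomposition $\Mat_{2\times2}(\F_2)\cong E[2]\oplus P(\F_2)$. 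This yields $\Hom_{\F_2[S_3]}(K_2^{(2)}/\Phi,E[2])\cong\F_2$ without any finite enumeration. Your computation is more conceptual and avoids the (admittedly small) machine check implicit in the paper's ``easy to do directly''.
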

\begin{proof}
The description of Serre curves given in \cite[Section 5]{MR2563740} implies that $G_\infty$ contains $\SL_2(\hat{\Z})$. We will make use of two special elements of $\SL_2(\hat{\Z}) \subset G_\infty$: one is $-\Id$, while the other is $h=(h_2,\Id,\Id,\ldots)$, where $h_2=\begin{pmatrix}
0 & -1 \\
1 & -1
\end{pmatrix} \in \SL_2(\Z_2)$. Notice that $h_2-\Id$ is invertible over $\Z_2$.
Let $\xi : G_\infty \to E_{\tors}$ be any cocycle and let $g \in G_\infty$ be arbitrary. We have the equality
\[
\xi(-\Id) - \xi(g) =  \xi((-\Id) \cdot g) = \xi(g \cdot (-\Id)) = \xi(g) + g \xi(-\Id).
\]
Choosing $g=h$ gives $-2\xi(h) = (h-\Id) \cdot \xi(-\Id)$ in $T = \bigoplus_\ell T[\ell^\infty]$. Taking into account that the $2$-adic component of $h-\Id$ is invertible, while multiplication by $2$ is invertible on $T[\ell^\infty]$ for each $\ell > 2$, we obtain that $\xi(-\Id)$ is divisible by $2$ in $T$. Writing $\xi(-\Id)=-2a$ for some $a \in T$ we then have $2(\xi(g) - (g \cdot a-a))=0$, that is, the cocycle $\xi$ is cohomologous to the cocycle $g \mapsto \xi(g) - (g\cdot a- a)$ with values in $T[2]$. 

We have thus shown that the natural map $H^1(G_\infty, T[2]) \to H^1(G_\infty, T)$ is surjective. It is also injective, as one sees by taking the cohomology of the exact sequence $0 \to T[2] \to T \to 2T \to 0$ and observing that $H^0(G_\infty, T)=H^0(G_\infty, 2T)=(0)$. 
Hence $H^1(G_\infty, T) = H^1(G_\infty, T[2])$. We now describe this group. 
Let $N=\ker(G_\infty \to G_{2^\infty})$, so that $G_\infty/N \cong G_{2^\infty}=\GL_2(\Z_2)$. The inflation-restriction sequence yields
\[
0 \to H^1(G/N, T[2]) \to H^1(G_\infty,T[2]) \to H^1(N, T[2])^{G_\infty},
\]
so it suffices to show that $H^1(\GL_2(\Z_2), T[2])$ is either trivial or isomorphic to $\mathbb{Z}/2\mathbb{Z}$ according to whether $T[2]$ is trivial or not, while $H^1(N, T[2])^{G_\infty}$ vanishes. We prove the latter statement first. Since $N$ acts trivially on $T[2]$ by construction we have $H^1(N, T[2])^{G_\infty} = \operatorname{Hom}(N, T[2])^{G_\infty}$. 
The conjugation action of $h \in G_\infty$ on $N$ is trivial (the only nontrivial coordinate of $h$ is $h_2$, while elements of $N$ have trivial $2$-adic component), so a homomorphism $\varphi \in \operatorname{Hom}(N, T[2])$ is $h$-invariant if and only if for all $n \in N$ we have $\varphi(n)=(h \varphi)(n)=h \cdot \varphi(h^{-1}nh)=h \cdot \varphi(n)$. Since $h$ acts on $T[2]$ via $h_2$, which has no nonzero fixed points on $T[2]$, this implies that the only $h$-invariant homomorphism $N \to T[2]$ is the trivial one. Thus $H^1(N, T[2])^{G_\infty}$ vanishes as claimed. Finally consider $H^1(\GL_2(\Z_2), T[2])$.  
Notice that $T[2]$ is a Galois submodule of $E[2]$, so we either have $T[2]=E[2]$ or $T[2]=\{0\}$. In the latter case the cohomology group certainly vanishes, so we can assume $T[2]=E[2]$.
As in the proof of Theorem \ref{theorem:ExpCohomologySmallPrimes}, every cocycle $\GL_2(\Z_2) \to E[2]$ factors via $\GL_2(\Z_2) / \langle g^2 : g \equiv \Id \pmod{2} \rangle$, hence in particular via $\GL_2(\Z/8\Z)$. Thus it suffices to check that $H^1(\GL_2(\Z/8\Z), E[2]) = \mathbb{Z}/2\mathbb{Z}$, which is easy to do directly.
\end{proof}

\section{The algebra $\mathbb{Z}_\ell[G_{\ell^\infty}]$}\label{sect:GaloisImageAlgebra}
Following the strategy suggested by \cite[Proposition 4.12]{2019arXiv190905376L}, in order to study the degrees of Kummer extensions in the next section we now study the algebra $A=\mathbb{Z}_\ell[G_{\ell^\infty}]$, by which we mean the \textit{closed} subalgebra of $\Mat_{2 \times 2}(\Z_\ell)$ generated by $G_{\ell^\infty} \subseteq \Mat_{2 \times 2}(\Z_\ell)$.
The hardest case is when the action of $G_{\ell}$ on $E[\ell]$ is reducible, and to handle this situation we rely on the following general estimate for $A$.

\begin{proposition} \label{proposition:algebraReducibleGeneral}
Let $E$ be an elliptic curve over a number field $K$ having at least one real place. Let $\ell > 2$ be a prime number. Suppose that $G_\ell$ acts reducibly on $E[\ell]$ and let $\ell^m$ be the maximal degree of an $\ell$-power cyclic isogeny $E \to E'$ defined over $K$. The algebra $A=\mathbb{Z}_\ell[G_{\ell^\infty}]$ contains
$\ell^{m}\Mat_{2\times 2}(\Z_\ell)$.
\end{proposition}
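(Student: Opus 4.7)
The plan is to exploit complex conjugation $c \in G_{\ell^\infty}$---which exists because $K$ has a real place---both to construct rank-$1$ idempotents in $A$ and to pin down the $G$-invariant lines. To begin, $c^2 = \Id$ and $\det c = -1$, so by Cayley--Hamilton $\tr(c) = 0$ and the minimal polynomial $(X-1)(X+1)$ has coprime factors in $\Z_\ell$ (here we use $\ell>2$). Thus $c$ is diagonalisable over $\Z_\ell$, and I would fix a $\Z_\ell$-basis $(e_1,e_2)$ of $T_\ell E$ with $c=\mathrm{diag}(1,-1)$. In this basis the idempotents $p_+ := (\Id+c)/2 = E_{11}$ and $p_- := (\Id-c)/2 = E_{22}$ belong to $A$ (again using that $2$ is a unit in $\Z_\ell$).

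Because $A$ is a $\Z_\ell$-subalgebra containing $p_+, p_-$ and all of $G_{\ell^\infty}$, for every $g \in G_{\ell^\infty}$ and all $i,j \in \{1,2\}$ it contains $p_i g p_j = g_{ij} E_{ij}$. Letting $I_{ij} \subseteq \Z_\ell$ denote the closed $\Z_\ell$-submodule generated by $\{g_{ij} : g \in G_{\ell^\infty}\}$, this yields
\[
A \supseteq I_{11} E_{11} + I_{12} E_{12} + I_{21} E_{21} + I_{22} E_{22},
\]
so it suffices to show $I_{ij} \supseteq \ell^m \Z_\ell$ for all $i,j$. The diagonal case is immediate, since $c_{11}=1$ and $c_{22}=-1$ are units, forcing $I_{11}=I_{22}=\Z_\ell$.

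For the off-diagonal entries I would use the maximal-isogeny hypothesis. Any $G_\ell$-invariant line in $E[\ell]$ must be preserved by the reduction of $c$, which is $\mathrm{diag}(1,-1)$; since its two eigenvalues are distinct (again because $\ell>2$), the only such lines are the coordinate axes $\langle e_1 \rangle$ and $\langle e_2 \rangle$. Reducibility of $G_\ell$ forces at least one of them to be $G$-invariant, say $\langle e_1 \rangle$; then every $g_{21}$ is divisible by $\ell$, so $I_{21}=\ell^{j_1}\Z_\ell$ with $j_1 := \min_{g \in G_{\ell^\infty}} v_\ell(g_{21}) \geq 1$, and $\langle e_1 \rangle \cap E[\ell^{j_1}]$ is a $G$-stable cyclic subgroup of order $\ell^{j_1}$, forcing $j_1 \leq m$. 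The argument for $I_{12}$ is symmetric: either $\langle e_2 \rangle$ is also $G$-invariant, in which case the same computation gives $I_{12}=\ell^{j_2}\Z_\ell$ with $j_2 \leq m$, or $\langle e_2 \rangle$ is not $G$-invariant and some $g_{12}$ is a unit, so $I_{12}=\Z_\ell$. In either case $I_{12} \supseteq \ell^m \Z_\ell$, and the proof is complete.

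The step I expect to require the most care is the identification of the $G$-invariant lines through the eigenspaces of $c$; the maximal-isogeny hypothesis then enters in precisely one place, namely the bound $j_i \leq m$. The uses of $\ell > 2$ (both to diagonalise $c$ and to invert $2$) and of the real place of $K$ (to produce $c$) are both essential, and dropping either would break the strategy.
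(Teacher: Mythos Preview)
Your proof is correct and follows essentially the same approach as the paper's: diagonalise complex conjugation to obtain the idempotents $E_{11}=(\Id+c)/2$ and $E_{22}=(\Id-c)/2$ in $A$, then use the non-existence of a cyclic $\ell^{m+1}$-isogeny to bound the valuation of the off-diagonal entries. The paper argues slightly more directly---it simply observes that if every $g_{21}$ (resp.~$g_{12}$) were divisible by $\ell^{m+1}$ then $\langle e_1\rangle$ (resp.~$\langle e_2\rangle$) modulo $\ell^{m+1}$ would be a Galois-stable cyclic subgroup---so your case distinction on which coordinate axis is $G_\ell$-invariant is not strictly needed, but it is correct and makes the role of the reducibility hypothesis more transparent.
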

\begin{proof}
We claim that there exists a basis of $T_\ell E$ with respect to which $G_{\ell^\infty}$ contains $\matr 100{-1}$. To see this, let $\tau \in \Gal(\overline{K}/K)$ be a complex conjugation, corresponding to a real embedding $K \hookrightarrow \mathbb{R}$ (one exists by assumption), and let $h = \rho_{\ell^\infty}(\tau)$. Then we have $h^2=\Id$ and $\det h = \chi_{\ell^\infty}(\tau)=-1$, which implies that the eigenvalues of $h$ are $\pm 1$. It follows that $h$ can be diagonalised over $\mathbb{Q}_\ell$, and also over $\mathbb{Z}_\ell$ since its eigenvalues are distinct modulo $\ell \neq 2$. As the conclusion of the proposition is independent of the choice of basis, we may assume that $h = \rho_{\ell^\infty}(\tau)=\matr 100{-1} \in A$. It follows that $E_{11}:= \matr 1000 = \frac{1}{2}(1+h)$ and $E_{22} := \matr 0001 = \frac{1}{2}(1-h)$ are in $A$. 
By assumption, $E$ does not admit 
a cyclic isogeny of degree $\ell^{m+1}$ defined over $K$. In terms of the matrix representation of the Galois action, this implies in particular that $G_{\ell^{\infty}}$ contains a matrix $M_1$ whose coefficient in position $(2,1)$ is nonzero modulo $\ell^{m+1}$ (for otherwise, $\langle \begin{pmatrix}
1 \\ 0
\end{pmatrix} \rangle \subset (\mathbb{Z}/\ell^{m+1} \mathbb{Z})^2 \cong E[\ell^{m+1}]$ would be a Galois-stable cyclic subgroup of order $\ell^{m+1}$), and similarly it also contains a matrix $M_2$ whose $(1,2)$-coefficient is nonzero modulo $\ell^{m_\ell+1}$. Thus we have $E_{22}M_1E_{11} = \begin{pmatrix}
0 & 0 \\
a & 0
\end{pmatrix}$ with $v_\ell(a) \leq m$ and $E_{11}M_2E_{22} = \begin{pmatrix}
0 & b \\
0 & 0
\end{pmatrix}$ with $v_\ell(b) \leq m$. The four matrices $E_{11}, E_{22}, E_{22}M_1E_{11}$ and $E_{11}M_2E_{22}$ are all in $A$, and their $\mathbb{Z}_\ell$-span contains $\ell^{m} \operatorname{Mat}_{2 \times 2}(\mathbb{Z}_\ell)$.
\end{proof}

\begin{remark}
The exponent $m$ is optimal. Indeed, if $E$ admits a $K$-rational isogeny of degree $\ell^m$, choosing a suitable basis of $T_\ell E$ we can ensure that $G_{\ell^m}$ consists of upper-triangular matrices. In particular, the (2,1)-coefficient of all matrices in $\Z_\ell[G_{\ell^\infty}]$ is divisible by $\ell^{m}$, so that the result cannot be improved.
\end{remark}

We also give a variant of the previous result for $\ell=2$. Notice that in this case we do not require that $E[2]$ be reducible.

\begin{proposition}\label{prop:Algebra2General}
Let $E$ be an elliptic curve over a number field $K$ having at least one real place. Let $2^m$ be the maximal degree of a $2$-power cyclic isogeny $E \to E'$ defined over $K$ (including $m=0$ if there are no such isogenies). The algebra $A=\mathbb{Z}_2[G_{2^\infty}]$ contains
$2^{m+1}\Mat_{2\times 2}(\Z_2)$.
\end{proposition}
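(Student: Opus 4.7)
My plan is to adapt the proof of Proposition \ref{proposition:algebraReducibleGeneral} to $\ell=2$, with the non-invertibility of $2$ in $\Z_2$ accounting for the extra factor in the bound. Let $h = \rho_{2^\infty}(\tau) \in G_{2^\infty}$ be the image of a complex conjugation attached to a real embedding of $K$, so that $h^2 = \Id$ and $\det h = -1$; consequently $1\pm h \in A$. Unlike the odd-prime case, where $(1\pm h)/2$ lie in $A$ and furnish orthogonal integer idempotents after diagonalising $h$, here only the matrices $1\pm h$ themselves are known to belong to $A$. I would split the argument according to whether $h$ is diagonalisable over $\Z_2$.

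In the diagonalisable case I would pick a $\Z_2$-basis of $T_2 E$ in which $h = \mathrm{diag}(1,-1)$, so that $2E_{11} = 1+h$ and $2E_{22} = 1-h$ belong to $A$. For any $M \in G_{2^\infty}$, the identity
\[
(2E_{11})\,M \;=\; 2 M_{11} E_{11} \;+\; 2 M_{12} E_{12},
\]
combined with $M_{11}\cdot (2E_{11}) \in A$, shows that $2 M_{12} E_{12} \in A$; symmetrically $2M_{21}E_{21}\in A$ via $(2E_{22})\,M$. The maximality argument from Proposition \ref{proposition:algebraReducibleGeneral} applies verbatim to produce $M_1, M_2 \in G_{2^\infty}$ with $v_2((M_1)_{21}), v_2((M_2)_{12})\leq m$, for otherwise $\langle e_1\rangle$ or $\langle e_2\rangle$ modulo $2^{m+1}$ would be a Galois-stable cyclic subgroup of $E[2^{m+1}]$ of order $2^{m+1}$. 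Therefore $2^{m+1}E_{12}, 2^{m+1}E_{21}\in A$, and together with $2E_{11}, 2E_{22}\in A$ we obtain $2^{m+1}\Mat_{2\times 2}(\Z_2)\subseteq A$.

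If $h$ is not diagonalisable over $\Z_2$, a short check (the primitive $\pm 1$-eigenvectors $u_\pm$ of $h$ necessarily satisfy $v_2(\det[u_+|u_-])=1$) shows that $h$ is $\Z_2$-conjugate to the swap matrix $\left(\begin{smallmatrix}0&1\\1&0\end{smallmatrix}\right)$; I would work in that basis. A direct computation reveals that for every $M\in G_{2^\infty}$ the element $(1-h)M + (M_{21}-M_{11})(1-h)\in A$ simplifies to $\epsilon_M(E_{12}-E_{22})$, where $\epsilon_M := M_{11}+M_{12}-M_{21}-M_{22}$; three entirely analogous combinations of $(1\pm h)M$ with scalar multiples of $1\pm h$ yield $\epsilon_M(E_{11}-E_{21})$, $\delta_M(E_{12}+E_{22})$ and $\delta_M(E_{11}+E_{21})$ in $A$, with $\delta_M := M_{12}+M_{22}-M_{11}-M_{21}$. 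The quantities $\epsilon_M$ and $-\delta_M$ measure respectively the failure of $M$ to preserve the lines $\langle e_1+e_2\rangle$ and $\langle e_1-e_2\rangle$, and neither line can be Galois-stable modulo $2^{m+1}$ by the maximality of $m$; hence there exist $M_1, M_2 \in G_{2^\infty}$ with $v_2(\epsilon_{M_1}), v_2(\delta_{M_2})\leq m$. Sums and differences of the resulting $A$-elements, together with $2\Id, 2h\in A$, then yield $2^{m+1}E_{ij}\in A$ for every $i,j$. The main obstacle is precisely this non-diagonalisable case: the absence of integer idempotents in $\Z_2[h]$ forces one to extract $A$-elements via the combinations $(1\pm h)M + \mu(1\pm h)$ and to interpret the coefficient in terms of the (non-)preservation of the two natural $h$-invariant lines, to which the maximality hypothesis on $m$ then applies.
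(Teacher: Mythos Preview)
Your proof is correct. The diagonalisable case is handled exactly as in the paper (and as in Proposition \ref{proposition:algebraReducibleGeneral}, with the factor $2$ coming from $1\pm h$ in place of $\frac{1\pm h}{2}$). I checked your matrix identities in the non-diagonalisable case: with $h=\left(\begin{smallmatrix}0&1\\1&0\end{smallmatrix}\right)$ one indeed has
\[
(1-h)M+(M_{21}-M_{11})(1-h)=\epsilon_M(E_{12}-E_{22}),\quad (1-h)M+(M_{12}-M_{22})(1-h)=\epsilon_M(E_{11}-E_{21}),
\]
\[
(1+h)M-(M_{11}+M_{21})(1+h)=\delta_M(E_{12}+E_{22}),\quad (1+h)M-(M_{12}+M_{22})(1+h)=-\delta_M(E_{11}+E_{21}),
\]
and your interpretation of $\epsilon_M,\delta_M$ as obstructions to preserving $\langle e_1\pm e_2\rangle$ is exactly what makes the maximality of $m$ apply. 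Taking sums and differences of the resulting $2^m$-multiples gives $2^{m+1}E_{ij}\in A$ for all $i,j$. The claim that a non-diagonalisable involution with determinant $-1$ is $\GL_2(\Z_2)$-conjugate to the swap matrix is standard (equivalently, it is conjugate to $\left(\begin{smallmatrix}1&1\\0&-1\end{smallmatrix}\right)$, which is the form the paper uses).

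Your route in the non-diagonalisable case is genuinely different from the paper's. The paper puts $h$ in the form $\left(\begin{smallmatrix}1&1\\0&-1\end{smallmatrix}\right)$ and then splits further: if $E[2]$ is reducible it passes to the $2$-isogenous curve $E'$ (for which complex conjugation becomes diagonalisable), applies the diagonal case there with parameter $\max\{m-1,1\}$, conjugates back, and finishes $m=1$ by a separate direct check; if $E[2]$ is irreducible (so $m=0$) it argues that $G_2=\GL_2(\F_2)$ and concludes $A=\Mat_{2\times 2}(\Z_2)$ by Nakayama. Your argument is more uniform: it stays on $E$, avoids the isogeny bookkeeping and the reducible/irreducible split, and needs no special treatment of small $m$. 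The price is the four explicit identities above, but these are short and transparent once one realises that $(1\pm h)$ are (twice) the projectors onto the $h$-eigenlines $\langle e_1\pm e_2\rangle$.
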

\begin{proof}
Let $\tau \in \Gal(\overline{K} \mid K)$ be a complex conjugation. 
There is a basis of $T_2E$ whose first element is fixed by $\rho_{2^\infty}(\tau)$:
indeed, $\tau$ fixes all torsion points in $E(\mathbb{R})$, whose identity component is isomorphic to the circle group, hence contains a compatible family of $2^n$-torsion points.
It follows easily that $\rho_{2^\infty}(\tau)$ 
is $\GL_2(\Z_2)$-conjugate to either $\begin{pmatrix}
1 & 0 \\ 0 & -1
\end{pmatrix}$ or $\begin{pmatrix}
1 & 1 \\ 0 & -1
\end{pmatrix}$. In the first case one may reason as in Proposition \ref{proposition:algebraReducibleGeneral} to obtain that $\Z_2[G_{2^\infty}]$ contains $2E_{11}, 2E_{22}, 2E_{22}M_1$, and $2M_2E_{22}$, hence that it contains $2^{m+1}\Mat_{2 \times 2}(\Z_2)$. In the second case, suppose first that $G_2$ acts on $E[2]$ with a fixed point $P$, which is necessarily the first 2-torsion point in the given basis of $E[2] \cong T_2E/2T_2E$. 
Let $E \to E'$ be the $2$-isogeny with kernel $\langle P \rangle$. 
The $2$-adic representations attached to $E, E'$ differ by conjugation by $\matr 2001$. The $2$-adic representation attached to $E'$ maps $\tau$ to $\matr 120{-1}$, which is $\GL_2(\Z_2)$-conjugate to $\matr 100{-1}$. Moreover, the maximal degree of a $2$-power isogeny $E' \to E''$ is at most $2^{\max\{m-1,1\}}$
The previous arguments then apply to $E'$, 
hence the corresponding algebra $A'$ contains $2^{\max\{m-1,1\}+1}\Mat_{2\times 2}(\Z_2)$. Conjugating back we find that $A$ contains $2^{\max\{m,2\}+1}\Mat_{2\times 2}(\Z_2)$, and a direct check for $m=1$ finishes the proof in this case.
Finally, if $\rho_{2^\infty}(\tau)=\begin{pmatrix}
1 & 1 \\ 0 & -1
\end{pmatrix}$ and $E[2]$ is an irreducible Galois module (hence $m=0$), then $G_2=\GL_2(\F_2)$ (notice that $\#G_2$ is even since $\rho_2(\tau)$ is nontrivial). This implies $G_2=\GL_2(\F_2)$, from which it follows that the reduction modulo $2$ of $A$ is all of $\Mat_{2 \times 2}(\F_\ell)$. By Nakayama's lemma we obtain $A=\Mat_{2 \times 2}(\Z_2)$.
\end{proof}

For the irreducible case (and $\ell >2$) we rely instead on the following two observations.
The first one is well-known (see for example
\cite[Remark after Theorem 2]{MR1094193}); it is usually stated for elliptic curves over $\Q$, but -- as in the previous propositions -- it only depends on the number field having a real place.
\begin{lemma}\label{lemma:AbsIrred}
Let $K$ be a number field having at least one real place, $\ell>2$ be a prime number, $E/K$ be an elliptic curve, and $G_\ell \subseteq \GL_2(\F_\ell)$ be the image of the mod-$\ell$ Galois representation. The action of $G_\ell$ on $E[\ell]$ is either reducible or absolutely irreducible.
\end{lemma}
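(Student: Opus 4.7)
The plan is to exploit the presence of a complex conjugation in $G_\ell$ to obstruct the possibility that $E[\ell]$ is irreducible over $\F_\ell$ but becomes reducible over some extension. Concretely, fix a real place of $K$ and let $\tau \in \Gal(\overline K\mid K)$ be the corresponding complex conjugation; set $c:=\rho_\ell(\tau) \in G_\ell$. Then $c^2=\Id$ and $\det(c)=\chi_\ell(\tau)=-1$, so since $\ell>2$ the element $c$ has two distinct $\F_\ell$-eigenvalues $1$ and $-1$. In particular $c$ is diagonalisable over $\F_\ell$ and is \emph{not} a scalar matrix.

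Now suppose $V:=E[\ell]$ is irreducible as an $\F_\ell[G_\ell]$-module. By Schur's lemma $D:=\End_{\F_\ell[G_\ell]}(V)$ is a finite-dimensional division algebra over $\F_\ell$; since $\F_\ell$ is finite, $D$ is a finite field extension of $\F_\ell$, and since $\dim_{\F_\ell} V = 2$, one has $[D:\F_\ell]\in\{1,2\}$. The case $D=\F_\ell$ is exactly absolute irreducibility, so it remains to rule out $D\cong \F_{\ell^2}$. In this case the $D$-module structure on $V$ makes $V$ a one-dimensional $\F_{\ell^2}$-vector space on which $G_\ell$ acts through scalars in $\F_{\ell^2}^\times$.

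In particular, $c$ acts as multiplication by some $\alpha\in\F_{\ell^2}^\times$. Viewed as an $\F_\ell$-linear endomorphism of $V$, multiplication by $\alpha$ has characteristic polynomial equal to the minimal polynomial of $\alpha$ over $\F_\ell$: either $\alpha\in\F_\ell$, in which case $c=\alpha\Id$ is a scalar, contradicting the fact observed above that $c$ has two distinct eigenvalues; or $\alpha\notin\F_\ell$, in which case the two $\F_\ell$-eigenvalues of $c$ are the Galois conjugates $\alpha$ and $\alpha^\ell$, and we would need $\{\alpha,\alpha^\ell\}=\{1,-1\}$, forcing $\alpha\in\F_\ell$, again a contradiction.

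The only genuinely nontrivial ingredient is Schur's lemma combined with the classification of finite division algebras over $\F_\ell$, which immediately cuts the problem down to $D=\F_\ell$ or $D=\F_{\ell^2}$; after that, the argument is purely a matter of observing that a non-scalar involution with determinant $-1$ cannot act as an $\F_{\ell^2}$-scalar. There is no real obstacle to overcome, only the need to keep track of the $\F_\ell$- versus $\F_{\ell^2}$-linear structures carefully.
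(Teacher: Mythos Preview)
Your proof is correct. Note, however, that the paper does not supply its own proof of this lemma: it simply records the statement as well-known and cites \cite[Remark after Theorem 2]{MR1094193}, observing that the usual statement for $\Q$ goes through for any number field with a real place. So there is no in-paper argument to compare against.

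That said, your argument is precisely the standard one underlying the cited reference: the key input is the image of complex conjugation, which (since $\ell>2$) gives a non-scalar semisimple element of $G_\ell$ with $\F_\ell$-rational eigenvalues $\pm 1$. Your use of Schur's lemma to reduce to the dichotomy $D=\F_\ell$ or $D=\F_{\ell^2}$, followed by the observation that in the latter case every element of $G_\ell$ would act as an $\F_{\ell^2}$-scalar (hence with Galois-conjugate eigenvalues, incompatible with $\{1,-1\}\subset\F_\ell$), is clean and complete. The only point one might expand slightly is the equivalence ``$D=\F_\ell$ $\Leftrightarrow$ absolutely irreducible'', which you assert; this is the standard consequence of the Jacobson density theorem (or Burnside's theorem), and is entirely routine in this $2$-dimensional setting.
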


\begin{corollary} \label{corollary:algebraIrreducible}
Let $K$ be a number field having at least one real place, $\ell>2$ be a prime number, and $E/K$ be an elliptic curve. If $E[\ell]$ is an irreducible Galois module, then the algebra $A=\Z_\ell[G_{\ell^\infty}]$ is all of $\Mat_{2 \times 2}(\Z_\ell)$.
\end{corollary}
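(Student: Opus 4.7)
The plan is to reduce modulo $\ell$ and invoke a standard density/Nakayama argument. By Lemma \ref{lemma:AbsIrred}, the hypothesis that $E[\ell]$ is an irreducible $G_\ell$-module upgrades automatically to \emph{absolute} irreducibility, i.e.\ the $\overline{\F}_\ell$-representation $E[\ell] \otimes_{\F_\ell} \overline{\F}_\ell$ is irreducible.

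Next I would use the Burnside--Jacobson density theorem: an absolutely irreducible representation of any group (or algebra) on a finite-dimensional vector space $V$ has the property that the subalgebra of $\End(V)$ generated by the image is all of $\End(V)$. Applied to $G_\ell \subseteq \GL_2(\F_\ell)$ acting on $E[\ell] \cong \F_\ell^2$, this gives $\F_\ell[G_\ell] = \Mat_{2 \times 2}(\F_\ell)$.

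Now consider the closed $\Z_\ell$-subalgebra $A = \Z_\ell[G_{\ell^\infty}] \subseteq \Mat_{2 \times 2}(\Z_\ell)$. The image of $A$ under the reduction map $\Mat_{2 \times 2}(\Z_\ell) \twoheadrightarrow \Mat_{2 \times 2}(\F_\ell)$ is the $\F_\ell$-subalgebra generated by the image of $G_{\ell^\infty}$, namely $\F_\ell[G_\ell]$, which by the previous step is all of $\Mat_{2 \times 2}(\F_\ell)$. Equivalently, the quotient $\Z_\ell$-module $M := \Mat_{2 \times 2}(\Z_\ell)/A$ satisfies $M = \ell M$.

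Since $\Mat_{2 \times 2}(\Z_\ell)$ is a finitely generated $\Z_\ell$-module and $\Z_\ell$ is Noetherian, $M$ is also finitely generated. Nakayama's lemma (applicable because $\ell$ lies in the Jacobson radical of the local ring $\Z_\ell$) then forces $M = 0$, i.e.\ $A = \Mat_{2 \times 2}(\Z_\ell)$. This is exactly the same reduction-plus-Nakayama trick already invoked at the end of the proof of Proposition \ref{prop:Algebra2General}, so there is no real obstacle here; the only substantive input is absolute irreducibility, which is handed to us for free by Lemma \ref{lemma:AbsIrred} thanks to the hypothesis that $K$ has a real place.
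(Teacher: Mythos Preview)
Your proposal is correct and follows essentially the same line as the paper's proof: both use Lemma~\ref{lemma:AbsIrred} to upgrade to absolute irreducibility, invoke a Burnside/density theorem to deduce $\F_\ell[G_\ell]=\Mat_{2\times 2}(\F_\ell)$, and then apply Nakayama to lift this to $\Z_\ell$. The paper phrases the density step via base change to $\overline{\F_\ell}$ and a reference to \cite[Theorem 3.2.2]{etingof}, but this is just another name for the Burnside--Jacobson argument you cite.
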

\begin{proof}
Let $\overline{A} \subseteq \Mat_{2 \times 2}(\mathbb{F}_\ell)$ be the image of $A$ under reduction modulo $\ell$.
By Nakayama's lemma, it suffices to prove that $\overline{A} = \Mat_{2 \times 2}(\F_\ell)$. Notice that $\overline{A}=\F_\ell[G_{\ell}]$.
As $G_\ell$ acts irreducibly on $E[\ell] \cong \F_\ell^2$ by assumption, Lemma \ref{lemma:AbsIrred} shows that it also acts irreducibly on $E[\ell] \otimes_{\F_\ell} \overline{\F_\ell}$, hence the natural module $\overline{\F_\ell}^2$ for $\overline{A} \otimes_{\F_\ell} \overline{\F_\ell}$ is irreducible. By \cite[Theorem 3.2.2]{etingof} we obtain $\overline{A} \otimes_{\F_\ell} \overline{\F_\ell} = \Mat_{2 \times 2}(\overline{\F_\ell})$, which implies $\overline{A}= \Mat_{2 \times 2}(\F_\ell)$.
\end{proof}

We now specialise to the case $K=\Q$.
For $\ell=2$ we have the following.
\begin{proposition} \label{proposition:algebra2}
Let $E$ be an elliptic curve defined over $\Q$. The algebra $\Z_2[G_{2^\infty}]$ contains $2^4\Mat_{2\times 2}(\Z_2)$, and if $E$ has potential complex multiplication it also contains $2^3\Mat_{2\times 2}(\Z_2)$.
\end{proposition}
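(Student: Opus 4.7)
The plan is to combine Proposition \ref{prop:Algebra2General}, which furnishes the lower bound $\Z_2[G_{2^\infty}]\supseteq 2^{m+1}\Mat_{2\times 2}(\Z_2)$ with $2^m$ the maximal degree of a cyclic $2$-power isogeny $E\to E'$ defined over $\Q$, with Kenku's classification of $\Q$-rational cyclic isogenies of elliptic curves. Kenku's theorem gives $m\leq 4$ in general; for $E/\Q$ with potential CM an inspection of the thirteen possible CM types (or the classification of $2$-adic CM images in \cite{2018arXiv180902584L}) shows that in fact $m\leq 2$.

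In the potential CM case the conclusion is therefore immediate from Proposition \ref{prop:Algebra2General}: $\Z_2[G_{2^\infty}]\supseteq 2^{m+1}\Mat_{2\times 2}(\Z_2)\supseteq 2^3\Mat_{2\times 2}(\Z_2)$. For non-CM $E/\Q$ with $m\leq 3$ we similarly obtain $\Z_2[G_{2^\infty}]\supseteq 2^4\Mat_{2\times 2}(\Z_2)$ without further work. The delicate case -- and the main obstacle -- is non-CM $E/\Q$ admitting a rational cyclic $16$-isogeny, so that $m=4$; here Proposition \ref{prop:Algebra2General} yields only $\Z_2[G_{2^\infty}]\supseteq 2^5\Mat_{2\times 2}(\Z_2)$, one power of $2$ weaker than what we want.

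To handle this last case we invoke the explicit classification of $2$-adic Galois images for non-CM elliptic curves over $\Q$ due to Rouse--Zureick-Brown \cite{MR3500996}: the conjugacy classes of subgroups of $\GL_2(\Z_2)$ that arise as $G_{2^\infty}$ for curves with a rational cyclic $16$-isogeny form a short, explicit list, and for each of them we verify by a direct computation in the image that the generated $\Z_2$-subalgebra of $\Mat_{2\times 2}(\Z_2)$ contains $2^4\Mat_{2\times 2}(\Z_2)$. The structural ingredients that make this refinement possible are: the existence (forced by Kenku's obstruction to a cyclic $32$-isogeny) of a matrix in $G_{2^\infty}$ whose $(2,1)$-entry has $v_2$-valuation exactly $4$; the image of complex conjugation; and the scalars $1+16\,\Z_2\cdot\Id$ guaranteed by Theorem \ref{thm:ScalarsImageGalois}. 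A careful combination of these elements produces an element of $\Z_2[G_{2^\infty}]$ whose $(2,1)$-entry has $v_2$-valuation exactly $4$ (without the spurious extra factor of $2$ coming from multiplying by $2E_{22}$ in the generic argument), yielding the improved bound $2^4\Mat_{2\times 2}(\Z_2)$.
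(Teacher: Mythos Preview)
Your proposal is essentially correct and follows the same approach as the paper. Both arguments handle the CM case identically (bound $m\le 2$ on the maximal $2$-power cyclic isogeny degree, then apply Proposition~\ref{prop:Algebra2General}), and both ultimately appeal to the Rouse--Zureick-Brown classification \cite{MR3500996} and a direct computational check for the non-CM case. The only difference is one of scope: you first dispose of the cases $m\le 3$ via Proposition~\ref{prop:Algebra2General} and restrict the computer verification to the finitely many $2$-adic images admitting a rational cyclic $16$-isogeny, whereas the paper simply loops over \emph{all} possible $2$-adic images from the Rouse--Zureick-Brown list.

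One cautionary remark: your final paragraph, describing the ``structural ingredients'' that ``make this refinement possible'', reads as though you intend to give a conceptual, non-computational argument for the $m=4$ case. As written, that sketch is too vague to constitute a proof: you do not explain concretely how the presence of scalars in $1+16\Z_2$, the complex-conjugation element, and a matrix with $(2,1)$-entry of valuation exactly $4$ combine to produce the four generators of $2^4\Mat_{2\times 2}(\Z_2)$ without picking up the extra factor of $2$ inherent in the proof of Proposition~\ref{prop:Algebra2General} (where one only knows $2E_{11},2E_{22}\in A$ in the $\begin{pmatrix}1&1\\0&-1\end{pmatrix}$ case). If you mean this only as heuristic motivation for why the direct computation succeeds, that is fine; but if you intend it as a standalone argument, it needs to be made precise.
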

\begin{proof}
If $E$ does not have complex multiplication over $\overline\Q$ we can check
  the claim directly by a short computer calculation, looping over all
  subgroups of $\GL_2(\Z_2)$ that can arise as the image of the 2-adic
  representation (the list of such groups is known as a consequence of the
  results in \cite{MR3500996}).
  If $E$ has CM over $\overline \Q$, then every $2$-power isogeny $E \to E'$ defined over $\Q$ has degree dividing 4 (see for example \cite[Remark 5.2]{MR4171427}). It follows from Proposition \ref{prop:Algebra2General} that $A$ contains $2^3\Mat_{2\times 2}(\Z_2)$.
\end{proof}
\begin{remark}
The result is optimal. This follows from \cite{MR3500996} in the non-CM case, while in the CM case it suffices to consider an elliptic curve with CM by $\mathbb{Z}[\sqrt{-4}]$, see \cite[Theorem 1.6]{2018arXiv180902584L}.
\end{remark}

We are now ready to obtain a uniform lower bound on the algebra $A$.
\begin{theorem} \label{theorem:algebraZG}
  Let $E$ be an elliptic curve over $\mathbb{Q}$ and let $\ell$ be a prime
  number. Set
\[
    m_{\text{non-CM}, \ell} = \begin{cases}
      4, \text{ if } \ell=2 \\
      2, \text{ if } \ell=3, 5 \\
      1, \text{ if } \ell=7,11,13,17,37\\
      0, \text{ otherwise}
    \end{cases}
    \quad 
        m_{\text{CM}, \ell} = \begin{cases}
      3, \text{ if } \ell=2,3 \\
      1, \text{ if } \ell=7,11,19,43,67,163 \\
      0, \text{ otherwise}
    \end{cases}
\]
  and $m_\ell=m_{\text{CM}, \ell}$ or $m_\ell=m_{\text{non-CM}, \ell}$ according to whether or not $E_{\overline{\Q}}$ has CM.
  The algebra $A=\mathbb{Z}_\ell[G_{\ell^\infty}]$ contains
  $\ell^{m_\ell}\Mat_{2\times 2}(\Z_\ell)$.
\end{theorem}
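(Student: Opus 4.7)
The plan is to split into cases according to the prime $\ell$ and the reducibility of $E[\ell]$, leveraging the structural results proven earlier in this section. The case $\ell = 2$ is directly covered by Proposition \ref{proposition:algebra2}, whose bounds $2^4$ (non-CM) and $2^3$ (CM) match exactly the tabulated values $m_{\text{non-CM},2} = 4$ and $m_{\text{CM},2} = 3$, so from now on we fix an odd prime $\ell$.

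If the $G_\ell$-module $E[\ell]$ is irreducible, then by Corollary \ref{corollary:algebraIrreducible} (which applies since $\Q$ has a real place) the algebra $A$ coincides with $\Mat_{2\times 2}(\Z_\ell)$, and the desired inclusion holds trivially for any $m_\ell \geq 0$. The substantive case is therefore when $E[\ell]$ is reducible, equivalently when $E$ admits a $\Q$-rational cyclic isogeny of degree $\ell$. Theorem \ref{thm:Mazur} then forces $\ell \in \{3,5,7,11,13,17,37\}$ in the non-CM case and $\ell \in \{3,5,7,11,13,17,19,37,43,67,163\}$ in the CM case, and Proposition \ref{proposition:algebraReducibleGeneral} yields $A \supseteq \ell^m \Mat_{2\times 2}(\Z_\ell)$, where $\ell^m$ is the largest degree of an $\ell$-power cyclic isogeny $E \to E'$ defined over $\Q$.

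To conclude, we would verify $m \leq m_\ell$ in each of the finitely many remaining cases by invoking known results on rational points of the modular curves $X_0(\ell^n)$, essentially due to Mazur and Kenku. In the non-CM regime: $X_0(27)$ has only CM non-cuspidal rational points (a fact already used in Subsection \ref{subsect:3adicScalars}, giving $m \leq 2$ for $\ell = 3$), $X_0(125)$ has no non-cuspidal rational points at all (giving $m \leq 2$ for $\ell = 5$), and for $\ell \in \{7,11,13,17,37\}$ no non-CM elliptic curve over $\Q$ admits an $\ell^2$-isogeny (so $m \leq 1$). In the CM regime, the explicit classification of rational isogeny classes of CM elliptic curves over $\Q$ shows that the maximal $\ell$-power cyclic isogeny has degree $27$ for $\ell = 3$ (attained at $j = 0$) and degree $\ell$ for $\ell \in \{7,11,19,43,67,163\}$, while no CM curve over $\Q$ has a rational $\ell$-isogeny for $\ell \in \{5,13,17,37\}$, so this last subcase does not arise and we are back in the irreducible regime. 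The main obstacle is precisely the compilation of these sharp isogeny-degree bounds from the modular-curve literature; once these are in hand, the algebraic criterion of Proposition \ref{proposition:algebraReducibleGeneral} mechanically completes the argument.
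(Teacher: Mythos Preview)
Your proposal is correct and follows essentially the same approach as the paper: split off $\ell=2$ via Proposition \ref{proposition:algebra2}, handle the irreducible case via Corollary \ref{corollary:algebraIrreducible}, and in the reducible case invoke Proposition \ref{proposition:algebraReducibleGeneral} together with the known sharp bounds on maximal $\ell$-power isogeny degrees. The only cosmetic difference is that the paper cites Kenku's theorem \cite[Theorem 1]{MR675184} once for all the isogeny-degree bounds (together with \cite[p.~229]{MR0337974} for the $3^3$ case and \cite[\S 5]{MR4171427} for the CM restrictions), whereas you unpack these prime by prime via the individual modular curves $X_0(\ell^n)$.
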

\begin{proof}
  The case $\ell=2$ is covered by Proposition \ref{proposition:algebra2}.
If $\ell\not\in \mathcal T_0\cup\set{19,43,67,163}$ (or just $\ell\not\in \mathcal T_0$ if $E$ is not CM), by
  Theorem \ref{thm:Mazur} the curve $E$ does not admit any rational subgroup of
  order $\ell$, so $E[\ell]$ is irreducible as a $G_\ell$-module and we can
  apply Corollary \ref{corollary:algebraIrreducible}.
For the remaining cases we apply Proposition
  \ref{proposition:algebraReducibleGeneral}, reading from \cite[Theorem 1]{MR675184} the maximal degrees of cyclic isogenies of $\ell$-power degree. Notice that isogenies of degree $3^3$ are possible only for CM elliptic curves, see \cite[p.~229]{MR0337974}. Also notice that $\ell$-isogenies between rational CM elliptic curves are only possible for $\ell \in \{2,3,7,11,19,43,167\}$, as follows for example from \cite[§5]{MR4171427}.
\end{proof}


\section{Kummer degrees}\label{sect:KummerDegrees}

Let $E$ be an elliptic curve over a number field $K$ and let
$\alpha\in E(K)$ be a point of infinite order. We give a brief description of the construction of the Kummer extensions of $K$ attached to $(E,\alpha)$, and refer the reader to \cite[§2.3]{2019arXiv190905376L}, \cite[Section 3]{JonesRouse},  \cite{2018arXiv180208527B}, or \cite{lombardo_perucca_2019} for more details.

Let $(M, N)$ be either a pair of positive integers with $N \mid M$, or $(\infty,N)$ with $N$ a positive integer.
We define $K_{M, N}$ as the extension of $K_M$ generated by the coordinates of all points $\beta \in E(\overline{K})$ such that $N\beta=\alpha$. The homomorphism 
\begin{equation}\label{eq:KummerMap}
\begin{array}{cccc}
\kappa_{M,N} : & \Gal(\overline{K} \mid K_M) & \to & E[N] \\
& \sigma & \mapsto & \sigma(\beta)-\beta
\end{array}
\end{equation}
is independent of the choice of $\beta \in E(\overline{K})$ such that $N\beta=\alpha$,
and has kernel $\Gal(\overline{K} \mid K_{M,N})$, hence identifies $\Gal(K_{M,N} \mid K_M)$ with a subgroup of $E[N]$. We will also need to pass to the limit in $N$: if $\ell$ is a prime number, we denote by $K_{\infty, 	\ell^\infty}$ the extension of $K_\infty$ generated by the coordinates of the points $\beta \in E(\overline{K})$ that satisfy $\ell^n\beta = \alpha$ for some $n \geq 0$. Similarly, we write $K_{\infty, \infty}$ for the extension of $K_\infty$ generated by the coordinates of the points $\beta \in E(\overline{K})$ that satisfy $N\beta = \alpha$ for some $N \geq 1$. Passing to the limit in $N$ in Equation \eqref{eq:KummerMap} we obtain an identification of $\Gal(K_{\infty, \ell^\infty} \mid K_\infty)$ with a $\Z_\ell$-submodule $V_{\ell^\infty}$ of $T_\ell E \cong \Z_\ell^2$, and of $\Gal(K_{\infty, \infty} \mid K_\infty)$ with a $\hat{\Z}$-submodule $V_\infty$ of $TE \cong \hat{\Z}^2$. 
We remark that $V_{\ell^\infty}$ is the projection of $V_{\infty}$ to $\Z_\ell^2$,
and since $V_{\ell^\infty}$ is a pro-$\ell$ group and there are no nontrivial
continuous morphisms from a pro-$\ell$ group to a pro-$\ell'$ group for
$\ell \neq \ell'$ we have $V_\infty = \prod_\ell V_{\ell^\infty}$. Finally, we recall the following fact, which will be crucial in our applications.

\begin{lemma}[{\cite[Lemma 2.5]{2019arXiv190905376L}}]\label{lemma:HnActionOnVn} For every prime $\ell$, the $\Z_\ell$-module $V_{\ell^\infty} \subseteq \Z_\ell^2$ is also a module for the natural action of $G_{\ell^\infty} \subseteq \GL_2(\Z_\ell)$ on $\Z_\ell^2$.
\end{lemma}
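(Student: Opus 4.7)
The plan is to exhibit $V_{\ell^\infty}$ as the image of a suitable $G_{\ell^\infty}$-equivariant homomorphism, which forces it to be a $G_{\ell^\infty}$-submodule of $T_\ell E$. The key point is to write down a Kummer map at the level of $\Gal(\overline K/K_\infty)$ and then to transport the conjugation action of $\Gal(\overline K/K)$ on its normal subgroup $\Gal(\overline K/K_\infty)$ to the natural Galois action on $T_\ell E$.

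First, I would fix a compatible system $(\beta_n)_{n\geq 0}$ of points of $E(\overline K)$ with $\beta_0=\alpha$ and $\ell\beta_{n+1}=\beta_n$, and define the $\ell$-adic Kummer map
\[
\kappa : \Gal(\overline K/K_\infty) \to T_\ell E, \quad
\sigma \mapsto \bigl(\sigma(\beta_n)-\beta_n\bigr)_{n\geq 1}.
\]
A standard verification (independence of the choice of the $\beta_n$, continuity, and $\Z_\ell$-linearity of $\sigma \mapsto \kappa(\sigma)$) shows that $\kappa$ is a well-defined continuous homomorphism, and by the very definition of $K_{\infty,\ell^\infty}$ its image is exactly $V_{\ell^\infty}$.

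Next, I would prove that $\kappa$ is equivariant with respect to conjugation by $\Gal(\overline K/K)$ on the source and the natural action on $T_\ell E$ on the target. Concretely, take $g\in \Gal(\overline K/K)$ with image $\bar g\in G_{\ell^\infty}$ and $\sigma\in \Gal(\overline K/K_\infty)$. Because $\alpha\in E(K)$, the point $g^{-1}(\beta_n)$ is again an $\ell^n$-division point of $\alpha$, so $g^{-1}(\beta_n)=\beta_n+T_n$ for some $T_n\in E[\ell^n]\subseteq E(K_\infty)$. Since $\sigma$ fixes $E(K_\infty)$ we have $\sigma(T_n)=T_n$, and a direct calculation then gives
\[
g\sigma g^{-1}(\beta_n)-\beta_n = g\bigl(\sigma(\beta_n)-\beta_n\bigr),
\]
that is, $\kappa(g\sigma g^{-1})=\bar g\cdot \kappa(\sigma)$.

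Finally, given any $\bar g\in G_{\ell^\infty}$ and any $v\in V_{\ell^\infty}$, lift $\bar g$ to some $g\in\Gal(\overline K/K)$ and write $v=\kappa(\sigma)$ for some $\sigma\in\Gal(\overline K/K_\infty)$; the previous identity yields $\bar g\cdot v=\kappa(g\sigma g^{-1})\in V_{\ell^\infty}$, which is the claim. The only mildly delicate point is the equivariance computation: one has to keep track of the fact that $g^{-1}(\beta_n)$ differs from $\beta_n$ by an $\ell^n$-torsion element that lies in $K_\infty$, so that it is annihilated by $\sigma-1$. Once this is in place, $\Z_\ell$-stability of $V_{\ell^\infty}$ is immediate from the $\Z_\ell$-linearity of the Galois action on $T_\ell E$.
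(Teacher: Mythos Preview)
The paper does not give its own proof of this lemma: it simply quotes it from the authors' earlier work \cite[Lemma 2.5]{2019arXiv190905376L}. Your argument is correct and is exactly the standard one (and presumably the one in the cited reference): realise $V_{\ell^\infty}$ as the image of the $\ell$-adic Kummer cocycle $\kappa$ restricted to $\Gal(\overline K/K_\infty)$, and check that $\kappa$ intertwines conjugation by $\Gal(\overline K/K)$ with the natural action on $T_\ell E$. Your equivariance computation is the heart of the matter, and it is carried out correctly; the only point worth stating a bit more explicitly is that $g(T_n)=g(g^{-1}\beta_n-\beta_n)=\beta_n-g(\beta_n)$, which is what makes the telescoping work.
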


We are interested in studying the degrees
\begin{align}
  \left[K_{M,N}:K_M\right]
\end{align}
as the positive integers $N\mid M$ vary. As explained above, the Galois group $\Gal(K_{M,N} \mid K_M)$ is isomorphic to a subgroup of $E[N]$, which has order $N^2$, so the ratio
\begin{align}
  \label{eqn:kummerFailure}
  \frac{N^2}{\left[K_{M,N}:K_M\right]}
\end{align}
is an integer. It is well-known that \eqref{eqn:kummerFailure} is
bounded independently of the integers $M$ and $N$
(see for example \cite[Théorème 1]{Durham}, \cite[Lemme 14]{Hindry}, or \cite{MR552524}). In \cite{2019arXiv190905376L} we
have shown that, if $K=\Q$ and the image of $\alpha$ in the free abelian group $E(K)/E(K)_{\tors}$ is not divisible by any $n>1$, this ratio can be bounded independently also of $E$ and $\alpha$. We will now provide an explicit value for this bound.

\begin{remark} \label{remark:degreesOverKinfty}
  It is immediate to check that the ratio \eqref{eqn:kummerFailure} divides
$\displaystyle 
    \frac{N^2}{\left[K_{\infty,N}:K_\infty\right]}
$‚
  which in turn divides the index of $V_\infty$ in $\hat\Z^2$.
\end{remark}

\begin{lemma} \label{lemma:781}
  Let $E$ be an elliptic curve over a number field $K$ and let $\alpha\in E(K)$
  be a point whose image in the free abelian group
  $E(K)/E(K)_{\tors}$ is not divisible by any $n>1$. 
Let $e$ be a positive integer such that, for all positive integers $N$, the group $H^1(G_\infty,E[N])$ has exponent dividing $e$. 
 For every prime $\ell$ the group $V_{\ell^\infty}$ contains
  an element of $\ell$-adic valuation at most $v_\ell(e)$.
\end{lemma}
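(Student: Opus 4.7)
The plan is to argue by contrapositive: assuming every element of $V_{\ell^\infty}$ has $\ell$-adic valuation strictly greater than $v_\ell(e)$, I will derive a contradiction with the indivisibility hypothesis on $\alpha$. Set $m := v_\ell(e)$ and $N := \ell^{m+1}$. The contrapositive hypothesis reads $V_{\ell^\infty} \subseteq \ell^{m+1} T_\ell E$, which, via the identification of $V_N$ with the image of $V_{\ell^\infty}$ in $E[N] = T_\ell E / \ell^{m+1} T_\ell E$, forces $V_N = 0$. Equivalently, there exists $\beta \in E(K_\infty)$ with $N\beta = \alpha$.

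Next, I would bring in the cohomology bound through the classical Kummer class attached to $\alpha$. The short exact sequence
\[
  0 \to E[N] \to E(\Kbar) \xrightarrow{N} E(\Kbar) \to 0
\]
of $\Gal(\Kbar \mid K)$-modules produces a class $[\phi_\alpha] \in H^1(\Gal(\Kbar \mid K), E[N])$ represented by the cocycle $\phi_\alpha(\sigma) = \sigma\beta - \beta$, and this construction defines an injection $E(K)/NE(K) \into H^1(\Gal(\Kbar \mid K), E[N])$. Since $\beta$ can be chosen inside $E(K_\infty)$, the restriction of $\phi_\alpha$ to $\Gal(\Kbar \mid K_\infty)$ vanishes identically. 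Applying inflation--restriction, and using that $\Gal(\Kbar \mid K_\infty)$ acts trivially on $E[N]$ (since $K_\infty$ contains the torsion field $K(E[N])$), I would conclude that $[\phi_\alpha]$ lies in the image of inflation from $H^1(G_\infty, E[N])$. By hypothesis this last group is killed by $e$, so $e \cdot [\phi_\alpha] = 0$, and the injectivity above yields $e\alpha \in NE(K)$, say $e\alpha = N\gamma$ with $\gamma \in E(K)$.

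To close the argument, I would write $e = \ell^m u$ with $\gcd(u,\ell) = 1$: then $e\alpha = N\gamma$ simplifies to $u\alpha = \ell\gamma$ in $E(K)$. Picking a B\'ezout relation $au + b\ell = 1$ and reducing modulo $E(K)_{\tors}$ turns this into $\bar{\alpha} = \ell(a \bar{\gamma} + b \bar{\alpha})$, contradicting the hypothesis that the image of $\alpha$ in $E(K)/E(K)_{\tors}$ is not divisible by any $n>1$. The step that requires the most care is the inflation--restriction argument in the middle paragraph, to be sure that the kernel of the restriction to $\Gal(\Kbar \mid K_\infty)$ really is the full image of $H^1(G_\infty, E[N])$; this, however, is routine once one notes that $E[N]$ is a trivial $\Gal(\Kbar \mid K_\infty)$-module, so that all the other ingredients (Kummer sequence and B\'ezout) combine without further obstruction.
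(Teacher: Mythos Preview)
Your argument is correct and gives a clean, self-contained proof. One small imprecision: the sentence ``$e\alpha = N\gamma$ simplifies to $u\alpha = \ell\gamma$ in $E(K)$'' is not literally true in $E(K)$, since from $\ell^m u\alpha = \ell^{m+1}\gamma$ one only obtains that $u\alpha - \ell\gamma$ is $\ell^m$-torsion. But this does not affect the conclusion, as you immediately pass to $E(K)/E(K)_{\tors}$, where the cancellation is valid and the B\'ezout step goes through.

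As for the comparison: the paper does not actually prove this lemma here; it simply invokes \cite[Lemma 7.8(1)]{2019arXiv190905376L} together with Lemma~\ref{lemma:CohomologyBoundReductionToGInfty} to pass from $G_\infty$ to finite-level Galois groups. Your argument supplies precisely the Kummer-theoretic content that is hidden behind that citation: the Kummer class of $\alpha$ lands in the inflation of $H^1(G_\infty, E[N])$ once an $N$-division point lives over $K_\infty$, and the exponent bound on that $H^1$ then forces $\ell$-divisibility of $\bar\alpha$. This is almost certainly the same mechanism as in the cited reference, so your proof should be regarded as essentially unpacking the paper's citation rather than taking a genuinely different route.
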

\begin{proof}
  This follows immediately from \cite[Lemma 7.8(1)]{2019arXiv190905376L} since for any positive integers $M,N$ with $N\mid M$ the exponent
  of $H^1(G_M,E[N])$ divides $e$ (Lemma \ref{lemma:CohomologyBoundReductionToGInfty}).
\end{proof}

\begin{lemma} \label{lemma:78234-bis}
  Let $E$ be an elliptic curve over a number field $K$ and let $\alpha\in E(K)$.
Suppose that $V_{\ell^\infty}$ contains an element $v$
  of $\ell$-adic valuation at most $d$ and that $\Z_\ell[G_{\ell^\infty}]
  \supseteq\ell^n\Mat_{2\times 2}(\Z_\ell)$ for some non-negative integer $n$.
  Then $[T_\ell E :V_{\ell^\infty}]$ divides $\ell^{n+2d}$.
\end{lemma}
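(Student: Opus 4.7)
The approach is to exploit the fact, established in Lemma \ref{lemma:HnActionOnVn}, that $V_{\ell^\infty}$ is stable not just under the scalar action of $\Z_\ell$ but under the full algebra $A := \Z_\ell[G_{\ell^\infty}]$. Since $v \in V_{\ell^\infty}$ this gives a containment $V_{\ell^\infty} \supseteq A \cdot v \supseteq \ell^n \Mat_{2 \times 2}(\Z_\ell) \cdot v$, which I would use in tandem with the obvious inclusion $\Z_\ell v \subseteq V_{\ell^\infty}$.

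Let me set $e := v_\ell(v) \leq d$ and write $v = \ell^e w$ with $w \in \Z_\ell^2 \setminus \ell\Z_\ell^2$, so that some coordinate of $w$ is a unit. A direct check, using the matrices whose only nonzero column is that of a prescribed target, shows $\Mat_{2\times 2}(\Z_\ell) \cdot w = \Z_\ell^2$, and hence
\[
\ell^n \Mat_{2\times 2}(\Z_\ell) \cdot v = \ell^{n+e}\Z_\ell^2.
\]
Combining this with $\Z_\ell v \subseteq V_{\ell^\infty}$ I then obtain
\[
V_{\ell^\infty} \supseteq \Z_\ell v + \ell^{n+e}\Z_\ell^2.
\]

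The key observation — which accounts for the bound $\ell^{n+2d}$ rather than the cruder $\ell^{2(n+d)}$ one would get by discarding $v$ — is that $v$ itself contributes an additional cyclic summand beyond $\ell^{n+e}\Z_\ell^2$. Passing to the quotient modulo $\ell^{n+e}$, the image $\bar v$ of $v$ in $(\Z/\ell^{n+e}\Z)^2$ has additive order exactly $\ell^{(n+e)-v_\ell(v)}=\ell^{n}$, so
\[
\bigl[\Z_\ell^2 : \Z_\ell v + \ell^{n+e}\Z_\ell^2\bigr] \;=\; \frac{\ell^{2(n+e)}}{\ell^{n}} \;=\; \ell^{n+2e}.
\]
It follows that $[T_\ell E : V_{\ell^\infty}]$ divides $\ell^{n+2e}$, which in turn divides $\ell^{n+2d}$ since $e \leq d$. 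No step is really an obstacle; the only point requiring mild care is computing the additive order of $\bar v$ correctly, so that the saving of $\ell^n$ over the naive estimate is captured.
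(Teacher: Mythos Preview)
Your argument is correct and follows essentially the same route as the paper: both use Lemma~\ref{lemma:HnActionOnVn} to obtain $\ell^{n+e}\Z_\ell^2 \subseteq V_{\ell^\infty}$ from the hypothesis on $\Z_\ell[G_{\ell^\infty}]$, and then combine this with the element $v$ itself to reach the bound $\ell^{n+2e}\mid\ell^{n+2d}$. The paper's proof is marginally more concrete---it normalises $v$ to $\ell^d\begin{pmatrix}1\\0\end{pmatrix}$ via a choice of basis and reads off the index directly---whereas you compute the same index via the order of $\bar v$ in the quotient, but there is no substantive difference.
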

\begin{proof} We may assume without loss of generality that $v$ has exact valuation $d$.
Up to a choice of isomorphism $T_\ell E \cong \mathbb{Z}_\ell^2$ we may then further assume $v=\ell^d \begin{pmatrix}
1 \\ 0
\end{pmatrix}$. The $\Z_\ell[G_{\ell^\infty}]$-module $V_{\ell^\infty}$ contains $\ell^n \Mat_{2 \times 2}(\Z_\ell)\cdot v$, hence in particular contains $\ell^{n+d} \begin{pmatrix}
0 \\ 1
\end{pmatrix}$, and the claim follows immediately.
\end{proof}

\begin{theorem} \label{theorem:mainTheoremKummer}
  Let $E$ be an elliptic curve defined over $\Q$ and let
  \[
  B_{\text{non-CM}} := (2^{24}\times 3^{16}\times 5^6 \times7^6\times 11^4) \times (2^4 \times 3^2 \times 5^2 \times 7 \times 11 \times 13 \times 17 \times 37)
  \]
  \[
  B_{\text{CM}} := (2^4  \times 3^2) \times (2^3 \times 3^3 \times 7 \times 11 \times 19 \times 43 \times 67 \times 163).
  \]
Set $B=B_{\text{CM}}$ or $B=B_{\text{non-CM}}$ according to whether or not $E_{\overline{\Q}}$ has complex multiplication.
  For all positive integers
  $M$ and $N$ with $N\mid M$ the ratio \eqref{eqn:kummerFailure} divides $B$.
\end{theorem}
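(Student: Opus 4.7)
The plan is to combine three ingredients already established in the paper: the interpretation of the Kummer failure in terms of the Galois modules $V_{\ell^\infty}$, the cohomological bounds from Section \ref{sect:CohomologyBound}, and the lower bounds on the algebras $\Z_\ell[G_{\ell^\infty}]$ from Section \ref{sect:GaloisImageAlgebra}. Since all the hard work has been done in those earlier sections, this proof is essentially a bookkeeping exercise; the only real thing to do is to verify that the numerical constants multiply out correctly to $B_{\text{CM}}$ and $B_{\text{non-CM}}$.

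First, I would use Remark \ref{remark:degreesOverKinfty} to reduce to showing that $[\hat\Z^2:V_\infty]$ divides $B$. Since $V_\infty=\prod_\ell V_{\ell^\infty}$, this index factors as $\prod_\ell [T_\ell E:V_{\ell^\infty}]$, so it suffices to bound each local index $[T_\ell E:V_{\ell^\infty}]$ by an explicit power of $\ell$ and to observe that this power is zero for all but finitely many primes.

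Second, to bound $[T_\ell E:V_{\ell^\infty}]$ I would apply Lemma \ref{lemma:78234-bis} with $n=m_\ell$ (the exponent provided by Theorem \ref{theorem:algebraZG}) and $d$ a bound on the minimal $\ell$-adic valuation of an element of $V_{\ell^\infty}$. To produce such a $d$, I would invoke Lemma \ref{lemma:781}: it requires a uniform bound $e$ on the exponent of $H^1(G_\infty, E[N])$, which is provided by Theorem \ref{theorem:boundCohomologyOverQ} in the non-CM case and by Proposition \ref{prop:CohomologyBoundCMQ} in the CM case. Taking $d=v_\ell(e)$ for this $e$, Lemmas \ref{lemma:781} and \ref{lemma:78234-bis} together give
\begin{equation*}
[T_\ell E:V_{\ell^\infty}] \bigm| \ell^{m_\ell+2v_\ell(e)}.
\end{equation*}

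Finally, taking the product over all primes $\ell$ yields
\begin{equation*}
[\hat\Z^2:V_\infty] \bigm| e^2 \cdot \prod_\ell \ell^{m_\ell}.
\end{equation*}
In the non-CM case, Theorem \ref{theorem:boundCohomologyOverQ} gives $e=2^{12}\times 3^8\times 5^3\times 7^3\times 11^2$, so $e^2=2^{24}\times 3^{16}\times 5^6\times 7^6\times 11^4$, while Theorem \ref{theorem:algebraZG} gives $\prod_\ell \ell^{m_\ell}=2^4\times 3^2\times 5^2\times 7\times 11\times 13\times 17\times 37$; the product is exactly $B_{\text{non-CM}}$. In the CM case, Proposition \ref{prop:CohomologyBoundCMQ} gives $e=2^2\times 3$, hence $e^2=2^4\times 3^2$, while the CM column of Theorem \ref{theorem:algebraZG} gives $\prod_\ell \ell^{m_\ell}=2^3\times 3^3\times 7\times 11\times 19\times 43\times 67\times 163$; the product is exactly $B_{\text{CM}}$. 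This completes the proof.

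There is no real obstacle here: every step is a direct invocation of an already-proven result, and the only thing to be careful about is matching the exponents of the cohomology bound (which enters quadratically) and of the algebra bound (which enters linearly) with the factorisation of $B$ given in the statement. The implicit assumption that $\alpha\in E(\Q)$ has image not divisible by any $n>1$ in $E(\Q)/E(\Q)_{\tors}$, needed to apply Lemma \ref{lemma:781}, is the standing hypothesis announced in item (4) of the introduction.
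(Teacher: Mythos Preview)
Your proof is correct and follows essentially the same route as the paper's: reduce via Remark \ref{remark:degreesOverKinfty} to bounding $\prod_\ell[\Z_\ell^2:V_{\ell^\infty}]$, then combine Lemmas \ref{lemma:781} and \ref{lemma:78234-bis} with the cohomological bound $e$ (Theorem \ref{theorem:boundCohomologyOverQ} or Proposition \ref{prop:CohomologyBoundCMQ}) and the algebra exponents $m_\ell$ (Theorem \ref{theorem:algebraZG}) to obtain $[\Z_\ell^2:V_{\ell^\infty}]\mid \ell^{m_\ell+2v_\ell(e)}$. Your explicit verification of the numerics and your remark that the indivisibility hypothesis on $\alpha$ is implicitly in force are both welcome additions.
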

\begin{proof}
  Let $e$ be a positive integer such that $[e]$ kills $H^1(G_\infty, E[N])$ for all positive integers $N$.
For every
  prime $\ell$ let $m_\ell$ be a non-negative integer such that $\Z_\ell[G_{\ell^\infty}]$ contains $\ell^{m_\ell}\Mat_{2 \times 2}(\Z_\ell)$.
  As explained above, the ratio \eqref{eqn:kummerFailure} divides  
  \begin{align*}
    [\hat\Z^2:V_{\infty}]=\prod_\ell[\Z_\ell^2:V_{\ell^\infty}] \,,
  \end{align*}
  and by Lemmas \ref{lemma:781} and \ref{lemma:78234-bis} we have that
  \begin{align*}
    [\Z_\ell^2:V_{\ell^\infty}] \qquad \text{divides} \qquad
    \ell^{m_\ell+2v_\ell(e)}.
  \end{align*}
The conclusion then follows by taking $e$ as in Theorem \ref{theorem:boundCohomologyOverQ} (for the non-CM case) or as in Proposition \ref{prop:CohomologyBoundCMQ} (for the CM case), and $m_\ell$ as in Theorem \ref{theorem:algebraZG}.
\end{proof}

\begin{remark}
	Taking into account Remark \ref{rem:RouseExponent3}, one can take
	$v_3(e)=6$ instead of $8$ in Theorem \ref{theorem:boundCohomologyOverQ},
	so that the exponent of $3$ in $B_{\text{non-CM}}$ can be improved from
	$18$ to $14$.
\end{remark}


\section{Examples}\label{sect:Examples}
In this short section we give examples showing that most of our results are sharp or close to being sharp. We start with Theorems \ref{theorem:ExpCohomologySmallPrimes} and \ref{theorem:boundCohomologyOverQ}.
For every positive integer $N$ we have an exact sequence of $\Gal(\overline{\Q} \mid \Q)$-modules
\[
0 \to E[N] \to E_{\tors} \xrightarrow{[N]} E_{\tors} \to 0,
\]
and taking Galois cohomology we get
\[
0 \to \frac{E(\Q)_{\tors}}{NE(\Q)_{\tors}} \to H^1(G_\infty, E[N]) \to H^1(G_\infty, E_{\tors})[N] \to 0.
\]
As it is well-known that there exist elliptic curves over $\Q$ with torsion points of order $2^3, 3^2, 5, 7$, taking $N$ equal to each of these numbers in turn shows that the constant of Theorem \ref{theorem:boundCohomologyOverQ} has to be divisible at least by $2^3 \cdot 3^2 \cdot 5 \cdot 7$. Moreover, by \cite[Theorem 1]{LawsonWutrich} we know that there exists an elliptic curve $E/\Q$ with $H^1( G_{11}, E[11]) \neq 0$. Thus in particular all the primes appearing in the constant of Theorem \ref{theorem:boundCohomologyOverQ} are necessary. A simple variant of this argument, working with $E[\ell^\infty]$ instead of $E_{\operatorname{tors}}$, also shows that Theorem \ref{theorem:ExpCohomologySmallPrimes} is optimal at least for $\ell \neq 3$. As already remarked in the introduction we do not seek to obtain the best possible value for $\ell=3$, but in any case our estimate is not far from sharp: the previous argument shows that the optimal value of $n_3$ is at least 2, while Theorem \ref{theorem:ExpCohomologySmallPrimes} shows that 3 suffices.

Consider now the CM case and Proposition \ref{prop:CohomologyBoundCMQ}. The elliptic curve with LMFDB label 27.a2 \cite[\href{https://www.lmfdb.org/EllipticCurve/Q/27/a/2}{27.a2}]{lmfdb} admits a rational 3-torsion point and no other 3-isogenies defined over $\Q$, hence it satisfies the hypotheses of \cite[Theorem 1]{LawsonWutrich}, which proves that for this curve $H^1(G_3, E[3]) \neq 0$. Thus the factor 3 in Proposition \ref{prop:CohomologyBoundCMQ} is necessary. As for the power of 2, the curve with LMFDB label 32.a2 \cite[\href{https://www.lmfdb.org/EllipticCurve/Q/32/a/2}{32.a2}]{lmfdb} has potential CM and a rational 4-torsion point, which as above shows that $H^1(G_{2^\infty}, E[4])$ has exponent 4. Thus Proposition \ref{prop:CohomologyBoundCMQ} is sharp.

Finally we turn to the primes that can appear in the ratio of Equation \eqref{eqn:kummerFailure}.
In order to find examples where a given prime $\ell$ divides the degree \eqref{eqn:kummerFailure} we proceed as follows. Let $E/\Q$ be a rational elliptic curve and let $P \in E(\Q)$ be a point not divisible by any $n>1$ in $E(\Q)/E(\Q)_{\tors}$. For a fixed prime $\ell>2$, we write the multiplication by $\ell$ map as
\[
[\ell](x,y) = \left( \frac{\phi_\ell(x)}{\psi_\ell(x)^2}, \frac{\omega_\ell(x,y)}{\psi_\ell(x)^3} \right)
\]
as in \cite[Exercise 3.7]{SilvermanEC} and consider the polynomial $g(x)=\phi_\ell(x)- x(P) \psi_\ell(x)^2 \in \mathbb{Q}[x]$. Suppose that this polynomial has an irreducible factor $g_1(x) \in \Q[x]$ of degree strictly less than $\frac{\ell^2}{2}$ (equivalently, for $\ell>2$, that $g(x)$ is reducible), and let $L$ be the field generated over $\Q$ by a root $x_1$ of $g_1(x)$. Over an at most quadratic extension $L'$ of $L$, the elliptic curve $E$ admits a point $Q$ with $x$-coordinate equal to $x_1$. It follows that $[\ell]Q = \left( \frac{\phi_\ell(x_1)}{\psi_\ell(x_1)^2} , y([\ell]Q) \right) = \left( x(P) , y([\ell]Q) \right) = \pm P$, because the only two points on $E$ with $x$-coordinate equal to $x(P)$ are $\pm P$.
In particular, at least one $\ell$-division point of $P$ (namely $\pm Q$) is defined over $L'$, which has degree strictly less than $\ell^2$ over $\Q$. Since all $\ell$-division points of $P$ are obtained from $\pm Q$ by adding a $\ell$-torsion point, the field $\Q_{\ell,\ell}$ is the compositum of $L'$ and $\Q(E[\ell])$, hence $[\Q_{\ell,\ell} : \Q(E[\ell])] \leq [L' : \Q] < \ell^2$. It follows that in this case the prime $\ell$ divides the ratio \eqref{eqn:kummerFailure} for $M=N=\ell$. 

We have considered several pairs $(E,P)$ taken from the LMFDB \cite{lmfdb}, and have computed (for well-chosen primes $\ell$) the factorisation of the polynomial $g(x)$ above. 
For each prime $\ell$ appearing as a factor of the constants of Theorem \ref{theorem:mainTheoremKummer}, we have thus been able to find examples of pairs $(E, P)$ for which $\ell$ divides the index \eqref{eqn:kummerFailure} in the case $M=N=\ell$, and this both for CM and non-CM curves (for $\ell=2$ we proceeded differently and explicitly computed the field generated by the $2$-division points of $P$; this easily yields examples). In particular, this shows that the prime factors of the constants of Theorem \ref{theorem:mainTheoremKummer} are all necessary.

We would like to point out that for most primes $\ell$ we have found several examples of the behaviour described above (for $\ell=163$ we have only been able to test two curves, and only one of them yielded an example). It is hard to make conjectures based on the limited evidence we have collected, but it seems plausible that $\ell$ divides the Kummer degree \eqref{eqn:kummerFailure} (with $M=N=\ell$) for a positive proportion of rank-1 curves $E/\Q$ whose mod-$\ell$ Galois representation lands in a Borel (when $P$ is taken to be a generator of the free part of $E(\Q)$). In Tables \ref{table:ExamplesnonCM} and \ref{table:ExamplesCM} we give one explicit example for every relevant prime, both for non-CM and CM curves, specifying the curve $E/\Q$ together with its LMFDB label and the point $P \in E(\Q)$.

\renewcommand{\arraystretch}{1.4}

\begin{table}
\begin{tabular}{c|c|c|c}
$\ell$ & $E$ & LMFDB Label & $P$ 
\\ \hline
$2$ & $y^2 + xy + y = x^3 - x^2 - 41x + 96$ & 
 \href{https://www.lmfdb.org/EllipticCurve/Q/117/a/3}{117.a3} & $(2,-6)$ \\
$3$ & $y^2 + y = x^3 + x^2 - 7x + 5$ & 
 \href{https://www.lmfdb.org/EllipticCurve/Q/91/b/2}{91.b2} & $(-1, 3)$ \\
$5$ & $y^2 = x^3 - x^2 - x - 1$ &
 \href{https://www.lmfdb.org/EllipticCurve/Q/704/c/3}{704.c3} & $(2, 1)$ \\
$7$ & $y^2 + xy = x^3 - x^2 - 389x - 2859$ &
 \href{https://www.lmfdb.org/EllipticCurve/Q/338/c/1}{338.c1} & $(26, 51)$ \\
$11$ & $y^2 + xy + y = x^3 - x^2 - 32693x - 2267130$ & 
 \href{https://www.lmfdb.org/EllipticCurve/Q/1089/c/1}{1089.c1} & $(212, 438)$ 
\\
$13$ & $y^2 + y = x^3 - 8211x - 286610$ &
 \href{https://www.lmfdb.org/EllipticCurve/Q/441/a/1}{441.a1} & $(235, 3280)$ 
\\
$17$ & $y^2 + xy + y = x^3 - x^2 - 27365x - 1735513$ &
 \href{https://www.lmfdb.org/EllipticCurve/Q/130050/gu/2}{130050.gu2} & $(\frac{4047}{4}, \frac{249623}{8})$ 
\\
$37$ & $y^2 + xy + y = x^3 + x^2 - 208083x - 36621194$ &
 \href{https://www.lmfdb.org/EllipticCurve/Q/1225/b/1}{1225.b1} & $(1190, 36857)$ 
\end{tabular}

\medskip

\caption{Primes $\ell$ dividing the relative Kummer degree \eqref{eqn:kummerFailure}, non-CM curves.\label{table:ExamplesnonCM}}
\end{table}

\begin{table}
\begin{tabular}{c|c|c|c}
$\ell$ & $E$ & LMFDB Label & $P$ 
\\ \hline
$2$ & $y^2 = x^3 - 36x$ &
 \href{https://www.lmfdb.org/EllipticCurve/Q/576/c/3}{576.c3} & $(-2,-8)$ \\
$3$ & $y^2 + y = x^3 - 34$ &
 \href{https://www.lmfdb.org/EllipticCurve/Q/225/c/1}{225.c1} & $(6, 13)$ \\
$7$ & $y^2 = x^3 - 1715x - 33614$ &
 \href{https://www.lmfdb.org/EllipticCurve/Q/784/f/2}{784.f2} & $(57, 232)$
\\
$11$ & $y^2 + y = x^3 - x^2 - 887x - 10143$ &
 \href{https://www.lmfdb.org/EllipticCurve/Q/121/b/1}{121.b1} & (81, 665) 
\\
$19$ & $y^2 + y = x^3 - 13718x - 619025$ &
 \href{https://www.lmfdb.org/EllipticCurve/Q/361/a/1}{361.a1} & $(2527, 126891)$ 
\\
$43$ & $y^2 + y = x^3 - 1590140x - 771794326$ &
 \href{https://www.lmfdb.org/EllipticCurve/Q/1849/b/1}{1849.b1} & $P_{43}$ 
\\
$67$ & $y^2 + y = x^3 - 33083930x - 73244287055$ &
 \href{https://www.lmfdb.org/EllipticCurve/Q/4489/b/1}{4489.b1} & $P_{67}$ 
\\
163 & $y^2 + y = x^3 - 57772164980x - 5344733777551611$ &
 \href{https://www.lmfdb.org/EllipticCurve/Q/26569/a/1}{26569.a1} & $P_{163}$
\end{tabular}

\medskip

\caption{Primes $\ell$ dividing the relative Kummer degree \eqref{eqn:kummerFailure}, CM curves.\label{table:ExamplesCM}}
\end{table}

The points $P_{43}$ and $P_{67}$ are given by $P_{43} = \displaystyle \left(\frac{66276734}{29929}, -\frac{419567566482}{5177717} \right)$ and 
\[
P_{67}=\displaystyle \left(\frac{49970077554856210455913}{1635061583290810756}, \frac{10956085084392718114395997318977993}{2090745506172424414999081096} \right)
\]
respectively. The point $P_{163}$ is the unique generator of $E(\Q) \cong \Z$ with positive $y$ coordinate; it has canonical height approximately equal to $373.48$, so its coordinates are too large to be displayed here, but they can be found at \cite[\href{https://www.lmfdb.org/EllipticCurve/Q/26569/a/1}{Elliptic Curve 26569.a1}]{lmfdb}.

We have also considered the divisibility of \eqref{eqn:kummerFailure} by higher powers of $\ell$. Experiments analogous to the above are computationally intensive, so we only studied the very small primes $2$ and $3$. An example where the index \eqref{eqn:kummerFailure} is divisible by $16$ was found by Rouse and Cerchia \cite{MR4198067}: letting $E : y^2 = x^3 - 343x + 2401$ and $P = (0, -49)$, there is a point $P_4 \in E(\mathbb{Q}(E[8]))$ such that $4P_4=P$. This implies that $2^4$ divides \eqref{eqn:kummerFailure} for $N=4, M=8$. We found several other examples in which \eqref{eqn:kummerFailure} is divisible by $2^4$ for suitable values of $M,N$, but no example involving higher powers of $2$. This might in part be due to the fact that -- for computational reasons -- we have only been able to extend our search to $M=8, N \mid M$.

\begin{remark}
J. Rouse recently informed us that he constructed an example where
\eqref{eqn:kummerFailure} is divisible by $2^6$ when $M$ and $N$ are
sufficiently large powers of $2$.
\end{remark}

For $\ell=3$ we consider $E: y^2 + y = x^3 - 6924x + 221760$ and $P=(2354/49, -176/343)$, which is a generator of $E(\Q)/E(\Q)_{\tors}$. Write $g(x)$ for the polynomial whose roots are the $x$-coordinates of the $9$-division points of $P$: one may check that $g(x) \in \mathbb{Q}[x]$ has an irreducible factor $g_1(x)$ of degree $9$. Further denote by $\psi_9(x)$ the $9$-th division polynomial of $E$, whose roots are the $x$-coordinates of the points in $E[9]$. We have also computed that the Galois groups of $\psi_9(x), g_1(x)$ and $\psi_9(x)g_1(x)$ over $\mathbb{Q}$ have order $462$, $27$ and $3 \cdot 462$ respectively. This proves that the Galois group of $g_1(x)$ over $\mathbb{Q}(E[9])$ has order 3, hence in particular that $g_1(x)$ becomes reducible over $\mathbb{Q}(E[9])$. A $9$-division point of $P$ is then defined over an extension of $\mathbb{Q}(E[9])$ of degree at most (and in fact exactly) 3. As before, all other 9-division points are defined over the same field, hence the relative Kummer degree \eqref{eqn:kummerFailure} is divisible by $3^3$ for $M=N=9$. We have found other examples where $3^3$ divides \eqref{eqn:kummerFailure}, but none involving a factor $3^4$; as with $\ell=2$, it is entirely possible that this is only due to the limits of our search range.

\appendix


\section{Scalars in pro-$p$ subgroups of $\GL_2(\Z_p)$}\label{sect:pAdicAppendix}

In this appendix we prove an abstract group-theoretic result, used in Section \ref{subsect:3adicScalars} to study the subgroup of scalar matrices in the image of the $3$-adic representation attached to a non-CM elliptic curve over $\mathbb{Q}$. In the statement and proof of Proposition \ref{prop:padicscalars} we will employ the notation $H_{p^n}$ for the reduction modulo $p^n$ of a closed subgroup $H$ of $\GL_2(\Z_p)$ (cf.~Section \ref{subsect:GroupTheoreticCriteria}).

\begin{proposition}\label{prop:padicscalars}
Let $p$ be an odd prime, $H$ be a closed pro-$p$ subgroup of $\GL_2(\Z_p)$, and $k$ be a positive integer. Suppose that the following hold:
\begin{enumerate}
\item $H_p$ has order $p$,
\item $H_{p^k}$ is not contained in the subgroup of upper- or lower-triangular matrices;
\item $\det(H) = 1+p\Z_p$;
\item $H$ is normalised by $C := \matr100{-1}$.
\end{enumerate}
Then $H$ contains all scalars congruent to 1 modulo $p^k$.
\end{proposition}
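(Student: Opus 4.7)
The plan is to pass to the Lie algebra. Since $p > 2$, I would let $H^{(1)} := H \cap \ker(\GL_2(\Z_p) \to \GL_2(\F_p))$ and set $\mathfrak{h} := \log(H^{(1)}) \subseteq p\,\mathfrak{gl}_2(\Z_p)$, a closed $\Z_p$-Lie subalgebra obtained via the convergent $p$-adic logarithm. The desired conclusion is equivalent to $p^k \Id \in \mathfrak{h}$. Since $C$ normalizes $H^{(1)}$, conjugation by $C$ gives an involution of $\mathfrak{h}$, and because $p$ is odd this yields a decomposition $\mathfrak{h} = \mathfrak{h}^+ \oplus \mathfrak{h}^-$, where $\mathfrak{h}^+$ and $\mathfrak{h}^-$ consist of the diagonal and anti-diagonal matrices in $\mathfrak{h}$ respectively. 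The goal is $p^k\Id \in \mathfrak{h}^+$. As a preliminary reduction, using (1) and the observation that $C$ must act on the cyclic order-$p$ group $H_p$ as inversion (since $\GL_2(\F_p)$ has no nontrivial order-$p$ diagonal matrices for $p > 2$), I would show that $H_p$ is conjugate to $\langle u_0\rangle$ with $u_0 = \matr 1101$ via a matrix in the normalizer of the diagonal torus (which preserves the $C$-normalization of $H$); after this, assume $H_p = \langle u_0 \rangle$.

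Next, I would identify explicit elements of $\mathfrak{h}^-$. For any lift $u \in H$ of $u_0$, a direct expansion gives $u^p \equiv \Id + p N \pmod{p^2}$ with $N = \matr 0100$, so $\log(u^p) \in \mathfrak{h}^-$ has $(1,2)$-entry of $p$-adic valuation exactly $1$. From assumption (2) I would extract $h \in H$ with $v_p(h_{21}) \leq k - 1$ and, after suitable manipulation (multiplying by a power of $u$ and possibly combining with $C h C^{-1}$) to place the element in $H^{(1)}$, obtain $Y \in \mathfrak{h}^-$ with $v_p(Y_{21}) = m \leq k - 1$. A Smith normal form analysis of the $\Z_p$-span of $\log(u^p)$ and $Y$ inside $\mathfrak{h}^-$ then shows that $\mathfrak{h}^-$ contains elements congruent to $pN$ and $p^m N_-$ modulo higher order, with $N_- = \matr 0010$. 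The Lie bracket $[pN, p^m N_-] = p^{m+1} D_\pm$ (with $D_\pm = \matr 100{-1}$) produces a pure-$D_\pm$ element in $\mathfrak{h}^+$, while the $\mathrm{Ad}(u)$-invariance of $\mathfrak{h}$, combined with the identity $\mathrm{Ad}(u_0)(N_-) = D_\pm + N_- - N$, gives a refined element of the form $p^m D_\pm + O(p^{m+1}) \in \mathfrak{h}$.

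Finally, assumption (3) forces the trace map $\mathfrak{h} \to p\Z_p$ to have image $p\Z_p$ or $p^2\Z_p$, so $\mathfrak{h}^+$ contains a diagonal element $s\Id + tD_\pm$ with $v_p(s) \leq 2$ (for instance, the $+$-projection of $\log(u \cdot CuC^{-1})$, whose leading term is $p(a+d-c)\Id + p(a-d)D_\pm$ when $u = u_0 + p\matr abcd$). Combining this trace-source element with the $D_\pm$-elements from the previous step via explicit $\Z_p$-linear combinations that eliminate the $D_\pm$-component produces a scalar $\alpha\Id \in \mathfrak{h}^+$ with $v_p(\alpha) \leq k$; exponentiating yields $(1+p^k)\Id \in H$, and closedness of $H \cap (\Z_p^\times\cdot\Id)$ combined with Lemma \ref{lemma:ScalarsValuationOne} gives $(1+p^k\Z_p)\Id \subseteq H$. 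The hard part will be the final bookkeeping in the tightest sub-case (trace image $p^2\Z_p$ together with $m = k - 1$): here the element from $\mathrm{Ad}(u)$-invariance, rather than only the bracket element, is essential to keep the scalar's valuation at $k$ and not $k+1$.
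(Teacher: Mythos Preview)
Your approach via the $p$-adic Lie algebra $\mathfrak{h}=\log(H^{(1)})$ and its $C$-eigenspace decomposition $\mathfrak{h}=\mathfrak{h}^+\oplus\mathfrak{h}^-$ is natural but genuinely different from the paper's proof, and it has real gaps. The paper argues by induction on $n$ that $(1+p^k)\Id\in H_{p^n}$, using no BCH or logarithm. Its engine is the iteration $M\mapsto M\cdot CMC^{-1}$ (Lemma~\ref{lemma:padicKeyLemma}), which kills the anti-diagonal part while preserving both the determinant and the traceless diagonal part; combined with hypothesis~(3) this forces the diagonal subgroup $\mathcal{D}_{n+1}\subset H_{p^{n+1}}$ to have order $\geq p^n$. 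A dichotomy then splits on the size of the ``infinitesimal diagonal'' $\mathfrak{d}_n$ of Definition~\ref{def:LieAlgebra}: if $\#\mathfrak{d}_n=p^2$ the scalar lifts for free; otherwise $\det:\mathcal{D}_{n+1}\to\Lambda_{n+1}$ is an isomorphism, every determinant-$1$ element of $H_{p^{n+1}}$ has the form $\lambda\Id+A$ with $A$ anti-diagonal, and conjugating such elements by $\mathcal{D}_{n+1}$ yields the congruence~\eqref{eq:AlmostScalarsp}, which together with hypothesis~(2) forces the parametrisation $x\mapsto\operatorname{diag}(x^a,x^b)$ of $\mathcal{D}_{n+1}$ to satisfy $x^a\equiv x^b\pmod{p^{n+2-k}}$, hence to be nearly scalar.

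Several steps in your sketch are not justified. First, $\log(u^p)\in\mathfrak{h}$, not $\mathfrak{h}^-$; you must project, and the projection still has an uncontrolled $(2,1)$-entry. Second, hypothesis~(2) only furnishes $h\in H$ with $v_p(h_{21})\leq k-1$; to get into $H^{(1)}$ you replace $h$ by $u^{-j}h$ or $h^p$, but either operation can raise $v_p$ of the $(2,1)$-entry (by cancellation, or by~$1$ respectively), so you only secure $m\leq k$, not $m\leq k-1$. Third, and most seriously, the final elimination is not carried out: from $s\Id+tD_\pm$ with $v_p(s)\leq 2$ and $\alpha\Id+\beta D_\pm$ with $v_p(\beta)=m$, the scalar you produce is $(\beta s-t\alpha)\Id$, whose valuation depends on $v_p(t)$ and $v_p(\alpha)$, over which you have no control; when the two elements are nearly proportional this can vanish or have valuation $>k$. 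The paper's own remark immediately following the statement already warns that Lie-theoretic methods face difficulties here precisely because $H$ need not be open in $\GL_2(\Z_p)$, so $\mathfrak{h}$ may have $\Z_p$-rank $<4$ and the expected index bounds do not apply. Your outline may well be completable, but the ``tightest sub-case'' you flag is exactly where the content lies, and nothing in the proposal resolves it; the paper's finite-level argument sidesteps this by never needing $\mathfrak{h}^+$ to have full rank.
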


\begin{remark}
From a group-theoretic point of view this result is optimal, at least in the case $p=3, k=3$ that we are interested in. The subgroup $H$ of $\GL_2(\Z_3)$ given by the inverse image of the subgroup of $\GL_2(\Z/3^3\Z)$ generated by the matrices
\[
\matr{10}00{16}, \quad \matr{10}{9}{23}{10}
\]
satisfies all the properties (1)-(4) in the statement, and
\[
H \cap \Z_3^\times = \{\lambda \in \Z_3^\times : \lambda \equiv 1 \pmod{3^3}\}.
\]
\end{remark}
\begin{remark}
We also note that the methods of \cite{MR1241950} and \cite[§4]{MR3437765} are not easily applicable here, since there is no reason to expect a group $H$ as in the statement of Proposition \ref{prop:padicscalars} to be open in $\GL_2(\Z_p)$. This implies that the $\mathbb{Z}_p$-integral Lie algebra $L$ attached to $H$ by \cite{MR1241950} could be quite small, with $L/[L,L]$ infinite, which makes it hard to extract useful information from the main theorem of \cite{MR1241950}.
\end{remark}

The proof of the proposition is by induction: we will show that, for every $n \geq k$, the group $H_{p^n}$ contains all scalars congruent to 1 modulo $p^k$. Since $H$ is closed this gives the desired conclusion.

\begin{remark}\label{rmk:MatrixD}
The group $H_p$ is cyclic, generated by any element $g$ of order $p$. The condition that $H$ be stable under conjugation by $C$ implies easily that $g$ is either upper- or lower-unitriangular (that is, triangular with diagonal coefficients equal to 1). This shows in particular that for every $h = \matr abcd \in H$ we have $a \equiv d \equiv 1 \pmod p$, so that the diagonal entries of $h-\Id$ are divisible by $p$. Any $h \in H$ may therefore be written as $h = \lambda \Id + D + A$, where $\lambda=\frac{1}{2} \operatorname{tr}(h) \equiv 1 \pmod{p}$, $D$ is diagonal, $\operatorname{tr}(D)=0$, $D \equiv 0 \pmod p$, and $A$ is anti-diagonal. This decomposition will play an important role in the proof.
\end{remark}

The following lemma will be key in our approach.
\begin{lemma}\label{lemma:padicKeyLemma}
Let $p$ be an odd prime, let $H_{p^n}$ be a $p$-subgroup of $\GL_2(\Z/p^n\Z)$ stable under conjugation by $C:=\matr100{-1}$, and let $M$ be an element of $H_{p^n}$. Consider the sequence of elements of $H_{p^n}$ defined by $M_0=M$ and $M_{i+1} = M_i \cdot C M_i C^{-1}$. Then:
\begin{enumerate}
\item for every $i \geq 0$, the elements $\det M_i$ and $\det M$ generate the same subgroup of $(\Z/p^n \Z)^\times$;
\item write each $M_i$ as $\lambda_i \Id + D_i + A_i$, where $D_i$ is diagonal and has trace $0$ and $A_i$ is anti-diagonal. Then there exists a scalar $\mu_i \in (\Z/p^n\Z)^\times$ such that $D_i=\mu_i D_0$;
\item the matrix $M_i$ is diagonal for all $i \geq n$.
\end{enumerate}
\end{lemma}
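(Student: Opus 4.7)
The plan is to parameterise each $M_i$ by its decomposition under the $C$-action: write $M_i = \lambda_i \Id + D_i + A_i$, where $D_i = \matr{d_i}{0}{0}{-d_i}$ is traceless diagonal and $A_i = \matr{0}{b_i}{c_i}{0}$ is antidiagonal. Since conjugation by $C$ fixes $\lambda_i\Id + D_i$ and negates $A_i$, a direct calculation using $D_i^2 = d_i^2 \Id$, $A_i^2 = b_i c_i \Id$, and $D_iA_i + A_iD_i = 0$ yields
\begin{align*}
\lambda_{i+1} &= \lambda_i^2 + d_i^2 - b_i c_i,\\
D_{i+1} &= 2\lambda_i D_i,\\
A_{i+1} &= 2 d_i \matr{0}{-b_i}{c_i}{0}.
\end{align*}
Note that since $H_{p^n}$ is a subgroup stable under $C$-conjugation, all the $M_i$ lie in $H_{p^n}$.

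Parts (1) and (2) of the lemma are essentially formal consequences of these formulas. For (1), multiplicativity of the determinant gives $\det M_{i+1} = (\det M_i)^2$, so $\det M_i = (\det M_0)^{2^i}$; since $H_{p^n}$ is a $p$-group with $p$ odd, the element $\det M_0$ has order coprime to $2^i$, so $(\det M_0)^{2^i}$ generates the same cyclic subgroup of $(\Z/p^n\Z)^\times$ as $\det M_0$. For (2), the recursion $D_{i+1} = 2\lambda_i D_i$ immediately gives, by induction, $D_i = \mu_i D_0$ with $\mu_i := \prod_{j=0}^{i-1} 2\lambda_j$.

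The main obstacle is part (3), whose crux is showing that $d_0 \equiv 0 \pmod p$; the rest then follows easily. Here I would use in an essential way that $H_{p^n}$ is a $p$-group: every element reduces modulo $p$ to an element of $p$-power order in $\GL_2(\F_p)$, and since such elements have order $1$ or $p$, they are unipotent. Applied to $M_0$, this yields $\lambda_0 \equiv 1 \pmod p$ (from the trace condition) and $d_0^2 + b_0 c_0 \equiv 0 \pmod p$ (from the vanishing of $\det(M_0-\Id)$ modulo $p$). Applied to $M_1$, it similarly yields $\lambda_1 \equiv 1 \pmod p$; substituting into the recursion for $\lambda_1$ we obtain
\[
1 \equiv \lambda_1 \equiv \lambda_0^2 + d_0^2 - b_0 c_0 \equiv 1 + 2 d_0^2 \pmod p,
\]
and since $p$ is odd this forces $d_0 \equiv 0 \pmod p$. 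Moreover $\lambda_j \equiv 1 \pmod p$ for every $j$ (same argument applied to each $M_j \in H_{p^n}$), so each $\mu_i$ is a unit modulo $p$, giving $v_p(d_i) \geq 1$ for all $i$. The recursion for $A_{i+1}$ then yields $v_p(A_{i+1}) \geq v_p(A_i) + 1$, hence $v_p(A_i) \geq i$ by induction, so $A_i \equiv 0 \pmod{p^n}$ for every $i \geq n$ and $M_i$ is diagonal.
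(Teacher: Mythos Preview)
Your proof is correct and follows essentially the same route as the paper: decompose each $M_i$ into scalar, traceless diagonal, and antidiagonal parts, derive the same recursions $D_{i+1}=2\lambda_i D_i$ and $A_{i+1}=[A_i,D_i]$, and use $v_p(D_i)\geq 1$ to get $v_p(A_{i+1})\geq v_p(A_i)+1$.

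The one point where you diverge is worth noting. To obtain $D_0\equiv 0\pmod p$, the paper appeals to Remark~\ref{rmk:MatrixD}, which (using the $C$-stability of $H_{p^n}$) forces the mod-$p$ image to be generated by a unitriangular matrix, so that every element of $H_{p^n}$ has both diagonal entries $\equiv 1\pmod p$. You instead argue intrinsically: from unipotence of $\overline{M_0}$ you extract $\lambda_0\equiv 1$ and $d_0^2+b_0c_0\equiv 0$, and from unipotence of $\overline{M_1}$ together with the recursion $\lambda_1=\lambda_0^2+d_0^2-b_0c_0$ you obtain $2d_0^2\equiv 0$. This is a neat self-contained argument that uses only the $p$-group hypothesis (not the $C$-stability) at this step, and it has the pleasant feature of feeding the recursion back into the proof of the key congruence rather than relying on a separate structural classification of $H_p$.
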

\begin{proof}
For the first statement we have $\det(M_{i+1})=\det(M_i)^2$ and the map $x\mapsto x^2$ is an automorphism of the abelian $p$-group $\det(H)$.
Write now $M_i=\lambda_i \Id + D_i + A_i$ as in the statement. It follows from Remark \ref{rmk:MatrixD} that $D_i \equiv 0 \pmod p$. One computes $CM_iC^{-1}=\lambda_i \Id + D_i - A_i$ and therefore
\[
\begin{aligned}
M_{i+1} & = \left( \lambda_i \Id + D_i + A_i \right) \left( \lambda_i \Id + D_i - A_i \right) \\
& = \lambda_i^2 + D_i^2 + 2\lambda_i D_i - A_i^2 + [A_i,D_i].
\end{aligned}
\]
Notice that $D_i^2$ is a multiple of the identity (since the two diagonal elements of $D_i$ are opposite to each other, hence have the same square), and so is $A_i^2$, while $[A_i, D_i]$ is anti-diagonal. Hence
\[
\begin{cases}
D_{i+1} = 2 \lambda_i D_i \\
A_{i+1} = [A_i, D_i],
\end{cases}
\]
which immediately implies the statement about $D_i$ since $(2\lambda_i,p)=1$. Moreover, since $v_p(D_i) \geq 1$ we have $v_p(A_{i+1}) \geq v_p(A_i)+1$: in particular, for $i \geq n$ we have $v_p(A_i) \geq n$, hence for such $i$ the matrix $A_i$ is $0$ and $M_i$ is diagonal.
\end{proof}
We notice in particular the following immediate consequence of the previous lemma:
\begin{corollary}\label{cor:DiagonalSurjectiveDeterminant}
Let $H_{p^n}$ be a $p$-subgroup of $\GL_2(\Z/p^n\Z)$ stable under conjugation by $C$, and let $\mathcal{D}_n$ be the subgroup of diagonal matrices in $H_{p^n}$. Then $\det(H_{p^n})=\det(\mathcal{D}_n)$.
\end{corollary}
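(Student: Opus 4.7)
The proof is essentially immediate from Lemma \ref{lemma:padicKeyLemma}. The plan is to show that for every $M \in H_{p^n}$, the element $\det(M)$ lies in the (cyclic) subgroup of $(\Z/p^n\Z)^\times$ generated by the determinant of some diagonal element of $H_{p^n}$, from which the nontrivial inclusion $\det(H_{p^n}) \subseteq \det(\mathcal{D}_n)$ follows (the reverse inclusion is tautological).

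Concretely, given an arbitrary $M \in H_{p^n}$, I would form the sequence $M_0 := M$ and $M_{i+1} := M_i \cdot C M_i C^{-1}$ from Lemma \ref{lemma:padicKeyLemma}. Since $H_{p^n}$ is closed under multiplication and stable under conjugation by $C$, every $M_i$ lies in $H_{p^n}$. By part (3) of the lemma, once $i \geq n$ the matrix $M_i$ is diagonal, so $M_n \in \mathcal{D}_n$. By part (1) of the lemma, $\det(M_n)$ and $\det(M_0) = \det(M)$ generate the same subgroup of $(\Z/p^n\Z)^\times$; in particular $\det(M) \in \langle \det(M_n) \rangle \subseteq \det(\mathcal{D}_n)$.

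There is no real obstacle here: all the work has been done in Lemma \ref{lemma:padicKeyLemma}. The only point worth flagging is that one must invoke both parts (3) and (1) of that lemma in combination — part (3) to produce a diagonal element of $H_{p^n}$ out of $M$, and part (1) to ensure that the determinant of this diagonal element controls $\det(M)$. Together they give $\det(H_{p^n}) \subseteq \det(\mathcal{D}_n)$, which, combined with the obvious opposite inclusion, yields the claimed equality.
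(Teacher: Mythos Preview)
Your proof is correct and follows essentially the same approach as the paper's: both apply parts (1) and (3) of Lemma \ref{lemma:padicKeyLemma} to produce, from a given $M \in H_{p^n}$, a diagonal element of $H_{p^n}$ whose determinant generates the same subgroup as $\det(M)$. The only cosmetic difference is that the paper first uses the cyclicity of $\det(H_{p^n}) \subseteq (\Z/p^n\Z)^\times$ to reduce to a single $M$, whereas you run the argument for every $M$; either way works.
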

\begin{proof}
The group $\det(H_{p^n})$ is contained in $(\Z/p^n\Z)^\times$, hence is cyclic. Let $M \in H_{p^n}$ be a matrix whose determinant generates $\det(H_{p^n})$: by the previous lemma, we can find a diagonal matrix whose determinant generates the same subgroup as $\det(M)$.
\end{proof}

Before proving Proposition \ref{prop:padicscalars} we need one further definition:
\begin{definition}\label{def:LieAlgebra}
For $n \geq 1$ we let $L_n$ be the image of the map
\[
\begin{array}{ccc}
\ker(H_{p^{n+1}} \to H_{p^n}) & \to & \Mat_{2 \times 2}(\F_p) \\
g & \mapsto & \frac{g-\Id}{p^n}.
\end{array}
\]
\end{definition}
The formulas
\[
(\Id+p^n M_1)(\Id + p^n M_2) \equiv \Id + p^n(M_1+M_2) \pmod{p^{n+1}}
\]
and $(\Id+p^n M)^p \equiv \Id + p^{n+1} M \pmod{p^{n+2}}$, valid for all $n \geq 1$, show that the set $L_n$ is an additive subgroup of $\Mat_{2 \times 2}(\F_p)$, and that moreover $L_n \subseteq L_{n+1}$ for all $n \geq 1$.

We further observe that since $C$ normalises $H$ the subspace $L_n$ of $\Mat_{2 \times 2}(\F_p)$ is stable under conjugation by $C$. Since $p$ is odd, the conjugation action of $C$ on $\Mat_{2 \times 2}(\F_p)$ decomposes it as the direct sum of the subspaces of diagonal and anti-diagonal matrices. We then have a corresponding decomposition $L_n = \mathfrak{d}_n \oplus \mathfrak{a}_n$, where $\mathfrak{d}_n$ (respectively $\mathfrak{a}_n$) is the subspace of diagonal (resp.~anti-diagonal) matrices in $L_n$.
We are now ready to begin the proof proper.

\begin{proof}[Proof of Proposition \ref{prop:padicscalars}]
We show by induction that $H_{p^n}$ contains all scalar matrices congruent to $1$ modulo $p^k$. Notice that the claim is trivial for $n \leq k$, so we only need to take care of the inductive step. For each positive integer $n$ we denote by $\mathcal{D}_{n}$ the subgroup of diagonal matrices in $H_{p^n}$ and by $\Lambda_n$ the subgroup $\{ \lambda \in (\Z/p^n\Z)^\times : \lambda \equiv 1 \pmod p \}$ of $(\Z/p^n\Z)^\times$. By Corollary \ref{cor:DiagonalSurjectiveDeterminant} and the hypothesis $\det(H) = 1+p\Z_p$ (hence $\det(H_{p^n}) = \Lambda_n$) we have $\# \mathcal{D}_n \geq \#\Lambda_n = p^{n-1}$ for all $n \geq 1$.
The kernel of the reduction map $\mathcal{D}_{n+1} \to \mathcal{D}_n$ is isomorphic to $\mathfrak{d}_n$ by construction. Notice that $\#\mathfrak{d}_n \in \{1,p,p^2\}$.

If $\#\mathfrak{d}_n = p^2$, the map $\mathcal{D}_{n+1} \to \mathcal{D}_{n}$ is $p^2$-to-$1$, which implies that, for every element in $\mathcal{D}_n$, \textit{all} its $p^2$ diagonal lifts to $\GL_2(\Z/p^{n+1}\Z)$ are in $\mathcal{D}_{n+1}$. In particular, since $(1+p^k)\Id \bmod \, p^n$ is an element of $\mathcal{D}_n$ by the inductive hypothesis and $(1+p^k)\Id \bmod \, p^{n+1}$ is one such possible lift, we obtain immediately that $(1+p^k)\Id$ is in $H_{p^{n+1}}$, and the induction step is complete (notice that the cyclic subgroup generated by $(1+p^k)\Id$ contains all scalars congruent to $1$ modulo $p^k$).

Suppose on the other hand that $\# \mathfrak{d}_n \mid p$. Then, using the fact that $\#\mathfrak{d}_i \mid \#\mathfrak{d}_{i+1}$, we obtain immediately
\[
\# \mathcal{D}_{n+1} = \#\mathcal{D}_1 \cdot \# \mathfrak{d}_1 \cdots \# \mathfrak{d}_n \mid p^n,
\]
which combined with our previous observation $\#\mathcal{D}_{n+1} \geq p^n$ implies $\#\mathcal{D}_{n+1}=p^n$. In particular,
\[
\det : \mathcal{D}_{n+1} \to \Lambda_{n+1}
\]
is a surjective group homomorphism between groups of the same order, hence is an isomorphism. This also implies that the only diagonal matrix in $H_{p^{n+1}}$ with determinant 1 is the identity.

Let now $d : \Lambda_{n+1} \to \mathcal{D}_{n+1}$ be the isomorphism given by the inverse of the determinant, which we write as
\[
d(x) = \begin{pmatrix}
\alpha(x) & 0 \\
0 & \beta(x)
\end{pmatrix}
\]
for suitable group homomorphisms $\alpha(x), \beta(x) : \Lambda_{n+1} \to \Lambda_{n+1}$. As $\Lambda_{n+1}$ is a cyclic group, we have $\alpha(x)=x^{a}$ and $\beta(x)=x^b$ for suitable integers $a, b$. Since $d(x)$ is inverse to the determinant, we have $x = \det(d(x))= \alpha(x)\beta(x) = x^{a+b}$, so that in particular $a+b$ is relatively prime to $p$. This implies that at least one between $a$ and $b$ is prime to $p$.

We now show that the intersection $S_{n+1} := H_{p^{n+1}} \cap \SL_2(\Z/p^{n+1}\Z)$ consists of matrices of the form $\lambda \operatorname{Id} + A$, where $\lambda \in \Z/p^{n+1}\Z$ is a scalar and $A$ is antidiagonal.
To see this, let $M \in S_{n+1}$, and write it as $M=\lambda \Id + D + A$, with $D$ diagonal of trace 0 and $A$ antidiagonal. Lemma \ref{lemma:padicKeyLemma} yields a diagonal matrix $M' = \lambda' \Id + D'$ in $S_{n+1}$ (in particular, $\det(M')=1$) with $D' = \mu D$ for some scalar $\mu$ prime to $p$. Since the only diagonal matrix with determinant 1 in $H_{p^{n+1}}$ is the identity, we get $\lambda' = 1$ and $D'=0$. As $\mu$ is invertible, this implies $D=0$ as desired.

On the other hand, $S_{n+1}$ -- being the kernel of the determinant -- is normal in $H_{p^{n+1}}$, hence in particular is stable under conjugation by the diagonal matrices $d(x)$. Let $M = \lambda \Id + A$ be any element of $S_{n+1}$ and let $x \in \Lambda_{n+1}$. Then $S_{n+1}$ also contains $d(x) \cdot M \cdot d(x)^{-1}$ and their product $M \cdot d(x) \cdot M \cdot d(x)^{-1}$, that is,
\begin{equation}\label{eq:ConjugationActionOnDerivedSubgroup}
(\lambda \Id +A)(\lambda \Id  + d(x) \cdot A \cdot d(x)^{-1}).
\end{equation}
Like all elements of $S_{n+1}$, this matrix has the form $\lambda' \Id + A'$ for some scalar $\lambda'$ and some anti-diagonal matrix $A'$. The diagonal part of \eqref{eq:ConjugationActionOnDerivedSubgroup} is $\lambda^2 + A \cdot d(x) \cdot A \cdot d(x)^{-1}$, so $A \cdot d(x) \cdot A \cdot d(x)^{-1}$ is a multiple of the identity modulo $p^{n+1}$. Writing $A=\begin{pmatrix}
0 & y \\ z & 0 
\end{pmatrix}$, the condition becomes
\begin{equation} \label{eq:AlmostScalarsp}
yz \left( \frac{\alpha(x)}{\beta(x)} - \frac{\beta(x)}{\alpha(x)} \right) \equiv 0 \pmod{p^{n+1}}.
\end{equation}
We will show below that there exists $M \in S_{n+1}$, $M= \lambda \Id + \matr 0yz0$, with $v_p(yz) \leq k-1$. Assuming for now that we have such an $M$, in Equation \eqref{eq:AlmostScalarsp} we may assume $v_p(yz) \leq k-1$, hence we obtain $\left( \frac{\alpha(x)}{\beta(x)} \right)^2 \equiv 1 \pmod{p^{n+2-k}}$. Recalling that $\alpha(x)=x^a, \beta(x)=x^b$, this rewrites as $x^a \equiv x^b \pmod{p^{n+2-k}}$ (notice that $x \mapsto x^2$ is an automorphism of $\Lambda_{n+1}$). Raising to the $p^{k-1}$-th power we get $x^{p^{k-1} a} \equiv x^{p^{k-1} b} \pmod{p^{n+1}}$, hence
\[
x^{p^{k-1} a} \operatorname{Id} = x^{p^{k-1} b} \operatorname{Id} = d \left( x^{p^{k-1}} \right) \in H_{p^{n+1}}
\]
for every $x \in \Lambda_{n+1}$. 
Recall now that at least one between $a$ and $b$ is prime to $p$, say $(a,p)=1$: then $x \mapsto x^a$ is an automorphism of $\Lambda_{n+1}$, so it follows that all the $p^{k-1}$-th powers of the scalars $\equiv 1 \pmod{p}$ are in $H_{p^{n+1}}$. The induction step is now complete, because all scalars congruent to $1$ modulo $p^k$ are $p^{k-1}$-th powers in $\Lambda_{n+1}$.

It only remains to show that we can find an element $M \in S_{n+1}$ such that, writing $M= \lambda \Id + \matr 0yz0$, we have $v_p(yz) \leq k-1$. We first prove that it is enough to find $N=\begin{pmatrix}
n_{11} & n_{12} \\
n_{21} & n_{22}
\end{pmatrix} \in H_{p^{n+1}}$ with $v_p(n_{12}n_{21}) \leq k-1$. Indeed, given such an $N$, we know from above that there is a diagonal matrix $Q=\begin{pmatrix}
q_{11} & 0 \\ 0 & q_{22}
\end{pmatrix} \in H_{p^{n+1}}$ with $\det(Q)=\det(N)^{-1}$. Notice that $q_{11}, q_{22}$ are invertible. Then $NQ=\begin{pmatrix}
q_{11} n_{11} & q_{22} n_{12} \\
q_{11} n_{21} & q_{22} n_{22}
\end{pmatrix}$ belongs to $S_{n+1}$, so it is automatically of the form $\lambda \Id+A$, and its anti-diagonal part satisfies $v_p(q_{22}n_{12} \, q_{11}n_{21}) = v_p(n_{12}n_{21}) \leq k-1$ as desired. Thus it suffices to find $N \in H_{p^{n+1}}$, of arbitrary determinant, with $v_p(n_{12}n_{21}) \leq k-1$.

By Remark \ref{rmk:MatrixD}, there exists $g \in H$ that reduces modulo $p$ to $\matr 1101$ or $\matr 1011$: for simplicity of exposition, we only discuss the former case, the latter being completely analogous. Consider the image $\begin{pmatrix} g_{11} & g_{12} \\ g_{21} & g_{22} \end{pmatrix}$ of $g$ in $H_{p^k}$: since $v_p(g_{12})=0$, if $v_p(g_{21}) \leq k-1$ we are done by taking $N=g \bmod {p^{n+1}}$. Otherwise, let $h \in H$ be an element whose image $\begin{pmatrix} h_{11} & h_{12} \\ h_{21} & h_{22} \end{pmatrix}$ in $H_{p^k}$ satisfies $v_p(h_{21}) \leq k-1$: such an element exists, for otherwise $H_{p^k}$ would be contained in the subgroup of upper-triangular matrices.
If $v_p(h_{12})=0$ we are done by taking $N=h \bmod{p^{n+1}}$, while if $v_p(h_{12})>0$ it is easy to check that we can take $N=hg \bmod{p^{n+1}}$.
\end{proof}
\begin{remark}
Part of the proof is inspired by the structure theorem for reductive groups. Indeed, in the course of the argument we prove that the diagonal torus of $H_{p^{n+1}}$ is isomorphic to $\Lambda_{n+1}$, which is the pro-$p$ subgroup of $\mathbb{G}_m(\Z/p^{n+1}\Z)$, that $S_{n+1} = H_{p^{n+1}} \cap \SL_2(\Z/p^{n+1}\Z)$ (morally, the derived subgroup) intersects the diagonal torus trivially, and finally that the conjugation action of the torus on the ``semisimple part'' $S_{n+1}$ is (essentially) trivial, so that the diagonal torus (essentially) consists of scalar matrices. This is reminiscent of the decomposition $G = Z(G).G'$ that holds for reductive groups, and indeed hypothesis (2) of the proposition may be seen as a discrete analogue of the statement ``$H$ is reductive".
\end{remark}

\bibliographystyle{acm}
\bibliography{biblio}

\end{document}